\documentclass[12pt,twoside]{amsart}
\usepackage{amssymb,amscd}
\usepackage{mathrsfs}
\usepackage{array,float}
\usepackage[all]{xy}
\usepackage{enumerate}

\addtolength{\textwidth}{2cm}
\addtolength{\textheight}{2cm}
\addtolength{\voffset}{-1cm}
\addtolength{\hoffset}{-1cm}

%
\setlength{\extrarowheight}{0.05cm}

\linespread{1.1}

\theoremstyle{plain}
\newtheorem{thm}{Theorem}[section]
\newtheorem{lem}[thm]{Lemma}
\newtheorem{pro}[thm]{Proposition}
\newtheorem{cor}[thm]{Corollary}

\theoremstyle{remark}
\newtheorem{rem}[thm]{Remark}

\theoremstyle{definition} 
\newtheorem{definition}[thm]{Definition}

\numberwithin{equation}{section}

\newcommand{\Z}{\mathbb{Z}}   

\newcommand{\N}{\mathbb{N}}

\newcommand{\ackn}{  \noindent{\sc Acknowledgement }\hspace{5pt} }

\renewcommand{\phi}{\varphi}


\begin{document}

\author{Ilir Snopce}
\address{Universidade Federal do Rio de Janeiro\\
  Instituto de Matem\'atica \\
  21941-909 Rio de Janeiro, RJ \\ Brasil }
\email{ilir@im.ufrj.br}

\thanks{This research was partially supported by CNPq}

\title[Test elements in pro-$p$ groups with applications in discrete groups]{Test elements in pro-$p$ groups with applications in discrete groups}

\author{Slobodan Tanushevski} \address{Universidade Federal do Rio de Janeiro\\
  Instituto de Matem\'atica \\
  21941-909 Rio de Janeiro, RJ \\ Brasil }

\email{tanusevski1@gmail.com}

\begin{abstract}
Let $G$ be a group. An element $g \in G$ is called a test element of $G$ if for every endomorphism $\varphi:G \to G$, $\varphi(g)=g$ implies that $\varphi$ is an automorphism.
We prove that for a finitely generated profinite group $G$, $g \in G$ is a test element of $G$ if and only if it is not contained in a proper retract of $G$.
Using this result we prove that an endomorphism of a free pro-$p$ group of finite rank which preserves an automorphic orbit of a non-trivial element must be an automorphism.
We give numerous explicit examples of test elements in free pro-$p$ groups and Demushkin groups. By relating test elements in finitely generated residually finite-$p$ Turner groups to test elements in their pro-$p$
completions, we provide new examples of test elements in free discrete groups and surface groups. Moreover, we prove that the set of test elements of a free discrete group of finite rank is dense in the profinite topology.
\end{abstract}

\subjclass[2010]{20E18, 20E05, 20E36}

\maketitle

\section{Introduction}
\label{introduction}
A \emph{test element} of a group $G$ is an element $g \in G$ with the following property: if $\varphi(g)=g$ for an endomorphism  $\varphi: G \to G$ ,  then $\varphi$ must be an automorphism. 
Alternatively, $g \in G$ is a test element of $G$ if for every $\phi \in End(G)$,  $\varphi (g) = \alpha(g)$ for some $\alpha \in \textrm{Aut}(G)$ implies $\varphi \in \textrm{Aut}(G)$.  
The notion of a test element was introduced by Shpilrain \cite{Shpilrain} as a way of organizing into a general framework a plethora of hitherto scattered results.

Let $D(x_1, \ldots, x_n)$ be a free discrete group with basis $\{x_1, \ldots, x_n \}$. In 1918, Nielsen \cite{Nielsen} proved that the commutator $[x_1, x_2]$ is a test element of the free group $D(x_1, x_2)$ of rank two. This result was generalized by Zieschang \cite{Zieschang1}, who showed that $[x_1, x_2][x_3,x_4]\cdots [x_{2m-1}, x_{2m}]$ is a test element of $D( x_1, x_2, ..., x_{2m})$. 
Rips \cite{Rips} achieved  another generalization by proving that every higher commutator $[x_1, x_2, ... , x_n]$ is a test element of $D(x_1, ..., x_n)$. 
In another direction, $x_1^{k_1}x_2^{k_2}\cdots x_n^{k_n}$  is a test element of $D(x_1, ..., x_n)$ if and only if $k_i \neq 0$ for all $1 \leq i \leq n$ and $\gcd(k_1, \ldots, k_n) \neq 1$.
This was proved by Zieschang \cite{Zieschang2} for  the special case $k_1=k_2= \ldots = k_n \geq 2$; the general result is due to  Turner \cite{Turner}.

We say that a group $G$ is a \emph{Turner group} if it satisfies the \emph{Retract Theorem}: an element $g\in G$ is a test element of $G$ if and only if it is not contained in any proper retract of $G$. 
Free groups of finite rank were the first examples of  Turner groups  (see \cite{Turner}).
Other examples are torsion free stably hyperbolic groups and finitely generated Fuchsian groups, as it was shown by O'Neill and Turner \cite{O'Neill}. Recently Groves \cite{Groves} proved that all torsion free hyperbolic groups are in fact Turner groups. 
The fundamental group of the Klein bottle $\langle a, b \mid aba^{-1}=b^{-1} \rangle $ is an example of a group which is not  a Turner group (see \cite{Turner}). However, all other surface groups are Turner groups.

\smallskip

In this paper we study test elements in the category of pro-$p$ groups, with emphasis on free pro-$p$ groups of finite rank and Demushkin groups. 
Furthermore, by relating test elements in finitely generated residually finite-$p$ Turner groups to test elements in their pro-$p$ completions, we obtain new results for test elements in free discrete groups and surface groups.
The starting point in our investigation is Theorem~\ref{test-retract}, which states that  every finitely generated profinite group is a Turner group.  
An immediate interesting consequence is that every element of infinite order in a finitely generated just infinite profinite group is a test element. In particular, all non-trivial elements in a torsion free just infinite pro-$p$ group are test elements.
This is in contrast to the case of non-procyclic free abelian pro-$p$ groups, which do not have any test elements.
\smallskip

We write $\textrm{Aut}(G)$ for the group of (continuous) automorphisms of a profinite group $G$. The \emph{automorphic orbit} of an element $g \in G$ is defined by
\[\textrm{Orb}(g) = \{\alpha(g) \mid \alpha \in \textrm{Aut}(G) \}.\]

In Corollary~\ref{orbit-free}, we prove that if $\varphi$ is an endomorphism of a free pro-$p$ group $F$ of finite rank such that $\varphi(\textrm{Orb}(u)) \subseteq \textrm{Orb}(u)$ for some $1 \neq u \in F$, then $\varphi$ is an automorphism. 
Moreover, this result holds for every finitely generated pro-$p$ group $G$ with the property that $\textrm{Aut}(G)$ acts transitively on $G/ {\Phi(G)}$ where $\Phi(G)$ denotes the Frattini subgroup of $G$ (Theorem \ref{orbit}).
The counterpart of Corollary~\ref{orbit-free} for free discrete groups was proved by Lee \cite{Lee2} (see also \cite{Lee1}). While Lee uses combinatorial techniques which are not available for free pro-$p$ groups, our proof is based on the Retract Theorem and uses 
``Frattini arguments".
\smallskip

The theory of free pro-$p$ groups although deprived of geometric techniques has one advantage vis-\'{a}-vis the discrete theory which is of crucial importance for our investigation, namely, every retract of a free pro-$p$ group is a free factor (see Corollary~\ref{test-free}).
This observation together with the basic fact that a subgroup $H$ of a free pro-$p$ group $F$ is a free factor of $F$ if and only if $\Phi(F) \cap H=\Phi(H)$ is already sufficient to establish numerous results on test elements in free pro-$p$ groups (see also Proposition~\ref{freefactors-intersection}). 

An \emph{almost primitive element} of a free pro-$p$ (or discrete) group $F$ is a non-primitive element $w \in F$  that 
is primitive in every proper subgroup of $F$ that contains it.  Another major impetus in our study of test elements of free pro-$p$ groups is the observation
that every almost primitive element of a free pro-$p$ group of finite rank is a test element (Proposition~\ref{almost primitive is test}). 
This is not true for free discrete groups; for example, $x_1[x_2, x_1]$ is an almost primitive element of $D(x_1, x_2)$ but it is not a test element.
Besides providing an important source of explicit examples of test elements in free pro-$p$ groups (see for example Proposition~\ref{almost_primitive - examples}), the notion of an almost primitive element is instrumental in the proof of the following result: the set of test elements in a non-abelian free pro-$p$ group $F$  is a dense subset of the Frattini subgroup of $F$ (Proposition~\ref{Frattini dense}).  

Sonn \cite{Sonn} proved that if $G$ is a Demushkin group with minimal generating set of size $n$ and $F$ is a free homomorphic image of $G$, then $F$ is of rank at most $\frac{n}{2}$. 
It follows that every retract of a Demushkin group is free pro-$p$ of rank at most $\frac{n}{2}$ (see Lemma~\ref{retract-Demushkin}). By combining this fact with our results on test elements in free pro-$p$ groups, we give many examples of test elements in Demushkin groups (Theorem~\ref{Demushkin-test} and Theorem~\ref{Demushkin-test-2}).
\smallskip

The connection between test elements in pro-$p$ groups and test elements in discrete groups is established in Proposition~\ref{pro-p - discrete}, which states that for a finitely generated residually finite-$p$ Turner group $G$, if $w \in G$ is a test element in some pro-$p$ completion of $G$, then it is a test element of $G$. We apply Proposition~\ref{pro-p - discrete} to our examples of test elements of free pro-$p$ groups (represented by words of finite length) and thus obtain many new examples of test elements of free discrete groups (see for example Proposition~\ref{examples - discrete}). Also, in this way we obtain all test elements in free discrete groups that we could find in the literature (including all of the examples mentioned above).  
Furthermore, we prove that the set of test elements in a free discrete group of finite rank is dense in the profinite topology (Theorem~\ref{dense - discrete}).

Let $G(n)=\langle x_1, \ldots, x_{2n} \mid [x_1, x_2] \ldots [x_{2n-1}, x_{2n}] \rangle$ be an orientable surface group of genus $n$.
Konieczny, Rosenberger and Wolny \cite{Konieczny} proved that for every prime $p$ and $n \geq 2$, $x_1^px_2^p \ldots x_{2n}^p$  is a test element of $G(n)$.
More generally, O'Neill and Turner \cite{O'Neill} proved that $x_1^kx_2^k \ldots x_{2n}^k$  is a test element of $G(n)$ for every $k \geq 2$ and $n \geq 2$.
To the best of our knowledge, these are the only known examples of test elements in surface groups. 
Since a pro-$p$ completion of an orientable surface group is a Demushkin group, using Proposition~\ref{pro-p - discrete} and our results on test elements of Demushkin groups,  we obtain many new examples of test elements in orientable surface groups
(Proposition~\ref{surface - test} and Proposition~\ref{surface - test - 2}).  Moreover,  if $G$ is a non-orientable surface group of genus $n \geq 3$, then $G$ is residually finite-$p$  Turner group, and for $p \geq 3$, the pro-$p$ completion of $G$ is a free pro-$p$ group
of rank $n-1$. From this observation we derive multitude of examples of test elements of non-orientable surface groups (Proposition~\ref{non-orientable surface groups}).

\subsection{Notation}
Throughout the paper, $p$ denotes a prime. The $p$-adic integers are
denoted by $\Z_p$.
Subgroup $H$ of a profinite group $G$ is tacitly taken to be
closed and by generators we mean topological generators as
appropriate. If $G$ is a profinite group and $g_1, \ldots, g_n \in G$, then $\langle g_1, \ldots, g_n \rangle$ is the closed subgroup of $G$ topologically generated by $g_1, \ldots, g_n$.  
The minimal number of generators of a profinite group $G$ is denoted by $d(G)$. Homomorphisms between profinite groups are always assumed to be continuous.

We write $\Phi(G)$ for the Frattini subgroup of a pro-$p$ group $G$. 
We use the notation $G=H \coprod K$ to express that the pro-$p$ group $G$ is a free product of the groups $H$ and $K$; 
$H \leq_{\textrm{ff}} G$ means that $H$ is a free factor of $G$.  
\section{Subgroup properties}

In this section we establish some preliminary results which extend to the categories of profinite and pro-$p$ groups some well-known properties of free discrete groups.

A family of sets $\{X_i \mid i \in I\}$ is  \emph{filtered from below} if for all $i_1, i_2 \in I$, there is $j \in I$ for which $X_j \subseteq X_{i_1} \cap X_{i_2}$.

\begin{pro}\label{descendin-chain-rank}
Let $G$ be a profinite group and $\mathcal{H}=\{H_i \mid i \in I\}$ be a family of subgroups of $G$ filtered from below. 
Let $H=\bigcap_{i \in I}H_i$.
\begin{itemize}
\item [(a)] If there exists a natural number $r$ such that $d(H_i) \leq r$ for all $i \in I$, then $d(H) \leq r$. Moreover, if  $G$ is a pro-$p$ group and $(\mathcal{H}, \subseteq)$ does not contain a smallest element, then $d(H) < r$.  
\item [(b)] If $G$ is a free pro-$p$ group and $K$ is a finitely generated free factor of $H$, then there exists $i_0 \in I$  with the property that $K$ is a free factor of $H_i$ for all $H_i \leq H_{i_0}$.
\end{itemize} 
\end{pro}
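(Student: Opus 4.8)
The plan is to reduce all three assertions to one combinatorial fact about how intersections interact with open subgroups, and then to read off statements about numbers of generators and about free factors from continuous homomorphisms to $\F_p$ (equivalently, from Frattini quotients). The fact I would establish first is that intersection commutes with an open subgroup along a filtered family: if $N\trianglelefteq G$ is open, then $HN=\bigcap_{i\in I}H_iN$. The inclusion $\subseteq$ is trivial; for $\supseteq$ I would fix $g\in\bigcap_i H_iN$, note that each $C_i:=gN\cap H_i$ is nonempty and closed and that $\{C_i\}$ is again filtered from below, and invoke compactness of $G$ to get $\emptyset\neq\bigcap_i C_i=gN\cap H$, i.e. $g\in HN$. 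Since $G/N$ is finite, the images $H_iN/N$ take finitely many values and, being filtered from below, have a smallest element, which by the fact equals $HN/N$; so there is $i_0\in I$ with $H_{i_0}N/N=HN/N$.

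For the first assertion of (a) I would use the standard fact that a profinite group is $r$-generated as soon as all of its finite continuous quotients are. Let $H/M$ be such a quotient, $M\trianglelefteq H$ open; choose $N\trianglelefteq G$ open with $H\cap N\le M$ and pick $i_0$ with $H_{i_0}N/N=HN/N$. Then $H/M$ is a quotient of $H/(H\cap N)\cong HN/N=H_{i_0}N/N$, which is itself a quotient of $H_{i_0}$; hence $d(H/M)\le d(H_{i_0})\le r$. As every finite quotient of $H$ is thus $r$-generated, $d(H)\le r$.

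The heart of the matter, and the step I expect to be the main obstacle, is the strict inequality in (a), for which I would prove an extension lemma: when $G$ is pro-$p$, every continuous $\psi\colon H\to\F_p$ extends to some $H_{i_0}$. Writing $M=\ker\psi$ (open of index $p$) and choosing $N\trianglelefteq G$ open with $H\cap N\le M$, I would take $i_0$ with $H_{i_0}N/N=HN/N$ and set $M_{i_0}:=M\,(H_{i_0}\cap N)$; a direct check gives that $M_{i_0}$ is open of index $p$ in $H_{i_0}$ with $M_{i_0}\cap H=M$, so $H_{i_0}\to H_{i_0}/M_{i_0}\cong\F_p$ restricts on $H$ to a nonzero multiple of $\psi$, which after rescaling extends $\psi$. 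Granting this, suppose $d(H)=r$ and take a basis $\psi_1,\dots,\psi_r$ of $\mathrm{Hom}(H,\F_p)$. Each $\psi_j$ extends to some member, and filteredness lets me find one $H_{i^\ast}$ to which all of them extend, so the restriction map $\mathrm{Hom}(H_{i^\ast},\F_p)\to\mathrm{Hom}(H,\F_p)$ is onto. Its source has dimension $d(H_{i^\ast})\le r$ and its target dimension $r$, so it is an isomorphism; dually the map $H/\Phi(H)\to H_{i^\ast}/\Phi(H_{i^\ast})$ is onto, whence $H\Phi(H_{i^\ast})=H_{i^\ast}$ and thus $H=H_{i^\ast}$ by the Frattini property. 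But then $H=H_{i^\ast}$ is a smallest element of $(\mathcal H,\subseteq)$, contrary to hypothesis; hence $d(H)\le r-1$.

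For (b) I would combine the extension lemma with the free-factor criterion $\Phi(F)\cap K=\Phi(K)$, applied to the free pro-$p$ groups $H_i$. Since $K$ is a free factor of $H$, writing $H=K\amalg L$ lets me extend every $\psi\colon K\to\F_p$ to $H$ (by $\psi$ on $K$ and $0$ on $L$). Feeding these homomorphisms on $H$ into the extension lemma of (a) and using filteredness to pass to a common index, I would obtain $i_0\in I$ to which every member of a basis of $\mathrm{Hom}(K,\F_p)$ extends. For any $H_i\le H_{i_0}$ I restrict those extensions to $H_i$, so every $\psi\colon K\to\F_p$ extends to $H_i$; consequently any $x\in\Phi(H_i)\cap K$ lying outside $\Phi(K)$ would be separated from $0$ by some such $\psi$, yet would be killed by its extension, a contradiction. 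Thus $\Phi(H_i)\cap K=\Phi(K)$, and by the criterion $K$ is a free factor of $H_i$, as required.
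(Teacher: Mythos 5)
Your proof is correct. It runs on the same two engines as the paper's: the compactness identity $HN=\bigcap_{i\in I}H_iN$ for open normal $N$ (which the paper establishes inside part (a) exactly as you do, via the filtered family of nonempty closed sets $gN\cap H_i$), and the detection of Frattini quotients by continuous homomorphisms to $\F_p$; your treatment of the first statement of (a) is essentially the paper's argument, phrased directly rather than by contradiction. Where you genuinely diverge is in (b) and the strict inequality. The paper works directly with Frattini subgroups: it asserts (without proof) the identity $\Phi(H)=\bigcap_{i\in I}\Phi(H_i)$ for filtered families, intersects with $K$, and uses that $\Phi(K)$ is open in the compact group $K$ to produce $i_0$ with $\Phi(H_{i_0})\cap K=\Phi(K)$; the strict inequality in (a) is then a by-product of this argument applied with $K=H$, via the embedding $H/\Phi(H)\hookrightarrow H_{i_0}/\Phi(H_{i_0})$, which is an isomorphism if and only if $H=H_{i_0}$. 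You instead dualize: your extension lemma (every continuous $\psi\colon H\to\F_p$ extends to some $H_{i_0}$, via the explicit index-$p$ subgroup $M(H_{i_0}\cap N)$) is precisely the dual form of the paper's Frattini-intersection identity, and both the strict inequality and (b) then follow by linear algebra on $\mathrm{Hom}(-,\F_p)$, the same Frattini argument, and the same free-factor criterion $\Phi(F)\cap K=\Phi(K)$. Your route buys self-containment: everything is deduced from the single compactness fact, and your extension lemma in effect supplies a proof of the identity $\Phi(H)=\bigcap_{i\in I}\Phi(H_i)$ that the paper leaves unjustified. The paper's route buys brevity, and it exposes the strict inequality in (a) as a formal specialization of the argument for (b), whereas you prove the two statements independently from your lemma.
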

\begin{proof}
$(a)$ Suppose that there is  a natural number $r$ such that $d(H_i) \leq r$ for all $i \in I$. Assume to the contrary that $d(H) > r$. Then there exists an epimorphism $\varphi: H \twoheadrightarrow T$ onto a finite group $T$ that can not be generated by $r$ elements. 
Let $M\trianglelefteq_o G$ with $H\cap M \leq \textrm{ker}(\varphi)$. Using that $\mathcal{H}$ is a family of closed subgroups filtered from below, we get 
\begin{displaymath}
HM =  (\bigcap_{i\in I}H_i) M = \bigcap_{i\in I}{H_iM}.
\end{displaymath}
 Since $HM$ is open in $G$, $HM=H_{i_0}M$ for some $i_0 \in I$.  It follows that the composition of the following homomorphisms 
 \begin{displaymath}
 H_{i_0} \twoheadrightarrow H_{i_0}/{(H_{i_0}\cap M)} \cong {H_{i_0}M}/M \cong {HM}/ M \cong H/{(H\cap M)}  {\mathop{\twoheadrightarrow}^{\mathrm{\bar{\varphi}}}} T
 \end{displaymath}
 where $\bar{\varphi}$ is defined by $\bar{\varphi}(x(H \cap M))=\phi(x)$ is an epimorphism.  This is a contradiction with $d(H_{i_0}) \leq r$. We will return to the second statement in  $(a)$ after we establish $(b)$. \\ 
 $(b)$ Suppose that $G$ is a pro-$p$ group and that $K$ is a finitely generated free factor of $H$. Then $\Phi(H) \cap K = \Phi(K)$. Since $\mathcal{H}$ is filtered from below, we have 
 \begin{displaymath}
 \Phi(H) = \bigcap_{i \in I}\Phi(H_i) \text{ and } \Phi(K)=\Phi(H) \cap K= \bigcap_{i \in I}\Phi(H_i) \cap K.
 \end{displaymath}

As $K$ is finitely generated, it follows that $\Phi(K)$ is open in $K$. Hence, there exists $i_0 \in I$  such that for all $H_i \leq H_{i_0}$,  $\Phi(H_i) \cap K = \Phi(K)$. 
In the case when $G$ is a free pro-$p$ group, it follows that $K$ is a free factor of $H_i$ for all $H_i \leq H_{i_0}$.   
Furthermore, if $G$ is any pro-$p$ group and  there is $r$ for which $d(H_i) \leq r$ for all $i \in I$, then by setting $K=H$ (as we already proved, $H$ is finitely generated in this case), we get $\Phi(H_{i_0}) \cap H = \Phi(H)$.
Hence, there is an embedding $H / \Phi(H) \hookrightarrow H_{i_0}/\Phi(H_{i_0})$, which is an isomorphism if and only if $H=H_{i_0}$. 
This proves the second statement in $(a)$.
\end{proof}

\begin{pro}
\label{freefactors-intersection}
Let $\{ H_i  \}_{i\in I}$ and $\{ K_i  \}_{i\in I}$ be two families of subgroups of a free pro-$p$ group $F$. If $H_i$ is a free factor of $K_i$ for all $i\in I$, then $\bigcap_{i\in I}H_i$ is a free factor of $ \bigcap_{i\in I}K_i$.
\end{pro}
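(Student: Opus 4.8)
The plan is to verify the Frattini criterion recalled in the introduction. Since every closed subgroup of a free pro-$p$ group is again free pro-$p$, the group $K:=\bigcap_{i\in I}K_i$ is free pro-$p$ and $H:=\bigcap_{i\in I}H_i$ is a subgroup of it with $H\leq K$; thus it suffices to prove that $\Phi(K)\cap H=\Phi(H)$. One inclusion is automatic: $H\leq K$ forces $\Phi(H)\subseteq\Phi(K)$ and $\Phi(H)\subseteq H$, so $\Phi(H)\subseteq\Phi(K)\cap H$. The entire content is therefore the reverse inclusion $\Phi(K)\cap H\subseteq\Phi(H)$.

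I would first reduce the arbitrary-family statement to the case of finitely many subgroups. Suppose the finite case is known, so that $H_J:=\bigcap_{i\in J}H_i$ is a free factor of $K_J:=\bigcap_{i\in J}K_i$ for every finite $J\subseteq I$. For such $J$ we have $K\leq K_J$ and $H\leq H_J$, whence
\[
\Phi(K)\cap H\subseteq\Phi(K_J)\cap H_J=\Phi(H_J),
\]
the equality being the criterion applied to $H_J\leq_{\textrm{ff}}K_J$. The finite sub-intersections $\{H_J\}$ form a family filtered from below with $\bigcap_JH_J=H$, so the computation $\Phi(H)=\bigcap_J\Phi(H_J)$ used in the proof of Proposition~\ref{descendin-chain-rank} yields $\Phi(K)\cap H\subseteq\bigcap_J\Phi(H_J)=\Phi(H)$, as required. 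Hence it remains only to treat finitely many subgroups, and by an obvious induction the case of two subgroups.

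For two subgroups I would use transitivity of the relation $\leq_{\textrm{ff}}$, which is immediate from the criterion (if $A\leq B\leq C$ then $\Phi(C)\cap A=(\Phi(C)\cap B)\cap A=\Phi(B)\cap A$, so $A\leq_{\textrm{ff}}B\leq_{\textrm{ff}}C$ gives $A\leq_{\textrm{ff}}C$). This factors the desired conclusion through the chain
\[
H_1\cap H_2\leq_{\textrm{ff}}K_1\cap H_2\leq_{\textrm{ff}}K_1\cap K_2,
\]
each link of which is an instance of the single assertion: if $H\leq_{\textrm{ff}}K$ and $A\leq K$ is arbitrary, then $A\cap H\leq_{\textrm{ff}}A$. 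Everything above is then reduced to this one statement, that intersecting a subgroup with a free factor again produces a free factor.

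This last assertion is where I expect the real difficulty to lie, since it is the only step that uses freeness rather than formal manipulation of $\Phi$. The clean route is the Kurosh subgroup theorem for free pro-$p$ products: writing $K=H\amalg L$ and decomposing $A\leq H\amalg L$ exhibits $A\cap H$ as one of the free factors of $A$. A more elementary attempt through the criterion stalls in a revealing way: for $x\in\Phi(A)\cap H$ one gets $x\in\Phi(K)\cap H=\Phi(H)$, and applying the retraction $\rho:K\twoheadrightarrow H$ (which fixes $x$ and carries $\Phi(A)$ into $\Phi(\rho(A))$) reduces the goal to $A\cap H\leq_{\textrm{ff}}\rho(A)$; closing this last reduction is the main obstacle. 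It is precisely here that the free-factor hypothesis must be used in an essential, non-abelian way, as the naive expectation that $\Phi$ commutes with every intersection is too weak to survive on its own (the companion estimate $A\cap\Phi(H)\subseteq\Phi(A\cap H)$ already fails, e.g. for $A=\langle a^p\rangle$, $H=F$), so the argument cannot discard the ambient free-factor data.
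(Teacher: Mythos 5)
Your reduction scheme is sound and in fact coincides with the paper's own skeleton: the passage from an arbitrary family to finite sub-intersections via $\Phi\bigl(\bigcap_J H_J\bigr)=\bigcap_J\Phi(H_J)$ for filtered families, the induction down to two subgroups, the chain $H_1\cap H_2\leq_{\textrm{ff}}K_1\cap H_2\leq_{\textrm{ff}}K_1\cap K_2$, and transitivity of $\leq_{\textrm{ff}}$ are all exactly the paper's Claims 2 and 3 together with its final limiting argument. But the one step you yourself identify as carrying all the content --- if $H\leq_{\textrm{ff}}K$ and $A\leq K$ is an arbitrary closed subgroup, then $A\cap H\leq_{\textrm{ff}}A$ --- is not established by your proposal. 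Your ``clean route'' applies the Kurosh subgroup theorem to the closed subgroup $A$ of $K=H\amalg L$, but in the pro-$p$ category Kurosh is a theorem about \emph{open} subgroups only. For closed subgroups the naive conclusion fails: for instance, the closed normal subgroup generated by $\langle a\rangle$ in $\langle a\rangle\amalg\langle t\rangle$ with $\langle t\rangle\cong\mathbb{Z}_p$ is the free pro-$p$ product of the conjugates $\langle a\rangle^{t^{\lambda}}$, $\lambda\in\mathbb{Z}_p$, indexed by the profinite space $\mathbb{Z}_p$ --- a sheaf-indexed decomposition, not a free product in the sense needed, and such a decomposition does not by itself exhibit a given intersection as a complemented free factor. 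So the tool you invoke simply is not available at the generality you need, and your admitted fallback (the retraction argument that ``stalls'') leaves the step unproved.

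The paper's proof of this very step (its Claim 1) is designed precisely to get around the failure of Kurosh for closed subgroups: it writes $K\cap M$ as the filtered intersection of the family $\{U_j\}$ of all \emph{open} subgroups of $K$ containing $K\cap M$, applies Kurosh to each open $U_j$ to conclude $U_j\cap H\leq_{\textrm{ff}}U_j$, hence $\Phi(U_j\cap H)=\Phi(U_j)\cap H$, and then passes to the limit using again that Frattini subgroups commute with filtered intersections:
\begin{displaymath}
\Phi(K\cap M)\cap H=\Bigl(\bigcap_{j}\Phi(U_j)\Bigr)\cap H=\bigcap_{j}\Phi(U_j\cap H)=\Phi(H\cap M).
\end{displaymath}
This open-subgroup approximation is the genuinely new idea your proposal is missing; without it (or some substitute argument), your proof is incomplete at its crucial point.
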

\begin{proof}

We begin the proof with three auxiliary claims.
\smallskip

\noindent \emph{Claim 1.} Let $H$ and $K$ be two subgroups of $F$  satisfying $H\leq_{\textrm{ff}} K$. Then $H\cap M \leq_{\textrm{ff}} K\cap M$ for every $M\leq F$. 

\smallskip

\noindent \emph{Proof of Claim 1.} Let $\{ U_j \mid j \in J \}$ be the set of all open subgroups of $K$ that contain $K\cap M$. Then $ K\cap M = \bigcap_{j \in J} U_j$ and $ \Phi(K\cap  M) = \bigcap_{j \in J} {\Phi(U_j)}$. By the Kurosh subgroup theorem, $U_j\cap H$ is a free factor of $U_j$. Hence, $$ \Phi(U_j\cap H) = \Phi(U_j) \cap U_j \cap H = \Phi(U_j) \cap H$$ for all $j \in J$. Furthermore,
\begin{displaymath}
\Phi(K\cap M) \cap H = (\bigcap_{j \in J}\Phi(U_j))\cap H = \bigcap_{j \in J}{\Phi(U_j\cap H)} = \Phi(K\cap M \cap H) = \Phi(H\cap M)
\end{displaymath}
 and $\Phi(K\cap M)\cap ( H  \cap M)= \Phi(K\cap M) \cap H = \Phi(H\cap M)$. Hence, $H\cap M \leq_{\textrm{ff}} K\cap M$.

\smallskip

\noindent \emph{Claim 2.}  Let $H, K,$ and $M$ be subgroups of $F$ with $H \leq_{\textrm{ff}} K \leq_{\textrm{ff}} M$. Then  $H \leq_{\textrm{ff}} M$.

\smallskip

\noindent \emph{Proof of Claim 2.}   From $\Phi(K) = K \cap \Phi(M)$ and $\Phi(H) = H \cap \Phi(K)$, we get
 \begin{displaymath}
\Phi(M) \cap H = \Phi(M) \cap K \cap H = \Phi(K) \cap H = \Phi(H).
 \end{displaymath}

\smallskip

\noindent \emph{Claim 3.} Let $H_1, H_2, K_1,$ and $K_2$ be subgroups of $F$ with $H_1  \leq_{\textrm{ff}} K_1$ and $H_2  \leq_{\textrm{ff}} K_2$. Then $H_1 \cap H_2  \leq_{\textrm{ff}} K_1 \cap K_2$.
\smallskip

\noindent \emph{Proof of Claim 3.}  By Claim 1, $H_1 \cap H_2 \leq_{\textrm{ff}} K_1 \cap H_2$ and $K_1 \cap H_2 \leq_{\textrm{ff}} K_1 \cap K_2$. It follows from Claim 2 that
$H_1 \cap H_2  \leq_{\textrm{ff}} K_1 \cap K_2$.

\smallskip

Now we are ready to prove the proposition. Set $H = \bigcap_{i \in I}H_i$ and $K = \bigcap_{i \in I}K_i$. 
Let $\mathcal{I}$ be the set of all finite subsets of $I$. 
For each $A \in \mathcal{I}$, let $H_A=\bigcap_{i \in A} H_i$ and $K_A=\bigcap_{i \in A} K_i$.
It follows by induction from Claim 3 that $H_A \leq_{\textrm{ff}} K_A$ for all $A \in \mathcal{I}$. Observe that $\{H_A \mid A \in \mathcal{I}\}$ and $\{K_A \mid A \in \mathcal{I}\}$ are families of subgroups of $F$ filtered from below. Moreover, 
$\bigcap_{A \in \mathcal{I}}H_A=H \text{ and } \bigcap_{A \in \mathcal{I}}K_A=K.$
Hence, $\Phi(H)=\bigcap_{A \in \mathcal{I}}\Phi(H_A) \text{ and } \Phi(K)=\bigcap_{A \in \mathcal{I}}\Phi(K_A).$ 
Consequently,
\[\Phi(K) \cap H=(\bigcap_{A \in \mathcal{I}}\Phi(K_A)) \cap (\bigcap_{A \in \mathcal{I}} H_A)=\bigcap_{A \in \mathcal{I}}\Phi(K_A) \cap H_A=\bigcap_{A \in \mathcal{I}}\Phi(H_A)=\Phi(H),\]
and thus $H \leq_{\textrm{ff}} K$.
\end{proof}

\begin{definition}
Let $F$ be a free pro-$p$ group and $K \leq F$. Let $\{H_i \mid i \in I\}$ be the set of all free factors of $F$ that contain $K$.
We call the subgroup $A=\bigcap_{i \in I} H_i$ the algebraic closure of $K$ in F.
Note that by Proposition~\ref{freefactors-intersection}, $A$ is a free factor of $F$.
\end{definition}

The term ``algebraic closure" is well established in the theory of free discrete groups (see \cite{AlgEx}).
The concepts of algebraic closure and  algebraic extension of subgroups of a free pro-$p$ group will be developed further in a subsequent paper by the authors.

A subgroup $H$ of a pro-$p$ group $G$ is \emph{isolated} in $G$ if for every $g \in G$ and $1 \neq \alpha \in \mathbb{Z}_p$, 
$g^{\alpha} \in H$  implies $g \in H$. 

\begin{pro}
\label{isolated}
Let $F$ be a free pro-$p$ group and $H \leq_{ \textrm{ff} } F$. Then $H$ is isolated in $F$.
\end{pro}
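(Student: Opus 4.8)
The plan is to use the fact that a free factor is a retract, combined with the uniqueness of roots in free pro-$p$ groups. Since $H \leq_{\textrm{ff}} F$, write $F = H \coprod L$ for some $L \leq F$ and let $\rho \colon F \to F$ be the retraction onto $H$, i.e. the continuous endomorphism that is the identity on $H$ and trivial on $L$; it is well defined by the universal property of the free pro-$p$ product and satisfies $\rho(F) = H$ and $\rho|_H = \mathrm{id}$.

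Now take $g \in F$ and $\alpha \in \mathbb{Z}_p$ with $\alpha \neq 0$ and $g^\alpha \in H$ (for $\alpha = 1$ there is nothing to prove). Since $\rho$ is continuous and agrees with $\beta \mapsto \rho(g)^\beta$ on the dense subset $\mathbb{Z} \subseteq \mathbb{Z}_p$, applying $\rho$ gives $\rho(g)^\alpha = \rho(g^\alpha) = g^\alpha =: c$. If $c = 1$, then since $F$ is torsion free and $\alpha \neq 0$ we conclude $g = 1 \in H$, so we may assume $c \neq 1$. Both $g$ and $\rho(g)$ centralize $c$, since $g$ commutes with $g^\alpha = c$ and $\rho(g)$ commutes with $\rho(g)^\alpha = c$; hence $g, \rho(g) \in C_F(c)$.

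The key input is the classical fact that the centralizer of a nontrivial element of a free pro-$p$ group is procyclic, so $C_F(c) \cong \mathbb{Z}_p$. Writing this centralizer additively and letting $s, t \in \mathbb{Z}_p$ correspond to $g$ and $\rho(g)$, the identity $g^\alpha = \rho(g)^\alpha$ becomes $\alpha s = \alpha t$; as $\alpha \neq 0$ and $\mathbb{Z}_p$ is an integral domain, $s = t$, i.e. $g = \rho(g) \in H$. This shows $H$ is isolated in $F$.

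The step I expect to be the crux is the appeal to procyclic centralizers (equivalently, unique extraction of roots) in free pro-$p$ groups; everything else is a formal computation with the retraction and with the continuity of the power map $\mathbb{Z}_p \to F$. A more self-contained route avoiding the centralizer theorem would first reduce, by writing $\alpha = p^n u$ with $u \in \mathbb{Z}_p^{\times}$ and using $\overline{\langle g^\alpha \rangle} = \overline{\langle g^{p^n} \rangle}$, to the case $g^p \in H$, and then induct on $n$; for the base case one would choose an open $U$ with $H \leq U \leq_o F$ and $g \notin U$, show that $g$ is primitive in $\overline{\langle U, g \rangle}$ (otherwise $g \in \Phi(\overline{\langle U, g\rangle})$ forces $\overline{\langle U, g\rangle} = U$), and then apply the Kurosh subgroup theorem to the open subgroup $U$.
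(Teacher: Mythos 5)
Your proof is correct and follows essentially the same route as the paper: write $F = H \coprod K$, apply the retraction onto $H$ to the relation $g^\alpha \in H$ to get $\rho(g)^\alpha = g^\alpha$, and conclude $g = \rho(g) \in H$ by unique extraction of roots. The only difference is cosmetic: the paper cites unique root extraction in free pro-$p$ groups as a known fact, whereas you re-derive it from torsion-freeness and the procyclicity of centralizers, which is a valid (and standard) justification of that same fact.
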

\begin{proof}
Let $F=H \coprod K$, and define $r:F \to H$ by $r(h)=h$ for all $h \in H$ and $r(k)=1$ for all $k \in K$.
Suppose that $x^{\alpha} \in H$ for some $x \in F$ and $1 \neq \alpha \in \mathbb{Z}_p$.
Then $x^{\alpha}=r(x^{\alpha})=r(x)^{\alpha}$ and by unique extraction of roots in free pro-$p$ groups, $x=r(x) \in H$. 
\end{proof}
\medskip

\section{Test elements of profinite groups}

The following concept appears in \cite{Turner}, where it was used in the proof of the Retract Theorem for free discrete groups of finite rank. 
\begin{definition}
Let $\varphi$ be an endomorphism of a group $G$. The stable image of $\varphi$ is defined by
\[\varphi^{\infty}(G) = \bigcap_{n=1}^{\infty}\varphi^n(G).\]
\end{definition}

\begin{lem}
\label{varhi_infinity}
Let $\varphi$ be an endomorphism of a profinite group $G$. The restriction of $\varphi$ to $\varphi^{\infty}(G)$ gives rise to an epimorphism $\varphi_{\infty}:\varphi^{\infty}(G) \to \varphi^{\infty}(G)$.
Moreover, if $G$ is finitely generated, then $\varphi_{\infty}$ is an isomorphism.
\end{lem}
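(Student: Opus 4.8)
The plan is to prove the two assertions in turn, first establishing that $\varphi_{\infty}$ is a well-defined epimorphism for arbitrary profinite $G$, and then upgrading this to an isomorphism under the finite-generation hypothesis. Throughout I would use the standard fact that a descending intersection of closed subgroups of a profinite group behaves well with respect to images of continuous maps, and that continuous endomorphisms of profinite groups have closed image. Write $G_n = \varphi^n(G)$, so that $G = G_0 \supseteq G_1 \supseteq G_2 \supseteq \cdots$ is a descending chain of closed subgroups and $\varphi^{\infty}(G) = \bigcap_{n} G_n$.

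\smallskip

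First I would check that $\varphi$ maps $\varphi^{\infty}(G)$ into itself, which is immediate since $\varphi(G_n) = G_{n+1} \subseteq \varphi^{\infty}(G)$ after intersecting, so restriction defines a continuous endomorphism $\varphi_{\infty}$ of $\varphi^{\infty}(G)$. The crux of the first assertion is surjectivity. Here I would argue that $\varphi(\varphi^{\infty}(G)) = \varphi(\bigcap_n G_n)$ equals $\bigcap_n \varphi(G_n) = \bigcap_n G_{n+1} = \varphi^{\infty}(G)$. The nontrivial interchange is $\varphi(\bigcap_n G_n) = \bigcap_n \varphi(G_n)$; the inclusion $\subseteq$ is automatic, and the reverse inclusion is exactly the point where compactness enters. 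Given an element $y$ in every $G_{n+1}$, the preimages $\varphi^{-1}(y) \cap G_n$ form a descending chain of nonempty closed (hence compact) subsets of $G$, so their intersection is nonempty by the finite intersection property, yielding a preimage of $y$ lying in $\bigcap_n G_n = \varphi^{\infty}(G)$. This shows $\varphi_{\infty}$ is onto.

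\smallskip

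For the moreover clause, suppose $G$ is finitely generated and set $H = \varphi^{\infty}(G)$; I must show the surjective continuous endomorphism $\varphi_{\infty} : H \to H$ is injective. The key structural input is that $H$ is itself finitely generated: this follows from Proposition~\ref{descendin-chain-rank}$(a)$, since the chain $\{G_n\}$ is filtered from below (totally ordered) and $d(G_n) \leq d(G)$ for all $n$, whence $d(H) \leq d(G) < \infty$. For a finitely generated profinite group, every surjective continuous endomorphism is automatically injective; this is the standard Hopfian property of finitely generated profinite groups, which I would invoke (it rests on the fact that such groups have only finitely many open subgroups of each index, so a surjection that failed to be injective would strictly decrease the number of open subgroups of some fixed index, an impossibility). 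Hence $\varphi_{\infty}$ is a continuous bijection between profinite groups, and since a continuous bijection of compact Hausdorff spaces is a homeomorphism, $\varphi_{\infty}$ is an isomorphism.

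\smallskip

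The main obstacle I anticipate is the image-intersection interchange $\varphi(\bigcap_n G_n) = \bigcap_n \varphi(G_n)$ in the first part; this genuinely requires compactness and is the only place where a soft topological argument is needed rather than a formal manipulation. Everything in the finitely-generated case reduces cleanly to two citable facts, namely the rank bound from Proposition~\ref{descendin-chain-rank}$(a)$ and the Hopfian property of finitely generated profinite groups, so I expect that portion to be routine once the correct references are marshalled.
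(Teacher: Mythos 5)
Your proof is correct and follows essentially the same route as the paper's: both establish surjectivity by a compactness argument that produces a preimage of a given element of $\varphi^{\infty}(G)$ lying inside $\varphi^{\infty}(G)$ itself, and both handle the finitely generated case by combining Proposition~\ref{descendin-chain-rank}~$(a)$ with the Hopfian property of finitely generated profinite groups. The only cosmetic difference is in how compactness is packaged: you apply the finite intersection property to the descending chain of nonempty compact sets $\varphi^{-1}(y)\cap\varphi^n(G)$, while the paper extracts a point from the inverse limit of the compact sets $(\varphi^n)^{-1}(y)$ -- two formulations of the same principle.
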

\begin{proof}
Note that $\varphi^{n}(G) \supseteq \varphi^{n+1}(G)$ for all $n \geq 1$. Consequently,
\begin{displaymath}
\varphi(\varphi^{\infty}(G)) = \varphi(\bigcap_{n=1}^{\infty}\varphi^n(G)) \subseteq \bigcap_{n=1}^{\infty}\varphi^{n+1}(G) =\varphi^{\infty}(G).
\end{displaymath}
Hence,  $\varphi_{\infty}(g):=\varphi(g)$ defines an endomorphism of $\varphi^{\infty}(G)$. 

Let $x \in  \varphi^{\infty}(G)$ and set $X_n = ({{\varphi}^{n}})^{-1}(x)$ for $n\geq 1$. There is an obvious inverse system $(X_n, \varphi_{n,m})$ over the positive integers where for $n \geq m$, $\varphi_{n,m}:X_n \to X_m$ is defined by $\varphi_{n,m}(g)=\varphi^{n-m}(g)$ for all $g \in X_n$ . Since all $X_n$ are clearly compact, $\varprojlim X_n \neq \emptyset$. Let ${(a_n)}_{n=1}^{\infty} \in \varprojlim X_n $. Then $\varphi(a_1) = x$ and $a_1 \in \varphi^{\infty}(G)$ ($\varphi^n(a_{n+1})=\varphi_{n+1,1}(a_{n+1}) = a_1$ implies $a_1 \in \varphi^n(G)$ ). Hence, 
$\varphi_{\infty}$ is surjective. 

If $G$ is finitely generated with $d(G)=r$, then $d(\varphi^n(G)) \leq r$ for all $n \geq 1$. By Proposition~\ref{descendin-chain-rank} $(a)$, $d(\varphi^{\infty}(G)) \leq r$ and it follows from the Hopfian property of finitely generated 
profinite groups that $\varphi_{\infty}$ is an isomorphism. 
\end{proof}

\begin{rem}
If $G$ is not finitely generated, then $\varphi_{\infty}$ may not be injective. Indeed, let $F$ be the free pro-$p$ group on the infinite set $X=\{x_i \mid i \in \mathbb{N}\}$. Define $\varphi:F \to F$ by $\varphi(x_0)=1$, 
$\varphi(x_{2^n})=x_0$ for $n \in \mathbb{N}$, and $\varphi(x_m)=x_{m+1}$ for $m \in \mathbb{N} \setminus (\{2^n \mid n \in \mathbb{N}\} \cup \{0\})$. Then $x_0 \in \varphi^{\infty}(F)$ but $\varphi_{\infty}(x_0)=1$.   
\end{rem}

Recall that a \emph{retract} of a group $G$ is a subgroup $H \leq G$ for which there exists an epimorphism $r:G \to H$ that restricts to the identity homomorphism on $H$; such epimorphism $r$ is called \emph{retraction}.
\begin{lem}
\label{varphi-retract}
Let $\varphi$ be an endomorphism of a finitely generated profinite group $G$. Then $\varphi^{\infty}(G)$ is a retract of $G$.
\end{lem}
\begin{proof}  
By Proposition~\ref{descendin-chain-rank} $(a)$, $\varphi^{\infty}(G)$ is finitely generated. Let $\{g_1, ..., g_r  \}$ be a generating set for  $\varphi^{\infty}(G)$, and
extend $\{ g_1, ..., g_r \}$  to a generating set $\{ g_1, ..., g_m \}$ ($m\geq r$) for $G$.  
We define a homomorphism $\alpha: G \to \varphi^{\infty}(G)$ as follows. Consider the sequence ${\{ (\varphi^k(g_1), ... , \varphi^k(g_m)) \}}_{k=1}^{\infty}$ of elements 
 in $G \times \cdots \times G$. After possibly passing to a subsequence, we can assume that $\displaystyle \lim_{k\to \infty}{ (\varphi^k(g_1), ... , \varphi^k(g_m) )} = (h_1, ..., h_m)$ exists. 
Given $ n \geq 1$ and $1 \leq i \leq m$, $\varphi^k(g_i) \in \varphi^{k}(G) \subseteq \varphi^{n}(G)$ for all $k \geq n$. Hence, $\displaystyle h_i = \lim_{k \to \infty}\varphi^k(g_i) \in \varphi^{n}(G)$, and since $n$ is arbitrary, $h_i \in \varphi^{\infty}(G)$. 
Let $u(x_1, ... , x_m)$ be any word in $m$ variables (i.e. $u(x_1, ... , x_m)$ is an element in the free profinite group on $\{x_1, \ldots, x_m\}$). Then
 \begin{displaymath}
 u(h_1, ... , h_m) = u(\lim_{k\to \infty}\varphi^k(g_1), ..., \lim_{k \to \infty}\varphi^k(g_m)) = 
 \end{displaymath}
 \begin{displaymath}
 \lim_{k\to \infty}u(\varphi^k(g_1), ..., \varphi^k(g_m)) = \lim_{k \to \infty}\varphi^k(u(g_1, \cdots, g_m)).
 \end{displaymath}
Therefore, $u(g_1, ..., g_m) = 1$ implies $u(h_1, ..., h_m) =1$. This shows that the assignment $\alpha(g_i) = h_i$, $1 \leq i \leq m$, defines a continuous homomorphism $\alpha: G \to \varphi^{\infty}(G)$. 

By  Lemma~\ref{varhi_infinity}, the restriction of $\varphi$ to $\varphi^{\infty}(G)$ yields an automorphism $\varphi_{\infty}:\varphi^{\infty}(G) \to \varphi^{\infty}(G)$.
Since $\phi^{\infty}(G)$ is finitely generated, $\textrm{Aut}(\varphi^{\infty}(G))$ is a profinite group. It follows that after possibly passing to a subsequence,  we can assume that 
$\displaystyle \lim_{k \to \infty}\varphi_{\infty}^k= \beta$  exists. By definition of the congruence subgroup topology on $\textrm{Aut}(\varphi^{\infty}(G))$, for every $N \unlhd \varphi^{\infty}(G)$, there exists $n \geq 1$ such that 
$\beta^{-1}(\varphi_{\infty}^k(g))g^{-1} \in \beta^{-1}(N)$ (or equivalently $\varphi_{\infty}^k(g)\beta(g)^{-1} \in N$) for all $k \geq n$ and $g \in \varphi^{\infty}(G)$. 
In particular,  $\varphi_{\infty}^k(g_i)\beta(g_i)^{-1} \in N$ for all $1 \leq i \leq r$; thus $\displaystyle \alpha(g_i)\beta(g_i)^{-1}=\lim_{k \to \infty}\varphi_{\infty}^k(g_i)\beta(g_i) \in N$ for $1 \leq i \leq r$. 
Since $g_1, \ldots, g_r$ generate $\varphi^{\infty}(G)$ and $N \unlhd \varphi^{\infty}(G)$ is an arbitrary normal subgroup, it follows that $\beta=\alpha_{|\varphi^{\infty}(G)}$. 
Therefore, $\alpha_{|\varphi^{\infty}(G)}$ is an automorphism and  $\alpha_{|\varphi^{\infty}(G)}^{-1}\circ \alpha: G \to \varphi^{\infty}(G) $  is a retraction.
\end{proof}

\begin{thm}
\label{test-retract}
The test elements of a finitely generated  profinite group $G$ are exactly the elements not contained in any proper retract of $G$.
\end{thm}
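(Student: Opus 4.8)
The plan is to prove the two implications of the biconditional separately, leaning on the stable-image machinery already developed. The forward (easy) direction I would handle by contraposition. Suppose $g$ lies in a proper retract $H$ of $G$, and let $r\colon G \to H$ be a retraction. Composing $r$ with the inclusion $\iota\colon H \hookrightarrow G$ yields an endomorphism $\varphi = \iota \circ r$ of $G$. Since $r$ restricts to the identity on $H$ and $g \in H$, we have $\varphi(g) = g$; on the other hand, the image of $\varphi$ is exactly $H \subsetneq G$, so $\varphi$ is not surjective and therefore not an automorphism. Hence $g$ is not a test element, which shows that a test element can lie in no proper retract.

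For the converse — the substantive direction — suppose $g$ is not a test element, so that there is an endomorphism $\varphi\colon G \to G$ with $\varphi(g) = g$ but $\varphi \notin \textrm{Aut}(G)$. The idea is to locate $g$ inside the stable image $\varphi^{\infty}(G) = \bigcap_{n \geq 1}\varphi^n(G)$, which Lemma~\ref{varphi-retract} has already identified as a retract of $G$. Because $\varphi(g) = g$, iterating gives $g = \varphi^n(g) \in \varphi^n(G)$ for every $n \geq 1$, and therefore $g \in \varphi^{\infty}(G)$.

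It then remains only to check that this retract is proper. Here I would invoke the Hopfian property of finitely generated profinite groups — the same fact used in Lemma~\ref{varhi_infinity} — namely that a surjective endomorphism of such a group is automatically an automorphism. Since $\varphi$ is not an automorphism by hypothesis, it cannot be surjective, so $\varphi(G) \subsetneq G$; consequently $\varphi^{\infty}(G) \subseteq \varphi(G) \subsetneq G$. Thus $\varphi^{\infty}(G)$ is a proper retract of $G$ containing $g$, and the two directions together establish the theorem.

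As for the main obstacle: the genuine analytic content has been absorbed into Lemma~\ref{varphi-retract}, where compactness and the profiniteness of $\textrm{Aut}(\varphi^{\infty}(G))$ are used to produce the retraction. Granting that lemma, the theorem itself is essentially formal; the only point requiring care is the properness of $\varphi^{\infty}(G)$, which I expect to follow cleanly from Hopficity once it is observed that failing to be an automorphism forces failure of surjectivity.
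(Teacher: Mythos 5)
Your proposal is correct and follows the paper's own argument essentially verbatim: the easy direction via the composite $\iota\circ r$, and the converse by placing $g$ in the stable image $\varphi^{\infty}(G)$, invoking Lemma~\ref{varphi-retract} to see it is a retract, and using the Hopfian property of finitely generated profinite groups to conclude $\varphi^{\infty}(G)\subseteq\varphi(G)\subsetneq G$. The only difference is that you spell out the Hopficity step that the paper compresses into ``since $G$ is finitely generated, $\varphi$ is not surjective,'' which is a welcome clarification rather than a deviation.
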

\begin{proof}
Let $g \in G$. If $g$ belongs to a proper retract $H$ of $G$, then the composition of a retraction $r: G \to H$ and the inclusion homomorphism $H \hookrightarrow G$ is an endomorphism of $G$ which fixes $g$ but is not an automorphism. 

Now assume that $g \in G$ is not a test element of $G$. Then there exists a homomorphism $\varphi: G \to G$ that is not an automorphism and such that $\varphi(g) = g$. Since $G$ is finitely generated, $\varphi$ is not surjective and thus  $\varphi^{\infty}(G)$ is a proper subgroup of $G$. By Lemma~\ref{varphi-retract}, $\varphi^{\infty}(G)$ is a retract of $G$. Clearly, $g \in \varphi^{\infty}(G)$.
\end{proof}

\begin{cor}
\label{test-free}
The only retracts of a free pro-$p$ group are the free factors.
The test elements of a free pro-$p$ group $F$ of finite rank are the elements not contained in any proper free factor of $F$.
\end{cor}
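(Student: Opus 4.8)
The plan is to prove the two assertions in turn, deducing the second from the first together with Theorem~\ref{test-retract}. First I would dispose of the characterization of retracts. One direction is immediate: if $F = H \coprod K$ is a free factor decomposition, then the map $r \colon F \to H$ that is the identity on $H$ and trivial on $K$ is a retraction (this is exactly the map constructed in the proof of Proposition~\ref{isolated}), so every free factor is a retract. The substance is the converse, that every retract is a free factor, and here I would invoke the Frattini criterion recalled in the introduction: a subgroup $H \leq F$ is a free factor of $F$ precisely when $\Phi(F) \cap H = \Phi(H)$.

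So let $H$ be a retract of $F$ with retraction $r \colon F \to H$, so that $r(F)=H$ and $r$ restricts to the identity on $H$. The inclusion $\Phi(H) \subseteq \Phi(F) \cap H$ always holds, since $H^p[H,H] \subseteq F^p[F,F]$ gives $\Phi(H) = \overline{H^p[H,H]} \subseteq \overline{F^p[F,F]} = \Phi(F)$, while $\Phi(H) \subseteq H$ is trivial. For the reverse inclusion I would exploit that $r$ carries the Frattini subgroup onto the Frattini subgroup: since $r$ is a continuous surjection of pro-$p$ groups, one has $r(\Phi(F)) = r(\overline{F^p[F,F]}) = \overline{r(F^p[F,F])} = \overline{H^p[H,H]} = \Phi(H)$. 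Now take any $x \in \Phi(F) \cap H$. Because $r$ fixes $H$ pointwise, $x = r(x) \in r(\Phi(F)) = \Phi(H)$. Hence $\Phi(F) \cap H = \Phi(H)$, and $H$ is a free factor of $F$.

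With the first assertion established, the second is immediate. When $F$ has finite rank it is a finitely generated profinite group, so by Theorem~\ref{test-retract} its test elements are exactly the elements lying in no proper retract; by what we have just proved, the proper retracts of $F$ are precisely its proper free factors, which yields the stated description.

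I would expect no serious obstacle here: the conceptual weight has already been absorbed into Theorem~\ref{test-retract} and the Frattini characterization of free factors, and the corollary merely assembles them. The one point demanding a little care is the identity $r(\Phi(F)) = \Phi(H)$ in the topological setting, where one must use compactness to commute the closure with the continuous surjection $r$ (the image $r(\overline{A})$ is compact, hence closed, so it equals $\overline{r(A)}$ for $A = F^p[F,F]$); everything else is formal.
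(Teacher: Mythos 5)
Your proof is correct and follows essentially the same route as the paper: both establish the reverse inclusion $\Phi(F)\cap H\subseteq\Phi(H)$ by applying the retraction $r$ (using $r(x)=x$ for $x\in H$ together with $r(\Phi(F))=\Phi(H)$), invoke the Frattini criterion for free factors, and then deduce the statement on test elements from Theorem~\ref{test-retract}. The only differences are cosmetic: you additionally spell out the easy direction that free factors are retracts and the compactness argument for $r(\Phi(F))=\Phi(H)$, which the paper leaves implicit.
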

\begin{proof}
Let $F$ be a free pro-$p$ group and $r:F \to H$ be a retraction. Then $\Phi(H) \subseteq H \cap \Phi(F)$, and
using that $r$ is a retraction, we get
\[H \cap \Phi(F)=r(H \cap \Phi(F)) \subseteq r(H) \cap r(\Phi(F))=H \cap \Phi(H)=\Phi(H).\]
Hence, $H \cap \Phi(F)=\Phi(H)$ and $H$ is a free factor of $F$.
The last statement of the Corollary follows from Theorem~\ref{test-retract}.
\end{proof}

We can paraphrase the second statement in Corollary~\ref{test-free} as follows. An element $w$ in a free pro-$p$ group $F$ of finite rank is a test element of $F$ if and only if $F$ is algebraic over $\langle w \rangle$, i.e, 
$F$ is the algebraic closure of $\langle w \rangle$ in $F$.

\smallskip

Recall that a profinite group $G$ is termed \emph{just infinite} if $G$ is infinite and every proper quotient of $G$ is finite. The groups $\textrm{SL}_d(\mathbb{Z}_p)$ and $ \textrm{SL}_d(\mathbb{F}_p[[t]])$, where $\mathbb{F}_p[[t]]$ is the pro-$p$ ring of formal power series over a finite field with $p$ elements, are examples of just infinite profinite  groups. Note that these groups are virtually pro-$p$. Indeed,  many of the well-known just infinite pro-$p$ groups are groups of Lie type defined over $\mathbb{Z}_p$ (i.e., $p$-adic analytic groups) or over $\mathbb{F}_p[[t]]$.   In particular,  $\textrm{SL}_d^1(\mathbb{Z}_p) = \textrm{ker}(\textrm{SL}_d(\mathbb{Z}_p) \to \textrm{SL}_d(\mathbb{Z}_p/p{\mathbb{Z}_p})) $, the first congruence subgroup of $\textrm{SL}_d(\mathbb{Z}_p)$, is a just infinite pro-$p$ group.  For a thorough discussion of $p$-adic analytic just infinite pro-$p$ groups see \cite{Klaas}. An example of a just infinite pro-$p$ group linear over $\mathbb{F}_p[[t]]$ is $\textrm{SL}_d^1(\mathbb{F}_p[[t]]) = \textrm{ker}(\textrm{SL}_d(\mathbb{F}_p[[t]]) \to \textrm{SL}_d(\mathbb{F}_p[[t]]/{t\mathbb{F}_p[[t]]})) $, the first congruence subgroup of $ \textrm{SL}_d(\mathbb{F}_p[[t]])$. The Nottingham group  $\mathcal{N}(\mathbb{F}_p)$, which can be described as  the group of automorphisms of $\mathbb{F}_p[[t]]$ that act trivially on $t\mathbb{F}_p[[t]]/{t^2\mathbb{F}_p[[t]]}$, is an example of a just infinite pro-$p$ group that is not linear over any profinite ring. The most remarkable property of $\mathcal{N}(\mathbb{F}_p)$  is the fact that it is universal, namely, it contains a copy of every countably based pro-$p$ group. A careful treatment of just infinite profinite groups is given in \cite{Wilson}.

\begin{cor}
In a finitely generated just infinite profinite group every element of infinite order is a test element. In particular, in a torsion free finitely generated just infinite profinite group every non-trivial element is a test element.
\end{cor}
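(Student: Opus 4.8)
The plan is to reduce everything to Theorem~\ref{test-retract}, which identifies the test elements of a finitely generated profinite group as precisely those lying in no proper retract. Thus, given an element $g \in G$ of infinite order, it suffices to show that $g$ cannot belong to any proper retract of $G$. The crux of the argument is the following observation, which I would isolate first: \emph{every proper retract of a finitely generated just infinite profinite group is finite}. Once this is established, an element of infinite order is automatically excluded from every proper retract (a finite subgroup contains no element of infinite order), and hence is a test element by Theorem~\ref{test-retract}.

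To prove that a proper retract $H$ is finite, I would take a retraction $r \colon G \to H$ (a continuous epimorphism with $r|_H = \mathrm{id}_H$) and set $N = \ker(r)$, a closed normal subgroup of $G$. The first step is to check that $N \neq 1$. Indeed, if $N = 1$ then $r$ is injective, and for every $g \in G$ one has $r(r(g)) = r(g)$ because $r(g) \in H$ and $r$ fixes $H$ pointwise; injectivity then forces $r(g) = g$, so $g \in H$. This would give $G = H$, contradicting that $H$ is a proper retract. Hence $N$ is a nontrivial closed normal subgroup.

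Now I would invoke the just infinite hypothesis: since $N \neq 1$, the quotient $G/N$ is finite. The first isomorphism theorem gives $G/N \cong \mathrm{im}(r) = H$, so $H$ is finite, completing the key observation. Consequently, if $g \in G$ has infinite order, it lies in no finite subgroup, hence in no proper retract, and Theorem~\ref{test-retract} yields that $g$ is a test element. The "in particular" statement is then immediate: in a torsion free such group every non-trivial element has infinite order, and is therefore a test element.

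I do not anticipate a serious obstacle here; the argument is short once one recognizes that the semidirect-product structure $G = H \ltimes N$ underlying a retraction interacts so cleanly with the just infinite property. The only point demanding a little care is the verification that $N \neq 1$ (equivalently, that a retraction onto a proper subgroup has nontrivial kernel), which is exactly where the defining property $r|_H = \mathrm{id}_H$ of a retraction is used.
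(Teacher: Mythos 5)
Your proof is correct and follows essentially the same route as the paper: both reduce to Theorem~\ref{test-retract} and observe that a proper retract of a just infinite group is a proper (hence finite) quotient, so it cannot contain an element of infinite order. Your explicit verification that the retraction onto a proper subgroup has nontrivial kernel is a detail the paper leaves implicit, but the argument is identical in substance.
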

\begin{proof}
Let $G$ be a finitely generated just infinite profinite group, and let $w\in G$ be an element of infinite order. Suppose that $w$ is not a test element of $G$. By Theorem \ref{test-retract}, there is a proper retract $H$ of $G$ containing $w$. Thus $H$ is an infinite proper quotient of $G$, which is a contradiction. Hence, $w$ is a test element of $G$.
\end{proof}

Note that if $G$ is a just infinite pro-$p$ group, then it is finitely generated since $G/{\Phi(G)}$ is finite. Thus the above result holds for all just infinite pro-$p$ groups.

A profinite group $G$ is hereditarily  just infinite if every open subgroup of $G$ is just infinite. All the just infinite pro-$p$ (profinite) groups mentioned above are indeed examples of hereditarily just infinite pro-$p$ (profinite) groups.

\begin{cor}
Let $G$ be a finitely generated hereditarily just infinite profinite group, and let $w\in G$ be an element of infinite order. Then $w$ is a test element of every open subgroup of $G$ that contains it.
\end{cor}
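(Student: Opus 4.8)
The plan is to reduce the statement directly to the preceding Corollary, which asserts that in a finitely generated just infinite profinite group every element of infinite order is a test element. The key observation is that each open subgroup of $G$ containing $w$ inherits all three hypotheses needed to apply that result, so the whole argument is a matter of checking that these hypotheses transport to open subgroups.

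Concretely, I would fix an open subgroup $U \leq_o G$ with $w \in U$ and verify three things. First, $U$ is finitely generated: this is the standard fact that an open (hence finite-index) subgroup of a finitely generated profinite group is again finitely generated. Second, $U$ is just infinite: this is precisely the defining property of a hereditarily just infinite profinite group, namely that every open subgroup is just infinite. Third, $w$ still has infinite order as an element of $U$, which is immediate since $U \leq G$ and the order of $w$ is the same whether computed in $U$ or in $G$.

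With these three facts in hand, $U$ is a finitely generated just infinite profinite group and $w \in U$ is an element of infinite order, so the preceding Corollary applies and shows that $w$ is a test element of $U$. Since $U$ was an arbitrary open subgroup of $G$ containing $w$, this establishes the claim.

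There is no substantial obstacle here: the result is an immediate consequence of the preceding Corollary once one notes that the hereditary hypothesis is exactly what is required to push just-infiniteness down to open subgroups, and that finite generation is automatically preserved under passage to finite-index subgroups. The only background input beyond the results already established in this section is this standard preservation of finite generation.
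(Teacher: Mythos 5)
Your proof is correct and is exactly the argument the paper intends: the corollary is stated there without proof precisely because it follows immediately from the preceding corollary once one notes that an open subgroup of a finitely generated profinite group is finitely generated, that the hereditary hypothesis makes each open subgroup just infinite, and that infinite order passes to subgroups. Nothing further is needed.
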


Contrary to the case of just infinite  torsion free pro-$p$ groups where every non-trivial element is a test element, free abelian pro-$p$ groups of finite rank greater than one do not have test elements.
To see this,  let  $G = \mathbb{Z}_p^n$ be the free abelian pro-$p$ group of rank $n \geq 2$ and suppose that $x \in \Phi(G)=G^p$. Then there is a positive integer $k$ such that $x\in G^{p^k} \setminus G^{p^{k+1}}$.  By unique extraction of roots, there exists $y \in G$ such that $y^{p^k} = x$. Clearly, $y \in G \setminus G^p$ and so $\langle y \rangle$ is a proper retract of $G$. Hence, by Theorem  \ref{test-retract}, $x$ is  not a test element of $G$.

\section{Endomorphisms that preserve an automorphic orbit}
 
\begin{lem}
\label{retract-intersection}
Let $G$ be a profinite group, and let $H \leq K \leq G$.
\begin{itemize}
\item[(a)]  If $K$ is a retract of $G$ and $H$ is a retract of $K$, then $H$ is a retract of $G$.
\item[(b)]  If $H$ is a retract of $G$, then it is also a retract of $K$. 
\item[(c)] If $G$ is finitely generated, then $H$ is contained in a retract of $G$ that is minimal among retracts of $G$ that contain $H$.
Moreover, all such minimal retracts over $H$ are isomorphic.
\end{itemize} 
\end{lem}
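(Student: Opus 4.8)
The plan is to prove the three parts of Lemma~\ref{retract-intersection} in order, leveraging the definition of retract via retractions and the Hopfian property of finitely generated profinite groups.

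For part $(a)$, suppose $r_K:G \to K$ is a retraction onto $K$ and $r_H:K \to H$ is a retraction onto $H$. I would simply compose them: consider $r=r_H \circ r_K:G \to H$. Since $r_K$ fixes $K$ pointwise and $r_H$ fixes $H$ pointwise, for every $h \in H \subseteq K$ we get $r(h)=r_H(r_K(h))=r_H(h)=h$, so $r$ restricts to the identity on $H$. Both maps are surjective (retractions are epimorphisms), hence so is their composition, and $r$ is continuous as a composition of continuous maps. Thus $r$ is a retraction of $G$ onto $H$.

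For part $(b)$, let $r:G \to H$ be a retraction. I would restrict $r$ to $K$: define $r'=r|_K:K \to H$. Because $H \subseteq K$, this is well-defined as a map into $H$, it is continuous, and it fixes $H$ pointwise since $r$ does. The only point requiring a small argument is surjectivity of $r'$: given $h \in H \subseteq K$, we have $r'(h)=r(h)=h$, so $r'$ already hits every element of $H$ using $H$ itself as source. Hence $r'$ is a retraction of $K$ onto $H$.

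Part $(c)$ is where the finite-generation hypothesis enters and is the main obstacle. The plan is to produce a minimal retract over $H$ via a descending-chain argument. I would consider the family of all retracts of $G$ containing $H$, ordered by inclusion, and form a descending chain of such retracts; using that all of them are finitely generated (as closed subgroups of a finitely generated profinite group need not be finitely generated in general, so here I must be careful and instead control the generating numbers through Proposition~\ref{descendin-chain-rank}$(a)$, which bounds $d$ of an intersection of a filtered-from-below family). The delicate step is to show the intersection of a descending chain of retracts containing $H$ is again a retract containing $H$: one passes to the stable object using that each retract $R$ satisfies $d(R) \leq d(G)$, so by Proposition~\ref{descendin-chain-rank}$(a)$ the chain's intersection is finitely generated, and a compactness/inverse-limit argument patches the retractions together, much as in the proof of Lemma~\ref{varphi-retract}. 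Once a minimal retract $R_0 \supseteq H$ exists, for the ``moreover'' claim I would take two minimal retracts $R_0,R_1$ over $H$ with retractions $r_0,r_1$; then $r_0|_{R_1}:R_1 \to R_0$ and $r_1|_{R_0}:R_0 \to R_1$ are retractions by part $(b)$, and their composite $r_1|_{R_0} \circ r_0|_{R_1}:R_1 \to R_1$ is an endomorphism fixing $H$; minimality forces its stable image to be all of $R_1$, so by the Hopfian property this composite is an automorphism, whence $r_0|_{R_1}$ is an isomorphism $R_1 \cong R_0$. The main care needed is justifying that the intersection of the chain remains a retract, which I expect to be the technical heart of the argument.
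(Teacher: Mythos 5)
Your overall architecture matches the paper's proof closely: parts (a) and (b) are indeed handled exactly as you do (the paper dismisses them as trivial), and for the existence half of (c) the paper runs precisely your program --- every retract $R$ of $G$ satisfies $d(R)\leq d(G)$, the intersection of a chain of retracts containing $H$ is finitely generated by Proposition~\ref{descendin-chain-rank}(a), a compactness/limit argument in the style of Lemma~\ref{varphi-retract} assembles the retractions into a retraction onto the intersection, and Zorn's lemma finishes. (One implementation detail the paper makes explicit and you gloss over: since the sequential limit argument of Lemma~\ref{varphi-retract} needs a sequence, the paper first replaces an arbitrary chain by a countable descending sequence with the same intersection, using that a finitely generated profinite group has only countably many open subgroups; alternatively one can use subnets, but some such step is needed.) The skeleton of your uniqueness argument --- compose the two restricted retractions, note the composite fixes $H$, identify its stable image as a retract of $G$ over $H$ via Lemma~\ref{varphi-retract} and part (a), then invoke minimality and Hopficity --- is also the paper's.

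There is, however, a genuine gap in your last step. From the single composite $\beta=r_1|_{R_0}\circ r_0|_{R_1}$ being an automorphism of $R_1$ you may conclude only that $r_0|_{R_1}$ is injective and $r_1|_{R_0}$ is surjective; this does not yield that $r_0|_{R_1}$ is an isomorphism onto $R_0$. Indeed, an injection $R_1\hookrightarrow R_0$ and a surjection $R_0\twoheadrightarrow R_1$ composing to the identity never force $R_1\cong R_0$: take $R_1=\mathbb{Z}_p$, $R_0=\mathbb{Z}_p\times\mathbb{Z}_p$, $x\mapsto (x,0)$ followed by $(x,y)\mapsto x$. The tell-tale sign is that your argument uses the minimality of $R_1$ but never that of $R_0$, so it cannot possibly prove the symmetric conclusion. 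The fix is the symmetric argument, which the paper carries out explicitly: the other composite $\gamma=r_0|_{R_1}\circ r_1|_{R_0}\colon R_0\to R_0$ also fixes $H$, so by the same stable-image/minimality/Hopfian reasoning (now using minimality of $R_0$) it is an automorphism, and then $r_0(R_1)\supseteq \gamma(R_0)=R_0$ gives surjectivity of $r_0|_{R_1}$, hence the desired isomorphism. A second, purely terminological, slip: $r_0|_{R_1}\colon R_1\to R_0$ and $r_1|_{R_0}\colon R_0\to R_1$ are not ``retractions by part (b)'' --- part (b) would require $R_0\leq R_1$ (respectively $R_1\leq R_0$), and neither minimal retract need contain the other. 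They are merely restrictions of retractions; fortunately, all the argument ever uses is that their composites fix $H$ pointwise.
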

\begin{proof}
Parts $(a)$ and $(b)$ are trivial. Suppose that $G$ is finitely generated, and  
let $\{R_i\}_{i \in I}$ be a chain of retracts of $G$ all of which contain $H$. Set $ R=\bigcap_{i \in I}R_i$.
We will show that there is a descending sequence 
\[R_{i_0} \geq R_{i_1} \geq R_{i_2} \geq \ldots \] 
with $R=\bigcap_{k=0}^{\infty}R_{i_k}$. For each $i \in I$, let $\mathcal{S}_i$ be the set of all open subgroups of $G$ that contain $R_i$. Then $\mathcal{S}=\bigcup_{i\in I}\mathcal{S}_i$ is the set of open subgroups of $G$ that
contain $R$. As $G$ is finitely generated, it has countably many open subgroups. Therefore, $\mathcal{S}=\{U_1, U_2, \ldots\}$ is a counable set.
Let $R_{i_0}$ be any subgroup in $\{R_i\}_{i \in I}$ contained in $U_1$, and let $m$ be the smallest natural number such that $U_m \notin \mathcal{S}_{i_0}$. (If no such $m$ exists, then set $R_{i_k}=R_{i_0}$ for all $k \geq 1$). 
Choose $R_{i_1}$ to be any subgroup in $\{R_i\}_{i \in I}$ contained in $U_m$.  Since  $\{R_i\}_{i \in I}$ is a chain, $R_{i_0} \geq R_{i_1}$. It is now clear that if we continue this procedure, we will obtain the desired descending sequence.

For each $k \in \mathbb{N}$, let $r_{i_k}:G \to R_{i_k}$ be a retraction.
Clearly, $d(R_{i_k}) \leq d(G)$ for all $k \in \mathbb{N}$.
By Proposition~\ref{descendin-chain-rank} $(a)$, $R$ is finitely generated. 
Let $\{g_1, \ldots, g_l\} \subseteq R$ be a generating set for $R$ and $\{g_1, \ldots, g_n\} \subseteq G$ $ (l \leq n)$ be a generating set for $G$.
As in the proof of Lemma~\ref{varphi-retract}, after possibly passing to a subsequence, we can assume that $\displaystyle \lim_{k\to \infty}{ (r_{i_k}(g_1), ... , r_{i_k}(g_n) )} = (h_1, ..., h_n)$ for some $\{h_1, \ldots, h_n\} \subseteq R$.
Again as in the proof of Lemma~\ref{varphi-retract}, we have a well-defined homomorphism 
$r:G \to R$ satisfying $r(g_j)=h_j$ for all $1 \leq j \leq n$.  Since for all $k \in \mathbb{N}$, $r_{i_k}$ restricts to the identity on $R$, we have $h_j=g_j$ for $1 \leq j \leq l$. Hence, $r$ is a retraction.
Now the first statement in $(c)$ follows from Zorn's lemma. 

Suppose that $ T_1$ and $T_2$ are two retracts of $G$ that contain $H$ and are both minimal among retracts of $G$ that contain $H$.
Let $r_1:G \to T_1$ and $r_2:G \to T_2$ be retractions. Since $r_2(T_1) \subseteq T_2$, we can restrict $r_2$ to obtain a homomorphism $r'_2:T_1 \to T_2$.
Similarly, by restricting $r_1$, we get a homomorphism $r'_1:T_2 \to T_1$. Consider the homomorphism $\alpha=r'_1 \circ r'_2:T_1 \to T_1$.
As $H \subseteq T_1 \cap T_2$, for every $h \in H$, we have 
\[\alpha(h)=r'_1(r'_2(h))=r_1(r_2(h))=r_1(h)=h.\]
So, $H \subseteq Fix(\alpha) \subseteq \alpha^{\infty}(T_1)$. Since $G$ is finitely generated, so is $T_1$, and it follows from Lemma~\ref{varphi-retract} that $\alpha^{\infty}(T_1)$ is a retract of $T_1$.
Moreover, by part $(b)$, $\alpha^{\infty}(T_1)$ is a retract of $G$. The minimality condition on $T_1$ yields $T_1=\alpha^{\infty}(T_1)$.
Therefore, $\alpha(T_1)=T_1$, and by the Hopfian property of finitely generated profinite groups, $\alpha$ is an isomorphism.
Analogously, $r'_2 \circ r'_1: T_2 \to T_2$ is an isomorphism. Hence, $r'_1:T_1 \to T_2$ and $r'_2:T_2 \to T_1$ are isomorphisms.  
\end{proof}

\begin{rem}
It follows from Corollary~\ref{test-free} and Proposition~\ref{freefactors-intersection} that for every subgroup $H$ of a free pro-$p$ group $F$, there is a unique minimal retract of $F$ over $H$, namely, the algebraic closure of $H$ in $F$. One might wonder whether  for every ``nice" subgroup $H$  (maybe every finitely generated subgroup if not all subgroups)  of a ``nice"" pro-$p$ group $G$ there is always a unique minimal retract of $G$ over $H$. 
That can hardly be the case since, as we shell see, it already fails for Demushkin groups (see Section~\ref{demushkin} for the definition of a Demushkin group).
Let $G$ be a pro-$p$ completion of an orientable surface group of  even genus, i.e.,   $G = \langle x_1, x_2, \ldots, x_{2n} \mid [x_1, x_2][x_3, x_4] \ldots [x_{2n-1}, x_{2n}] \rangle$ for some even positive integer $n$.
Set $H_1 = \langle x_1, \ldots, x_n \rangle, H_2=\langle x_{n+1}, \ldots, x_{2n} \rangle$, and  $A=\langle [x_1, x_2] \ldots [x_{n-1}, x_n] \rangle=\langle [x_{n+1}, x_{n+2}] \ldots [x_{2n-1}, x_{2n}] \rangle$. 
Define  $r_1:G \to H_1$ by $r_1(x_i) = x_i$ for $1 \leq i \leq n$ and $r_1(x_{n+i+1})=x_{n - i}$ for $0 \leq i \leq n-1$. Clearly, $r_1$ is a well-defined retraction onto $H_1$.
Similarly, we have a retraction $r_2:G \to H_2$ defined by $r_2(x_{i+1}) = x_{2n-i}$ for $0 \leq i \leq n-1$ and $r_2(x_{i})=x_{ i}$ for $n+1 \leq i \leq 2n$.
Note that $G=H_1 \coprod_A H_2$ is a proper amalgam and $H_1 \cap H_2 = A$. Clearly, $H_1$ is a free pro-$p$ group  on $\{x_1, \ldots, x_n\}$ (see Section~\ref{demushkin}). It follows from Proposition~\ref{almost_primitive - examples} $(d)$ that
 $[x_1, x_2] \ldots [x_{n-1}, x_n] \in H_1$ is a test element of $H_1$, and thus by Theorem~\ref{test-retract}, $A$ is not contained in a proper retract of $H_1$. Therefore, $H_1$ is a minimal retract of $G$ over $A$.
A similar argument shows that $H_2$ is also a minimal retract of $G$ over $A$. Note that this example also shows that the intersection of two retracts of $G$ is not necessarily a retract of $G$. 
In fact, it is easy to see that for a finitely generated profinite group $G$ the intersection of retracts is always a retract if and only if every subgroup $H \leq G$ is contained in a unique minimal retract of $G$ over $H$.
\end{rem}

An element of a pro-$p$ group $G$ is called a \emph{primitive element} of $G$ if it is a member of a minimal generating set for $G$.
In other words, the primitive elements of $G$ are exactly the elements in $G \setminus \Phi(G)$. 

\begin{lem}\label{orbit-primitive}
Let $G$ be a finitely generated pro-p group and $\varphi:G \to G$ be a homomorphism. If $\varphi$ maps primitive elements to primitive elements, then it is an isomorphism. 
\end{lem}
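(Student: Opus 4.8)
The plan is to reduce everything to the action of $\varphi$ on the Frattini quotient $V=G/\Phi(G)$, which is a finite-dimensional $\F_p$-vector space of dimension $d(G)$. First I would observe that $\varphi$ descends to $V$: since any homomorphism carries $G^p$ into $G^p$ and $[G,G]$ into $[G,G]$, we have $\varphi(\Phi(G)) \subseteq \Phi(G)$ (recall $\Phi(G)=G^p[G,G]$ for a pro-$p$ group), so $\varphi$ induces a well-defined $\F_p$-linear map $\bar\varphi : V \to V$, $\bar\varphi(g\Phi(G))=\varphi(g)\Phi(G)$.

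Next I would translate the hypothesis into a statement about $\bar\varphi$. Using that the primitive elements of $G$ are exactly the elements of $G \setminus \Phi(G)$, the assumption says: for every $g \notin \Phi(G)$ we have $\varphi(g) \notin \Phi(G)$. In terms of the quotient, this reads precisely $\bar\varphi(\bar g) \neq 0$ whenever $\bar g \neq 0$, i.e. $\ker(\bar\varphi)=0$, so $\bar\varphi$ is injective. Because $V$ is finite-dimensional (here is where finite generation of $G$ enters), an injective $\F_p$-linear endomorphism of $V$ is automatically bijective, so $\bar\varphi$ is an isomorphism.

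Finally I would upgrade surjectivity on $V$ to surjectivity of $\varphi$ itself by a Frattini argument, and then invoke the Hopfian property to conclude. Surjectivity of $\bar\varphi$ means $\varphi(G)\Phi(G)=G$; since $G$ is compact and $\varphi$ continuous, $\varphi(G)$ is a closed subgroup, and as $\Phi(G)$ consists of non-generators this forces $\varphi(G)=G$. Thus $\varphi$ is a surjective continuous endomorphism of a finitely generated profinite group, hence an isomorphism by the Hopfian property (exactly as used in Lemma~\ref{varphi-retract} and Theorem~\ref{test-retract}). I do not expect a genuine obstacle here: the only points requiring care are the well-definedness of $\bar\varphi$ (i.e. $\varphi(\Phi(G))\subseteq\Phi(G)$) and the observation that closedness of $\varphi(G)$ is what lets the Frattini argument go through; both are routine once stated.
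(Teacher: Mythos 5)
Your proof is correct and is essentially the paper's own argument: both pass to the induced map $\bar{\varphi}$ on $G/\Phi(G)$, translate the hypothesis into injectivity of $\bar{\varphi}$, use finite-dimensionality of this $\mathbb{F}_p$-vector space to conclude $\bar{\varphi}$ is bijective, and then recover that $\varphi$ itself is an isomorphism. The only difference is one of detail: you make explicit the well-definedness of $\bar{\varphi}$ and the final Frattini-plus-Hopfian step, both of which the paper compresses into ``Hence, $\varphi$ is an isomorphism.''
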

\begin{proof}
Suppose that $\varphi$ maps primitive elements to primitive elements. So, $x \notin \Phi(G)$ implies $\varphi(x) \notin \Phi(G)$.
Therefore, the induced homomorphism $\bar{\varphi}: G/{\Phi(G)} \to G/{\Phi(G)}, \bar{\varphi}(x\Phi(G))=\varphi(x) \Phi(G)$, is injective.  
Since $G/{\Phi(G)}$ is a finite dimensional $\mathbb{F}_p$-vector space, it  follows that $\bar{\varphi}$  is an isomorphism. Hence, $\varphi$ is an isomorphism.
\end{proof}
\begin{definition}
Let $G$ be a profinite group. The automorphic orbit of an element $g \in G$ is defined by
\[\textrm{Orb}(g) = \{\alpha(g) \mid \alpha \in \textrm{Aut}(G) \}.\]
\end{definition}  

\begin{thm}
\label{orbit}
Let $G$ be a finitely generated pro-$p$ group, and suppose that $\textrm{Aut}(G)$ acts transitively on $G / \Phi(G)$. Let $1 \neq u \in G$.
If $\varphi$ is an endomorphism of $G$ such that $\varphi(\textrm{Orb}(u)) \subseteq \textrm{Orb}(u)$, then $\varphi$ is an automorphism. 
\end{thm}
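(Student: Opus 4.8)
The plan is to show that $\varphi$ maps primitive elements to primitive elements; by Lemma~\ref{orbit-primitive} this already forces $\varphi$ to be an automorphism. Equivalently, writing $V=G/\Phi(G)$ and $\bar\varphi\colon V\to V$ for the induced $\mathbb{F}_p$-linear map (well defined since $\varphi(\Phi(G))\subseteq\Phi(G)$), it suffices to prove that $\bar\varphi$ is injective: then $\bar\varphi$ is bijective as $V$ is finite dimensional, so $\varphi(G)\Phi(G)=G$, whence $\varphi(G)=G$ by the Frattini property and $\varphi$ is an automorphism by the Hopfian property of finitely generated profinite groups. Throughout I will use that every retract $C$ of $G$ satisfies $C\cap\Phi(G)=\Phi(C)$ (the computation in the proof of Corollary~\ref{test-free} uses only that the retraction fixes $C$), so that $\dim\big(C\Phi(G)/\Phi(G)\big)=d(C)$.

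The engine of the proof is the minimal retract together with the transitivity hypothesis. By Lemma~\ref{retract-intersection}$(c)$, since $u\neq 1$, there is a retract $M$ of $G$ that is minimal among retracts containing $u$. Minimality and Lemma~\ref{retract-intersection}$(a)$ show that $u$ lies in no proper retract of $M$, so by Theorem~\ref{test-retract} applied to $M$, the element $u$ is a \emph{test element of $M$}. Moreover $M\not\subseteq\Phi(G)$: otherwise $M=M\cap\Phi(G)=\Phi(M)$, forcing $M=1$ and $u=1$. Hence $\bar M:=M\Phi(G)/\Phi(G)$ is a nonzero subspace of $V$. For every $\alpha\in\textrm{Aut}(G)$ the subgroup $\alpha(M)$ is a minimal retract containing the test element $\alpha(u)$.

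The hard part will be to show that for each $\alpha\in\textrm{Aut}(G)$ the restriction of $\bar\varphi$ to $\bar\alpha(\bar M):=\alpha(M)\Phi(G)/\Phi(G)$ is injective. Fix $\alpha$, set $A=\alpha(M)$ and $a=\alpha(u)$. By hypothesis $\varphi(a)\in\textrm{Orb}(u)$, say $\varphi(a)=\delta(u)$ with $\delta\in\textrm{Aut}(G)$; then $B:=\delta(M)$ is a retract of $G$, minimal among those containing the test element $\varphi(a)$, with a retraction $r_B\colon G\to B$. The key trick is to feed the composite back into $A$: the endomorphism $\psi:=\alpha\circ\delta^{-1}\circ r_B\circ\varphi|_A$ of $A$ sends $a\mapsto\alpha(\delta^{-1}(r_B(\varphi(a))))=\alpha(\delta^{-1}(\delta(u)))=a$, so $\psi$ fixes the test element $a$ of $A$ and is therefore an automorphism of $A$. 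Since $\alpha\circ\delta^{-1}$ is an isomorphism, it follows that $r_B\circ\varphi|_A\colon A\to B$ is an isomorphism; in particular $\varphi|_A$ is injective, and $r_B$ restricts to a continuous surjection $\varphi(A)\to B$ between isomorphic finitely generated profinite groups, hence (Hopfian property) an isomorphism. Consequently $\varphi(A)$ is itself a retract of $G$ isomorphic to $A$, so $\dim\big(\varphi(A)\Phi(G)/\Phi(G)\big)=d(\varphi(A))=d(A)=\dim\bar A$; as $\varphi(A)\Phi(G)/\Phi(G)=\bar\varphi(\bar A)$, the map $\bar\varphi$ is injective on $\bar A=\bar\alpha(\bar M)$.

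Finally I would conclude by a transitivity (covering) argument. The union $\bigcup_{\alpha\in\textrm{Aut}(G)}\bar\alpha(\bar M)$ is invariant under the image of $\textrm{Aut}(G)$ in $\textrm{GL}(V)$ and contains the nonzero subspace $\bar M$; since $\textrm{Aut}(G)$ acts transitively on $V\setminus\{0\}$, this union contains every nonzero vector of $V$. Thus any nonzero $v\in V$ lies in some $\bar\alpha(\bar M)$, on which $\bar\varphi$ is injective by the previous paragraph, so $\bar\varphi(v)\neq 0$. Hence $\ker\bar\varphi=0$, and by the first paragraph $\varphi$ is an automorphism. I expect the third paragraph — manufacturing, for every $\alpha$, an automorphism of $\alpha(M)$ out of $\varphi$ and the two minimal retracts, and upgrading injectivity on subgroups to injectivity of $\bar\varphi$ on the associated Frattini subspaces — to be the main obstacle; the covering step is then routine. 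Note that this approach treats the cases $u\notin\Phi(G)$ and $u\in\Phi(G)$ uniformly, which is why Lemma~\ref{retract-intersection} is the crucial input.
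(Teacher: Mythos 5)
Your proof is correct and follows essentially the same route as the paper's: the minimal retract $M$ over $\langle u\rangle$ from Lemma~\ref{retract-intersection}$(c)$, on which $u$ is a test element by Theorem~\ref{test-retract}, the trick of composing $\varphi$ restricted to an automorphic image of $M$ with a retraction and automorphisms so that the composite fixes the test element (hence is an automorphism of that retract), and the transitivity hypothesis to propagate injectivity to all of $G/\Phi(G)$, concluding via Lemma~\ref{orbit-primitive}. The only divergence is cosmetic: where the paper tracks primitive elements pointwise (showing $\varphi(x)$ stays outside $\Phi(G)$ for primitive $x$ in the translated retract), you show $\varphi(\alpha(M))$ is again a retract (via the Hopfian property) and finish with a dimension count on the subspaces $\bar\alpha(\bar M)$ covering $G/\Phi(G)\setminus\{0\}$ --- a slightly longer but equivalent ending.
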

\begin{proof}
Suppose that $\varphi$ is an endomorphism of $G$ satisfying  $\varphi(\textrm{Orb}(u)) \subseteq \textrm{Orb}(u)$. By Lemma~\ref{retract-intersection} $(c)$, there exists a retract $H$ of $G$ minimal among retracts of $G$ that contain $\langle u \rangle$. 
It follows from Lemma~\ref{retract-intersection} $(a)$ that $u$ is not contained in any proper retract of $H$. Being a retract of a finitely generated group, $H$ is finitely generated. 
Hence, by Theorem~\ref{test-retract}, $u$ is a test element of $H$. 

Let $r:G \to H$ be a retraction. Since $\varphi(\textrm{Orb}(u)) \subseteq \textrm{Orb}(u)$, there exists $\alpha \in \textrm{Aut}(G)$ such that $\varphi(u) = \alpha(u)$.  Consider the homomorphism $\beta : H\to H $ defined by $\beta: = {r \circ ({\alpha}^{-1}\circ \varphi)}_{| H}$. We have $$\beta(u) = r(\alpha^{-1}(\varphi(u))) = r(\alpha^{-1}(\alpha(u))) = r(u) = u.$$ Since $u$ is a test element of $H$, $\beta$ is an isomorphism. Let $x \in H$ be a primitive element of $H$. 
Since $H$ is a retract of $G$, $\Phi(H)=H \cap \Phi(G)$ (see the proof of Corollary~\ref{test-free}). Hence, $x \notin \Phi(G)$ and thus $x$ is a primitive element of $G$.
Furthermore, $\beta(x)$ is a primitive element of $H$ and $(\alpha^{-1}\circ \varphi)(x)$ is a primitive element of $G$. Indeed, $(\alpha^{-1}\circ \varphi)(x) \in \Phi(G)$ implies $\beta(x)=r((\alpha^{-1}\circ \varphi)(x)) \in \Phi(H)$, a contradiction with the primitivity of $\beta(x)$.
As $\alpha$ is an automorphism of $G$, it follows that $\varphi(x) = \alpha((\alpha^{-1}\circ \varphi)(x))$ is a primitive element of $G$.

Now let $y\in G$ be an arbitrary primitive element of $G$. Since $\textrm{Aut}(G)$ acts transitively on $G / \Phi(G)$, there is 
$\gamma \in \textrm{Aut}(G)$ such that $\gamma(x) y^{-1} \in \Phi(G)$. Observe that $\gamma(H)$ is a retract of $G$ and it is minimal among retracts of $G$ that contain $\langle \gamma(u) \rangle$. Moreover, $\gamma(x)$ is a primitive element of $\gamma(H)$ . 
Hence, we can repeat the argument from the previous paragraph with $u$ replaced by $\gamma(u)$ ($\textrm{Orb}(\gamma(u))=\textrm{Orb}(u)$) and conclude that $\varphi(\gamma(x))$ is a primitive element of $G$. 
From $\varphi(\gamma(x))\varphi(y)^{-1}=\varphi(\gamma(x)y^{-1}) \in \Phi(G)$, we get that $\varphi(y)$ is a primitive element of $G$. 
Hence, $\varphi$ maps primitive elements to primitive elements.  By Lemma~\ref{orbit-primitive}, $\varphi$ is an isomorphism.
\end{proof}

\begin{cor}
\label{orbit-free}
Let $F$ be a relatively free pro-$p$ group of finite rank, and let $1 \neq u \in F$.
If $\varphi$ is an endomorphism of $F$ such that $\varphi(\textrm{Orb}(u)) \subseteq \textrm{Orb}(u)$, then $\varphi$ is an automorphism. 
In particular, this statement is true for free pro-$p$ groups of finite rank.
\end{cor}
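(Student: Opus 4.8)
The plan is to deduce the corollary from Theorem~\ref{orbit} by checking its only nontrivial hypothesis, namely that $\textrm{Aut}(F)$ acts transitively on $F/\Phi(F)$. A relatively free pro-$p$ group of finite rank is finitely generated, so once transitivity is established the conclusion follows immediately from Theorem~\ref{orbit}; the ``in particular'' clause is then a special case, since every free pro-$p$ group is relatively free.

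Since any automorphism fixes the trivial coset, transitivity is understood on the set of nonzero vectors of the finite-dimensional $\mathbb{F}_p$-vector space $F/\Phi(F)$, i.e.\ on the images of the primitive elements. I would prove this by showing that the natural homomorphism $\pi:\textrm{Aut}(F) \to \GL(F/\Phi(F))$, $\pi(\alpha)=\bar{\alpha}$, is surjective. Granting this, transitivity is automatic: writing $n=d(F)$, the group $\GL(F/\Phi(F)) \cong \GL_n(\mathbb{F}_p)$ acts transitively on the nonzero vectors, so its image, being all of $\GL(F/\Phi(F))$, does as well.

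To prove $\pi$ surjective, fix a basis $\{x_1, \ldots, x_n\}$ of $F$, so that $\{\bar{x}_1, \ldots, \bar{x}_n\}$ is an $\mathbb{F}_p$-basis of $F/\Phi(F)$. Given $A=(a_{ij}) \in \GL_n(\mathbb{F}_p)$, choose integer lifts $\tilde{a}_{ij} \in \{0, \ldots, p-1\}$ and invoke the universal property of the relatively free group $F$ to define an endomorphism $\varphi$ by $\varphi(x_j)=\prod_{i=1}^{n} x_i^{\tilde{a}_{ij}}$. By construction the induced map $\bar{\varphi}$ on $F/\Phi(F)$ is precisely $A$, hence invertible, so $\varphi(F)\Phi(F)=F$; as $\Phi(F)$ is the set of non-generators of $F$, this forces $\varphi(F)=F$. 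A surjective endomorphism of a finitely generated profinite group is an automorphism by the Hopfian property, so $\varphi \in \textrm{Aut}(F)$ and $\pi(\varphi)=A$, proving surjectivity.

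The main obstacle is exactly this surjectivity of $\pi$: relative freeness is what permits lifting an arbitrary linear map to an endomorphism, while the Hopfian property is what promotes the resulting Frattini-invertible endomorphism to a genuine automorphism. With transitivity in hand, Theorem~\ref{orbit} applies verbatim and yields the corollary.
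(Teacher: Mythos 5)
Your proof is correct and follows the same route as the paper: the paper's entire proof of Corollary~\ref{orbit-free} is the one-line observation that $\textrm{Aut}(F)$ acts transitively on $F/\Phi(F)$ (meaning on the nonzero cosets), so Theorem~\ref{orbit} applies. The only difference is that you supply the standard argument for this transitivity (lifting an arbitrary element of $\GL(F/\Phi(F))$ to an endomorphism via the universal property of relatively free groups, then using the Frattini and Hopfian properties), a fact the paper simply takes as known.
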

\begin{proof}
Since $\textrm{Aut}(F)$ acts transitively on $F/\Phi(F)$, Theorem~\ref{orbit} applies.
\end{proof}
We give an example of an endomorphism $\varphi$ of a finitely generated pro-$p$ group $G$ that is not an automorphism and for which there exists an element $1 \neq u \in G$ such that  $\varphi(\textrm{Orb}(u)) \subseteq \textrm{Orb}(u)$. 
Let $G = H \coprod C_p$ where $H=\langle x \rangle$ is an infinite procyclic pro-$p$ group and $C_p=\langle u \rangle$ is a cyclic group of order $p$.  
Define $\varphi: G \to G$ by $\varphi(x)=x^p$ and $\varphi(u)=u$. Let $\alpha \in \textrm{Aut}(G)$. Since all elements of finite order in $G$ belong to conjugates of $C_p$ (see \cite{Ribes2}), $\alpha(u)=wu^{k}w^{-1}$ for some $w \in G$ and 
$1 \leq k \leq p - 1$. 
Define $\beta_{k}:G \to G$ by $\beta_{k}(x)=x$ and $\beta_{k}(u)=u^{k}$. Clearly, $\beta_{k}$ is an automorphism of $G$ and
$\varphi(\alpha(u))=\varphi(w)u^{k}\varphi(w)^{-1}=(\rho_{\varphi(w)} \circ \beta_{k})(u)$ where $\rho_{\varphi(w)}$ denotes conjugation by $\varphi(w)$.

\section{Test elements of free pro-$p$ groups}
\label{test-free pro-p}
\subsection{$\mathcal{T}$-test arrangements}
\begin{pro}
\label{test-power}
Let $F$ be a free pro-$p$ group. If $u \in F$ is a test element of $F$, then so is $u^{\alpha}$ for every $0 \neq \alpha \in \mathbb{Z}_p$. 
\end{pro}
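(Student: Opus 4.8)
The plan is to reduce the assumption that an endomorphism fixes $u^{\alpha}$ to the stronger assumption that it fixes $u$ itself, and then invoke the hypothesis that $u$ is a test element. Concretely, let $\varphi : F \to F$ be an arbitrary (continuous) endomorphism with $\varphi(u^{\alpha}) = u^{\alpha}$; the goal is to show that $\varphi$ is an automorphism.

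First I would use that $\varphi$, being continuous, respects $\mathbb{Z}_p$-powers, so that $\varphi(u^{\alpha}) = \varphi(u)^{\alpha}$ and the fixed-point equation becomes $\varphi(u)^{\alpha} = u^{\alpha}$. The decisive step is then unique extraction of roots in free pro-$p$ groups, exactly as already applied in the proof of Proposition~\ref{isolated}: from $\varphi(u)^{\alpha} = u^{\alpha}$ with $0 \neq \alpha \in \mathbb{Z}_p$ one concludes $\varphi(u) = u$. Since $u$ is a test element of $F$, this forces $\varphi \in \textrm{Aut}(F)$, and therefore $u^{\alpha}$ is a test element.

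The only delicate point — the ``hard part'', such as it is — is to make sure that unique extraction of roots is available for $\mathbb{Z}_p$-exponents and not merely integer ones. This holds because $w := u^{\alpha} \neq 1$ (free pro-$p$ groups are torsion free and $\alpha \neq 0$), so both $u$ and $\varphi(u)$ centralize $w$ and hence lie in the procyclic group $C_F(w) \cong \mathbb{Z}_p$; as $\mathbb{Z}_p$ is an integral domain, $\alpha \neq 0$ is cancellable and $u = \varphi(u)$ follows. I would emphasize that this argument uses nothing about the rank of $F$, so it covers free pro-$p$ groups of arbitrary rank. In the finite-rank case one could instead argue through retracts: by Corollary~\ref{test-free} it suffices to show that $u^{\alpha}$ lies in no proper free factor, and if $u^{\alpha} \in H \leq_{\textrm{ff}} F$ then $H$ is isolated in $F$ by Proposition~\ref{isolated}, forcing $u \in H$ and contradicting that the test element $u$ lies in no proper free factor. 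I would present the direct argument as the main proof, since it is shorter and unrestricted in rank.
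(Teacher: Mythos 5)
Your proposal is correct and follows essentially the same route as the paper: the paper's proof likewise applies unique extraction of roots in free pro-$p$ groups to the equation $\varphi(u)^{\alpha}=u^{\alpha}$ to conclude $\varphi(u)=u$, and then invokes the hypothesis that $u$ is a test element. Your additional justification of root extraction via the procyclic centralizer $C_F(u^{\alpha})$, and the alternative finite-rank argument through Corollary~\ref{test-free} and Proposition~\ref{isolated}, are sound but not needed beyond what the paper already takes as known.
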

\begin{proof}
Suppose that $u$ is a test element of $F$, and let $0 \neq \alpha \in \mathbb{Z}_p$.
If $\varphi: F \to F$ is a homomorphism such that $\varphi(u^{\alpha}) = u^{\alpha}$, then by unique extraction of roots $\varphi(u) = u$. 
Hence, $\varphi$ is an automorphism and $u^{\alpha}$ is a test element of $F$. 
\end{proof}

We write $F(a_1, \ldots, a_n)$ for the free pro-$p$ group on the finite set $\{a_1, \ldots, a_n\}$; if there is no possibility of misunderstanding, we will also write $F$  for $F(a_1, \ldots, a_n)$.  
Let $G$ be a pro-$p$ group, $g_1, \ldots, g_n \in G$, and $w=w(x_1, \ldots, x_n)$ be an element in the free pro-$p$ group $F(x_1, \ldots, x_n)$. Then $w(g_1, \ldots, g_n) \in G$ denotes  the image of $w$
under the homomorphism $\varphi:F \to G$ defined by $\varphi(x_i)=g_i$ for all $1 \leq i \leq n$.

\begin{pro}
\label{coprod-test}
Let $F'=A_1 \coprod A_2 \coprod \ldots \coprod A_n$  be a free pro-$p$ group of finite rank.
For each $i =1, \ldots , n$, let $u_i \in A_i$ be a test element of $A_i$. 
 If $ w(x_1, \ldots , x_n)$ is a test element of the free pro-$p$ group $F(x_1, \ldots, x_n)$, then
$w(u_1, \ldots , u_n)$ is a test element of $F'$.
\end{pro}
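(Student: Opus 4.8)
The plan is to use the characterization of test elements in free pro-$p$ groups of finite rank given by Corollary~\ref{test-free}: an element is a test element of $F'$ if and only if it lies in no proper free factor of $F'$. Writing $v=w(u_1,\ldots,u_n)$, it therefore suffices to show that whenever $M$ is a free factor of $F'$ with $v\in M$, one has $M=F'$.

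First I would identify the subgroup generated by the $u_i$. Since each $A_i$ is a free factor of the free pro-$p$ group $F'$, it is itself free pro-$p$, hence torsion free, so each nontrivial $u_i$ generates a procyclic subgroup $\langle u_i\rangle\cong\mathbb{Z}_p$. Using the standard fact that inclusions of closed subgroups of the factors of a free pro-$p$ product induce an embedding of the corresponding free pro-$p$ product, the subgroup $B:=\langle u_1,\ldots,u_n\rangle\leq F'$ equals $\langle u_1\rangle\coprod\cdots\coprod\langle u_n\rangle$, a free pro-$p$ group of rank $n$ freely generated by $u_1,\ldots,u_n$. Consequently the assignment $x_i\mapsto u_i$ defines an isomorphism $\theta\colon F(x_1,\ldots,x_n)\to B$ carrying $w$ to $v$; since $w$ is a test element of $F(x_1,\ldots,x_n)$, $v$ is a test element of $B$.

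The heart of the argument is then two applications of Claim 1 from the proof of Proposition~\ref{freefactors-intersection} (if $H\leq_{\textrm{ff}}K$ then $H\cap N\leq_{\textrm{ff}}K\cap N$ for every $N$), carried out inside the free pro-$p$ group $F'$ with $H=M$ and $K=F'$. Applying it with $N=B$ gives $M\cap B\leq_{\textrm{ff}}B$; as $v\in M\cap B$ and $v$ lies in no proper free factor of $B$ (Corollary~\ref{test-free} applied to $B$), we conclude $M\cap B=B$, so $u_i\in M$ for all $i$. Applying it again with $N=A_i$ gives $M\cap A_i\leq_{\textrm{ff}}A_i$; since $u_i\in M\cap A_i$ and $u_i$ is a test element of $A_i$, hence in no proper free factor of $A_i$, we get $M\cap A_i=A_i$, that is, $A_i\subseteq M$. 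As this holds for every $i$ and the $A_i$ generate $F'$, we obtain $M=F'$, which finishes the proof by Corollary~\ref{test-free}.

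The step I expect to require the most care is the identification of $B$ with $F(x_1,\ldots,x_n)$: one must be sure that $\langle u_1,\ldots,u_n\rangle$ is genuinely the free pro-$p$ product $\coprod_i\langle u_i\rangle$, so that $\theta$ is injective and $v$ inherits the test-element property of $w$. This rests on the behavior of free pro-$p$ products under inclusions of closed subgroups of the factors. Once this is secured, the remainder is a clean and purely formal consequence of Claim 1 together with the free-factor characterization of test elements.
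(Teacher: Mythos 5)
Your proof is correct and takes essentially the same route as the paper's: the paper forms the algebraic closure $C$ of $\langle w(u_1,\ldots,u_n)\rangle$ in $F'$ (the intersection of all free factors containing it) and applies Proposition~\ref{freefactors-intersection} first to $K\cap C$ and then to $A_i\cap C$, which is exactly your two applications of Claim~1 carried out on an arbitrary free factor $M$ containing the element rather than on their intersection --- a purely cosmetic difference. Your extra care in justifying that $\langle u_1,\ldots,u_n\rangle$ is the free pro-$p$ product $\langle u_1\rangle\coprod\cdots\coprod\langle u_n\rangle$ (a step the paper dismisses with ``clearly'') is sound and welcome.
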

\begin{proof}
Let $ w(x_1, \ldots , x_n)$ be a test element of the free pro-$p$ group $F(x_1, \ldots, x_n)$.
Clearly, $K= \langle u_1,  \ldots , u_n \rangle$ is a free pro-$p$ group with basis  $\{u_1, \dots, u_n\}$ and $t=w(u_1, \ldots, u_n) \in K$ is a test element of $K$.
Let $C$ be the algebraic closure of $\langle t \rangle$ in $F'$. By Proposition~\ref{freefactors-intersection}, $K \cap C$ is a free factor of $K$.
Since $t \in  K \cap C$ is a test element of $K$, we have $ K \cap C=K$. Consequently, $u_i \in C$ for all $1 \leq i \leq n$. Moreover, for each $1 \leq i \leq n$, $A_i \cap C \leq_{\textrm{ff}} A_i$, and 
since $u_i \in A_i \cap C$ is a test element of $A_i$, it follows that $A_i \leq C$. Hence, $C=F'$ and by Corollary~\ref{test-free}, $t$ is a test element of $F'$. 
\end{proof}

\begin{rem}
Note that  in the proof of Proposition~\ref{coprod-test} we only used the fact that $F'$ is generated by the subgroups $A_1, \ldots, A_n$ and that $u_1, \ldots, u_n$ form a basis of the subgroup they generate.   
\end{rem}

For each $n \geq 1$, let $\mathcal{T}_n$ be a set (possibly empty) of test elements of the free pro-$p$ group $F(x_1, \ldots, x_n)$;
set $\mathcal{T}=\bigcup_{n \geq 1}\mathcal{T}_n$. We define the notion of a \emph{$\mathcal{T}$- test arrangement} recursively:
\begin{enumerate}[i)]
\item every element in $\mathcal{T}_n$ is a $\mathcal{T}$- test arrangement of weight $n$;
\item given $w \in \mathcal{T}_n$ and $\mathcal{T}$-test arrangements $u_i$ of respective weights $k_i, 1 \leq i \leq n$, then 
$w(u_1, \ldots, u_n)$ is a $\mathcal{T}$-test arrangement of weight $k_1 +  \ldots + k_n$;
\item every $\mathcal{T}$-test arrangement is obtained from $i)$ and $ii)$ in a finite number of steps.
\end{enumerate}

In order to avoid any confusion, we emphasize that a $\mathcal{T}$-test arrangement is nothing but a special type of word in the following symbols: elements in  $\mathcal{T}$, comma, and the parenthesis `(' and `)'. 

Let $G$ be a pro-$p$ group, and let $w$ be a $\mathcal{T}$-test arrangement of weight $n$. Given elements $g_1, \ldots, g_n$ in $G$, we define 
$w(g_1, \ldots, g_n)$, recursively as follows:
\begin{enumerate}[i)]
\item if $w \in \mathcal{T}_n$, then $w(g_1, \ldots, g_n)$ was already defined;
\item if $w=v(u_1, \ldots, u_m)$ for some $v \in \mathcal{T}_m$ and $\mathcal{T}$-test arrangements $u_i$ of respective weights $k_i, 1 \leq i \leq m$, 
with $k_1 + \ldots + k_m=n$, then 
\[w(g_1, \ldots, g_n)=v(u_1(g_1, \ldots, g_{k_1}), \ldots, u_m(g_{k_1+\ldots+ k_{m-1}+1}, \ldots, g_n)).\]

\end{enumerate}

\begin{pro}
\label{test-arrangement}
Let $F(a_1, \ldots, a_n)$ be a free pro-$p$ group, and let $w$ be a $\mathcal{T}$-test arrangement of weight $n$.
Then $w(a_1, \ldots, a_n)$ is a test element of $F$.
\end{pro}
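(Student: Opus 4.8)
The plan is to argue by induction on the number of applications of rule ii) used in building the $\mathcal{T}$-test arrangement $w$, with the base case handled by the definition and the inductive step supplied verbatim by Proposition~\ref{coprod-test}. I induct on this ``complexity'' rather than on the weight, since in an expression $w=v(u_1,\ldots,u_m)$ one of the $u_i$ could again have weight $n$ (e.g.\ when $m=1$), whereas each $u_i$ always involves strictly fewer applications of rule ii) than $w$ does.

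For the base case, suppose $w \in \mathcal{T}_n$, so that $w$ uses no applications of rule ii). By clause i) of the definition of evaluation, $w(a_1,\ldots,a_n)$ is just the image of $w$ under the isomorphism $F(x_1,\ldots,x_n) \to F(a_1,\ldots,a_n)$ sending $x_i \mapsto a_i$. As $w$ is by hypothesis a test element of $F(x_1,\ldots,x_n)$ and the property of being a test element is preserved by isomorphisms, $w(a_1,\ldots,a_n)$ is a test element of $F$.

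For the inductive step, write $w=v(u_1,\ldots,u_m)$ with $v \in \mathcal{T}_m$ and each $u_i$ a $\mathcal{T}$-test arrangement of weight $k_i$, where $k_1+\ldots+k_m=n$. Put $s_0=0$ and $s_i=k_1+\ldots+k_i$, and split the basis of $F$ into consecutive blocks by setting $A_i=\langle a_{s_{i-1}+1},\ldots,a_{s_i}\rangle$ for $1 \leq i \leq m$. Since partitioning a basis of a free pro-$p$ group into disjoint subsets yields a free product decomposition, $F=A_1 \coprod \cdots \coprod A_m$, and each $A_i$ is itself free pro-$p$ of rank $k_i$ on the displayed block. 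The induction hypothesis applies to each $u_i$ (a test arrangement of weight $k_i$ with strictly fewer applications of rule ii) than $w$), giving that $t_i:=u_i(a_{s_{i-1}+1},\ldots,a_{s_i})$ is a test element of $A_i$. Now every hypothesis of Proposition~\ref{coprod-test} holds: $F=A_1 \coprod \cdots \coprod A_m$ is a free pro-$p$ group of finite rank, $t_i \in A_i$ is a test element of $A_i$ for each $i$, and $v$ is a test element of $F(x_1,\ldots,x_m)$. Hence $v(t_1,\ldots,t_m)$ is a test element of $F$, and unwinding clause ii) of the definition of evaluation gives $w(a_1,\ldots,a_n)=v(t_1,\ldots,t_m)$, completing the induction.

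I do not expect a serious obstacle: the proposition is essentially a repackaging of Proposition~\ref{coprod-test} in the recursive language of test arrangements, so the only real care needed is the index bookkeeping that aligns the block decomposition of the basis with the recursive evaluation rule. The one point I would state explicitly is that splitting the basis into disjoint blocks produces a genuine free product decomposition $F=A_1 \coprod \cdots \coprod A_m$ with each $A_i$ free pro-$p$ on its block, since this is exactly what allows Proposition~\ref{coprod-test} to be invoked without modification.
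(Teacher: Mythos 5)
Your proof is correct and follows essentially the same route as the paper's: structural induction on the test arrangement, splitting the basis into consecutive blocks $A_i$, and invoking Proposition~\ref{coprod-test} for the inductive step. Your explicit choice to induct on the number of applications of rule ii) (rather than on the weight) is a careful articulation of the structural induction the paper leaves implicit, and correctly handles the case $m=1$ where the weight does not decrease.
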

\begin{proof}
If $w \in \mathcal{T}_n$, then $w(a_1, \ldots, a_n)$ is clearly a test element of $F$.
Assume that $w=v(u_1, \ldots, u_m)$ for some $v \in \mathcal{T}_m$ and $\mathcal{T}$-test arrangements $u_i$ of respective weights $k_i, 1 \leq i \leq m$.
For $i=1, \ldots, m$, let  $A_i=\langle a_{k_1 + \ldots + k_{i-1}+1}, \ldots, a_{k_1+ \ldots + k_i} \rangle$.
Clearly, $F =A_1 \coprod \ldots \coprod A_m$. By induction, $u_i(a_{k_1 + \ldots + k_{i-1}+1}, \ldots, a_{k_1+ \ldots + k_i})$ is a test element of $A_i$ for all $1 \leq i \leq m$.
It follows from Proposition~\ref{coprod-test} that $w(a_1, \ldots, a_n)$ is a test element of $F$.
\end{proof}

For each $n \geq 1$, let $\mathbb{T}_n$ be the set of all test elements of $F(x_1, \ldots, x_n)$, and set $\mathbb{T} = \bigcup_{n \geq 1} \mathbb{T}_n$. 
We employ Proposition~\ref{test-arrangement} to turn $\mathbb{T}$ into a (graded) universal algebra with signature $\mathbb{T}$.
The $n$-ary operation $\alpha_{w}:\mathbb{T}^n \to \mathbb{T}$ corresponding to $w \in \mathbb{T}_n$ is defined as follows: let $u_i \in \mathbb{T}_{k_i}$ for $1 \leq i \leq n$, and set
$m=k_1 + \ldots + k_n$; then $\alpha_w(u_1, \ldots, u_n)$ is the test element $w(u_1, \ldots, u_n)(x_1, \ldots, x_m) \in \mathbb{T}_m$ of $F(x_1, \ldots, x_m)$.
However,  in order to turn this observation into a practical tool for discovering test elements of free pro-$p$ groups, it is helpful to consider $\mathbb{T}$ as a universal algebra with a smaller signature $\mathcal{T} \subset \mathbb{T}$ 
consisting of test elements that are already known. Then new test elements are obtained from the subalgebra generated by $\mathcal{T}$:
\[\mathcal{T}'=\langle \mathcal{T} \rangle_{\mathcal{T}} = \{ w(x_1, \ldots, x_n) \mid n \geq 1, w \text{ is a } \mathcal{T} \text{-test arrangement of weight } n \}.\]  
Any hope that one may use this method recursively is shattered by the simple observation that $\langle \mathcal{T}' \rangle_{\mathcal{T}'}=\mathcal{T}'$. 
               
The above discussion prompts the following definition.

\begin{definition}
Let $w \in F(x_1, \ldots, x_n)$ be a test element of F. The rank of $w$ is defined by
\[rank(w)=min\{m \in \mathbb{N} \mid w \in \langle \mathcal{T} \rangle_{\mathcal{T}} \text{ where } \mathcal{T}=\mathbb{T}_1 \cup \ldots \cup \mathbb{T}_m \}\] 
\end{definition}

Observe that every nontrivial element in the free pro-$p$ group of rank one (i.e. $\mathbb{Z}_p$) is a test element. (These are the only test elements of rank one.)  
However, all test elements of a free pro-$p$ group of rank greater than one are contained in the Frattini subgroup.
Let $F$ be a free pro-$p$ group of rank two. It follows from Corollary~\ref{test-free} that $w \in F$ is a test element of $F$ if and only if it is not a power (with exponent in $\mathbb{Z}_p$) of a primitive element.
An immediate consequence is that every nontrivial element in the commutator subgroup of $F$ is a test element. Indeed, the image under the quotient homomorphism onto the abelianization 
$F / [F,F]$ of a nonzero power of a primitive element is nontrivial. 
    
Next we give an alternative description of test elements of the free pro-$p$ group of rank two; to that end we introduce some notation. 
Let $F(x_1, \ldots, x_n)$ be a free pro-$p$ group. For each $i = 1, \ldots, n$, we define a homomorphism $\sigma_{x_i}:F \to \mathbb{Z}_p$ by
$\sigma_{x_i}(x_j)=\delta_{ij}$ ($\delta_{ij}$ is the Kronecker delta). If we think of an element $w \in F$ as a word (finite or infinite), then
$\sigma_{x_i}(w)$ can be described as the sum of the exponents of all occurrences of $x_i$ in $w$.  
\begin{pro}
\label{rank2}
Let $F(x_1,x_2)$ be a free pro-$p$ group of rank two, and let $1 \neq w\in F$.
Suppose that $w$ is not a $p$-th power of another element in $F$. Then
$w$ is a test element of $F$ if and only if $\sigma_{x_1}(w), \sigma_{x_2}(w) \in p\mathbb{Z}_p$.
\end{pro}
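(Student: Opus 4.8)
The plan is to use Corollary~\ref{test-free}, which tells us that for a free pro-$p$ group $F$ of rank two, an element $w$ is a test element if and only if it is not contained in a proper free factor of $F$. Since $F$ has rank two, the proper free factors of $F$ are exactly the procyclic subgroups generated by primitive elements (together with the trivial subgroup). So I would reduce the statement to the following: under the hypothesis that $w$ is not a $p$-th power, $w$ fails to be a test element if and only if $w=u^{\alpha}$ for some primitive element $u$ and some $0 \neq \alpha \in \mathbb{Z}_p$; and I would show this latter condition is equivalent to one of $\sigma_{x_1}(w), \sigma_{x_2}(w)$ lying outside $p\mathbb{Z}_p$.

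**For the easier direction** (if one of the $\sigma$-values is a unit, then $w$ is not a test element), I would argue as follows. The maps $\sigma_{x_1}, \sigma_{x_2}$ together induce the abelianization map $F \to F/\overline{[F,F]} \cong \mathbb{Z}_p^2$, and composing with reduction mod $p$ gives the quotient $F \to F/\Phi(F) \cong \mathbb{F}_p^2$. A primitive element of $F$ is precisely one whose image in $F/\Phi(F)$ is nonzero, i.e., one of whose $\sigma$-values is a unit in $\mathbb{Z}_p$ (equivalently nonzero mod $p$). So if say $\sigma_{x_1}(w) \in \mathbb{Z}_p^{\times}$, then $w$ is itself primitive, hence generates a proper free factor, hence (by Corollary~\ref{test-free}) is not a test element. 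This handles the contrapositive of one implication and does not even need the ``not a $p$-th power'' hypothesis.

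**The harder direction** is the converse: assuming $\sigma_{x_1}(w), \sigma_{x_2}(w) \in p\mathbb{Z}_p$ and that $w$ is not a $p$-th power, I must show $w$ is a test element, i.e., $w$ lies in no proper free factor. The only proper nontrivial free factors are the $\langle u \rangle$ with $u$ primitive. Suppose for contradiction that $w = u^{\alpha}$ with $u$ primitive and $\alpha \in \mathbb{Z}_p$, $\alpha \neq 0$. Applying $\sigma_{x_i}$ gives $\sigma_{x_i}(w) = \alpha \, \sigma_{x_i}(u)$ for $i=1,2$. Since $u$ is primitive, at least one $\sigma_{x_i}(u)$ is a unit, so from $\sigma_{x_i}(w) \in p\mathbb{Z}_p$ we deduce $\alpha \in p\mathbb{Z}_p$. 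Writing $\alpha = p\beta$ with $\beta \in \mathbb{Z}_p$, we get $w = (u^{\beta})^p$, contradicting the hypothesis that $w$ is not a $p$-th power. Hence no such $u$ exists, $w$ lies in no proper free factor, and by Corollary~\ref{test-free}, $w$ is a test element.

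**The step I expect to require the most care** is the clean identification of the proper free factors of a rank-two free pro-$p$ group with the procyclic groups generated by primitive elements, and the accompanying claim that a primitive element is exactly one with a unit $\sigma$-value; both rest on the fact (used throughout the paper, cf.\ the proof of Corollary~\ref{test-free}) that $H \leq_{\mathrm{ff}} F$ iff $\Phi(F) \cap H = \Phi(H)$, together with the observation that a rank-one free factor must be generated by an element of $F \setminus \Phi(F)$. The multiplicativity $\sigma_{x_i}(u^{\alpha}) = \alpha\,\sigma_{x_i}(u)$ is immediate since $\sigma_{x_i}$ is a continuous homomorphism into the abelian group $\mathbb{Z}_p$, and unique extraction of roots in free pro-$p$ groups (invoked in Proposition~\ref{isolated} and Proposition~\ref{test-power}) underlies the passage from $\alpha \in p\mathbb{Z}_p$ to $w$ being a $p$-th power.
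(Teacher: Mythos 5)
Your proposal is correct and follows essentially the same route as the paper: both directions rest on Corollary~\ref{test-free}, the identification of proper free factors of a rank-two free pro-$p$ group with procyclic subgroups generated by primitive elements, and the computation $\sigma_{x_i}(u^{\alpha})=\alpha\,\sigma_{x_i}(u)$, with your harder direction matching the paper's almost verbatim (the paper phrases it as $\beta \notin p\mathbb{Z}_p$ forcing some $\sigma_{x_i}(w)$ to be a unit, you phrase it as $\alpha \in p\mathbb{Z}_p$ contradicting the non-$p$-th-power hypothesis). The only cosmetic difference is in the easy direction, where the paper constructs an explicit non-surjective endomorphism fixing $w$ (namely $x_1 \mapsto w^{\alpha^{-1}}$, $x_2 \mapsto 1$ with $\alpha=\sigma_{x_1}(w)$) instead of observing, as you do, that $w$ is then primitive and hence lies in the proper free factor $\langle w \rangle$.
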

\begin{proof}
Assume that $\alpha=\sigma_{x_1}(w) \notin p\mathbb{Z}_p$. Then $\alpha$ is a unit in $\mathbb{Z}_p$.
Define $\varphi:F \to F$ by $\varphi(x_1)=w^{\alpha^{-1}}$ and $\varphi(x_2)=1$. Clearly, $\varphi(F)=\langle w \rangle \neq F$ and 
$\varphi(w)=(w^{\alpha^{-1}})^{\alpha}=w$. Hence, $w$ is not a test element of $F$.

Now suppose that $w$ is not a test element of $F$. It follows from Corollary~\ref{test-free} that $w$ is contained in a free factor of $F$, necessarily of rank one. 
Therefore, $w=y^{\beta}$ for some primitive element $y \in F$ and some $\beta \in \mathbb{Z}_p$. Since $w$ is not a $p$-th power, we have $\beta \in \mathbb{Z}_p \setminus p\mathbb{Z}_p$.
Moreover, $y \notin \Phi(F)$ implies that either $\sigma_{x_1}(y) \notin p\mathbb{Z}_p$ or $\sigma_{x_2}(y) \notin p\mathbb{Z}_p$. Consequently, either
$\sigma_{x_1}(w)=\sigma_{x_1}(y^{\beta})=\beta \sigma_{x_1}(y) \notin p\mathbb{Z}_p$ or $\sigma_{x_2}(w)=\beta \sigma_{x_2}(y) \notin  p\mathbb{Z}_p$.
\end{proof}

To illustrate the importance of  Proposition~\ref{rank2}, we will prove that $t=x_1^{\alpha_1}x_2^{\alpha_2}$, $ \alpha_1, \alpha_2 \in p \mathbb{Z}_p \setminus \{0\},$  is a test element of $F(x_1, x_2)$ (see also Proposition~\ref{almost_primitive - examples} $(d)$).
For $i=1,2$, $\alpha_i=p^{n_i}\beta_i$ for some $n_i \in \mathbb{N}$ and $\beta_i \in \mathbb{Z}_p \setminus p\mathbb{Z}_p$. Note that $\{x_1^{\beta_1}, x_2^{\beta_2}\}$ is a basis of $F$.
Therefore, in order to show that $t$ is not a $p$-th power in $F$, it suffices to observe that $t$ is not a $p$-th power in the free discrete group generated by  $\{x_1^{\beta_1}, x_2^{\beta_2}\}$ (see \cite[Proposition~2]{Baumslag}). Since  $\sigma_{x_i}(t)=\alpha_i \in p \mathbb{Z}_p$ for $i=1,2$, it follows from Proposition~\ref{rank2} that $t$ is a test element of $F$.

 Now set $\mathcal{T}_2=\{x_1^{\alpha_1}x_2^{\alpha_2} \mid \alpha_1, \alpha_2 \in p \mathbb{Z}_p \setminus \{0\}\}$ 
and $\mathcal{T}_n = \emptyset$ for $n \neq 2$. Then $\langle \mathcal{T} \rangle_{\mathcal{T}}$ contains many new test elements of free pro-$p$ groups.
For example, for all $\alpha_i, \beta_i, \gamma_i \in p \mathbb{Z}_p \setminus \{0\}$, $1 \leq i \leq 2$, $(x_1^{\alpha_1}x_2^{\alpha_2})^{\beta_1}x_3^{\beta_2}$ and $x_1^{\beta_1}(x_2^{\alpha_1}x_3^{\alpha_2})^{\beta_2}$  are test element of $F(x_1, x_2, x_3)$, and 
$(x_1^{\alpha_1}x_2^{\alpha_2})^{\gamma_1}(x_3^{\beta_1}x_4^{\beta_2})^{\gamma_2}$ is a test element of $F(x_1, x_2, x_3, x_4)$.

For ease of reference, in the following two propositions we collect some immediate consequences of the results of this subsection. 
   
\begin{pro}
\label{test_powers}
Let $w$ be a test element of the free pro-$p$ group $F(x_1, \ldots, x_n)$.
For all $\alpha_1, ..., \alpha_n \in \mathbb{Z}_p \setminus {\{0 \}}$, $w(x_1^{\alpha_1}, ..., x_n^{\alpha_n})$ is also a test element of $F$.
\end{pro}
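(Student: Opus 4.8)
The plan is to obtain this statement as a direct specialization of Proposition~\ref{coprod-test}. First I would decompose the ambient group as a free pro-$p$ product $F(x_1, \ldots, x_n) = A_1 \coprod \ldots \coprod A_n$, where $A_i = \langle x_i \rangle$ is the procyclic subgroup generated by $x_i$. This decomposition is immediate from the universal property of the free pro-$p$ group on $\{x_1, \ldots, x_n\}$, and each $A_i$ is a free pro-$p$ group of rank one, i.e. a copy of $\mathbb{Z}_p$.

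Next I would check the hypotheses of Proposition~\ref{coprod-test} for the elements $u_i := x_i^{\alpha_i} \in A_i$. The point here is that $u_i$ must be a test element of $A_i$. Since $A_i \cong \mathbb{Z}_p$ is the free pro-$p$ group of rank one and $\alpha_i \neq 0$, the element $x_i^{\alpha_i}$ is nontrivial, and every nontrivial element of a rank-one free pro-$p$ group is a test element (alternatively, one may invoke Proposition~\ref{test-power} applied inside $A_i$, starting from the identity as a trivially test element). This is the only place where the hypothesis $\alpha_i \neq 0$ is used.

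Finally, with $w = w(x_1, \ldots, x_n)$ a test element of $F(x_1, \ldots, x_n)$ by assumption and each $u_i = x_i^{\alpha_i}$ a test element of $A_i$, I would apply Proposition~\ref{coprod-test} with $F' = F$ to conclude that $w(u_1, \ldots, u_n) = w(x_1^{\alpha_1}, \ldots, x_n^{\alpha_n})$ is a test element of $F'=F$, which is exactly the claim. Because the statement is a clean instantiation of machinery already established, I do not expect a genuine obstacle; the substantive content has been absorbed into Proposition~\ref{coprod-test} (and ultimately into the free-factor characterization of test elements in Corollary~\ref{test-free} and the intersection property of Proposition~\ref{freefactors-intersection}), so the only care needed is the routine verification that nontrivial powers in the rank-one factors are test elements.
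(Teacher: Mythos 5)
Your proof is correct and is essentially the paper's own argument: the paper formally routes the statement through the $\mathcal{T}$-test arrangement machinery (setting $\mathcal{T}_1=\{x_1^{\alpha_1}, \ldots, x_1^{\alpha_n}\}$, $\mathcal{T}_n=\{w\}$ and citing Proposition~\ref{test-arrangement}), but the proof of Proposition~\ref{test-arrangement} consists of exactly your steps --- the decomposition $F=\langle x_1 \rangle \coprod \cdots \coprod \langle x_n \rangle$, the observation that each $x_i^{\alpha_i}$ is a test element of the procyclic factor $\langle x_i \rangle \cong \mathbb{Z}_p$, and an application of Proposition~\ref{coprod-test}. One caveat on your parenthetical aside: the identity element is never a test element of $\mathbb{Z}_p$ (the trivial subgroup is a proper retract), so Proposition~\ref{test-power} would have to be applied starting from the generator $x_i$ rather than the identity; your primary justification (every nontrivial element of a rank-one free pro-$p$ group is a test element) is the correct one and is what the paper itself uses.
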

\begin{proof}
Set $\mathcal{T}_1=\{x_1^{\alpha_1}, \ldots ,  x_1^{\alpha_n}\}$, $\mathcal{T}_n =\{w\}$, and $\mathcal{T}_k=\emptyset$ for $k \notin \{1,n\}$.
Then $w(x_1^{\alpha_1}, \ldots , x_1^{\alpha_n})$ is a $\mathcal{T}$-test arrangement, and by Proposition~\ref{test-arrangement}, $w(x_1^{\alpha_1}, \ldots, x_n^{\alpha_n})=w(x_1^{\alpha_1}, \ldots , x_1^{\alpha_n})(x_1, \ldots, x_n)$ is a test element of $F$.
\end{proof}

\begin{pro}
\label{higher commutator}
Let $t = [x_1, ... , x_n]$ be any higher commutator in $F(x_1, \ldots, x_n)$ (that is, $t$ is a commutator of weight $n$ involving all $n$ letters $x_1, ..., x_n$ with arbitrary disposition of commutator brackets). Then $t$ is a test element of $F$. 
\end{pro}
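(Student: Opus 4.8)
The plan is to realize $t$ as a $\mathcal{T}$-test arrangement for a very small set $\mathcal{T}$ and then invoke Proposition~\ref{test-arrangement}. Concretely, I would take $\mathcal{T}_1 = \{x_1\} \subseteq F(x_1)$, $\mathcal{T}_2 = \{[x_1,x_2]\} \subseteq F(x_1,x_2)$, and $\mathcal{T}_n = \emptyset$ for $n \geq 3$, and set $\mathcal{T} = \bigcup_{n \geq 1}\mathcal{T}_n$. The first thing to check is that these two generators are genuinely test elements of the respective free pro-$p$ groups. For $x_1$ this is immediate, since $F(x_1) \cong \mathbb{Z}_p$ has rank one and every nontrivial element is a test element. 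For $[x_1,x_2]$ I would argue via Corollary~\ref{test-free}: a proper free factor of $F(x_1,x_2)$ has rank one, so if $[x_1,x_2]$ lay in such a factor $\langle y \rangle$ we would have $[x_1,x_2] = y^{\beta}$ with $y$ primitive and $\beta \neq 0$; passing to the abelianization $F/[F,F]$ gives $0 = \beta\bar{y}$ with $\bar{y} \neq 0$, a contradiction. (Alternatively, Proposition~\ref{rank2} applies, since $[x_1,x_2]$ is not a $p$-th power and $\sigma_{x_1}([x_1,x_2]) = \sigma_{x_2}([x_1,x_2]) = 0 \in p\mathbb{Z}_p$.)

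The core of the argument is the observation that \emph{every} higher commutator $[x_1,\ldots,x_n]$ is a $\mathcal{T}$-test arrangement of weight $n$ for this choice of $\mathcal{T}$. I would prove this by induction on $n$, following the bracket structure. If $n = 1$ then $t = x_1 \in \mathcal{T}_1$, and if $n = 2$ then $t = [x_1,x_2] \in \mathcal{T}_2$. For $n \geq 3$, the outermost bracket of $t$ writes it as $t = [u,v]$, where, reading the letters left to right in the prescribed order $x_1,\ldots,x_n$, the factor $u$ is a higher commutator in $x_1,\ldots,x_k$ of weight $k$ and $v$ is a higher commutator in $x_{k+1},\ldots,x_n$ of weight $n-k$, for some $1 \leq k \leq n-1$. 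By the inductive hypothesis $u$ and $v$ are $\mathcal{T}$-test arrangements of weights $k$ and $n-k$ respectively, so substituting them into the base arrangement $[x_1,x_2] \in \mathcal{T}_2$ according to rule ii) of the definition yields the $\mathcal{T}$-test arrangement $[u,v] = t$ of weight $n$. With this established, Proposition~\ref{test-arrangement} immediately gives that $t(a_1,\ldots,a_n)$ is a test element of $F$.

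The main point requiring care is the bookkeeping in the inductive step: the definition of a $\mathcal{T}$-test arrangement consumes the variables $a_1,\ldots,a_n$ in consecutive blocks, the first $k$ going to $u$ and the last $n-k$ going to $v$, so one must verify that the evaluation $[u,v](a_1,\ldots,a_n) = [\,u(a_1,\ldots,a_k),\, v(a_{k+1},\ldots,a_n)\,]$ really reproduces the given higher commutator. This is exactly the content of the recursive evaluation rule ii) for test arrangements, and it matches because the left-to-right leaf order of the bracket tree of $[u,v]$ is the leaves of $u$ followed by the leaves of $v$; requiring this order to be $x_1,\ldots,x_n$ forces $u$ to carry the contiguous prefix $x_1,\ldots,x_k$ and $v$ the suffix $x_{k+1},\ldots,x_n$, precisely the contiguous split the definition demands. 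The inclusion of the weight-one symbol $x_1$ in $\mathcal{T}_1$ is what allows the recursion to terminate on branches such as $[x_1,[x_2,x_3]]$ in which one side of a bracket is a single letter. Everything else is a direct appeal to the machinery already assembled, so I expect no essential obstacle beyond this matching and the base-case verification for $[x_1,x_2]$.
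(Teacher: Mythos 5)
Your proof is correct and takes essentially the same route as the paper: the paper's proof sets $\mathcal{T}_1=\{x_1\}$, $\mathcal{T}_2=\{[x_1,x_2]\}$, $\mathcal{T}_k=\emptyset$ for $k \notin \{1,2\}$ and invokes Proposition~\ref{test-arrangement}, exactly as you do. Your induction on the outermost bracket, together with the verification that $[x_1,x_2]$ is a test element of $F(x_1,x_2)$, merely spells out the step the paper dismisses as ``easy to see.''
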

\begin{proof}
Set $\mathcal{T}_1=\{x_1\}$, $\mathcal{T}_2=\{[x_1,x_2]\}$ and $\mathcal{T}_k=\emptyset$ for $k \notin \{1, 2\}$. It is easy to see that there is a $\mathcal{T}$-test arrangement $w$  of weight $n$ satisfying 
$w(x_1, \ldots, x_n)=t$. Hence, $t$ is a test element of $F$ by Proposition~\ref{test-arrangement}.    
\end{proof}
Of course, in the proof of the previous proposition we could have put $\mathcal{T}_2=[F(x_1,x_2), F(x_1,x_2)]$ and thus obtain a much larger class of test elements.
Anyhow, if the reader finds our description of test elements of $F(x_1, x_2)$ satisfying, then they would agree that through the notion of a $\mathcal{T}$-test arrangement we have described all test elements of rank two in free pro-$p$ groups.
In order to give examples of test elements of higher rank we introduce the concept of an almost primitive element.

\subsection{Almost primitive elements}
\begin{definition}
Let $G$ be a pro-$p$ group. A non-primitive element $g \in G$
is termed almost primitive element of $G$ 
if it is a primitive element of every  proper subgroup of $G$ that contains it.
\end{definition}

If $g \in G$ is an almost primitive element of $G$, then $g \in \Phi(G)$ and $g \notin \Phi(H)$ whenever $H \lneq  G$.
Since every proper subgroup of $G$ is contained in a maximal subgroup, it follows that $g \in G$ is an almost primitive elements of $G$ if and only if 
$$g \in \Phi(G) \setminus \bigcup_{M} \Phi(M)$$
where $M$ runs through all maximal subgroups of $G$. Observe that if $G$ is finitely generated, then the almost primitive elements of $G$ form a clopen set.

\begin{pro}
\label{almost primitive is test}
Let $G$ be a finitely generated pro-p group and $H \leq G$.
\begin{enumerate}[(a)]
\item If $\Phi(H)$ contains an almost primitive element of $G$, then $H=G$.
\item Every almost primitive element of $G$ is a test element of $G$.
\end{enumerate}  
\end{pro}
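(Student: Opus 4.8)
The plan is to reduce part (b) to part (a) through the Retract Theorem (Theorem~\ref{test-retract}), so I would prove (a) first and let (b) feed on it. Both parts turn out to be short once the right observation about retracts is isolated, so the emphasis is on getting the logical dependencies right rather than on any hard computation.

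For part (a), I would argue by contradiction straight from the definition of an almost primitive element, using the characterization noted just after that definition: an almost primitive element lies in $\Phi(G)$ but outside $\Phi(M)$ for every proper subgroup $M$. Concretely, suppose $g \in \Phi(H)$ is almost primitive in $G$ while $H \neq G$. Then $H$ is a proper subgroup containing $g$ (since $\Phi(H) \subseteq H$), so by the defining property $g$ must be a primitive element of $H$, that is $g \notin \Phi(H)$. This contradicts $g \in \Phi(H)$, forcing $H = G$.

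For part (b), by Theorem~\ref{test-retract} it suffices to show that an almost primitive element $g$ is contained in no proper retract of $G$. So I would take any retract $H$ of $G$ with $g \in H$ and aim to conclude $H = G$. The key ingredient is the identity $\Phi(H) = H \cap \Phi(G)$, valid for every retract $H$ of a pro-$p$ group $G$. This is precisely the Frattini computation performed in the proof of Corollary~\ref{test-free}: it uses only that the retraction $r \colon G \to H$ is an epimorphism fixing $H$ pointwise, so that $r(\Phi(G)) = \Phi(H)$ and $r$ restricts to the identity on $H \cap \Phi(G)$, giving $H \cap \Phi(G) = r(H \cap \Phi(G)) \subseteq r(H) \cap r(\Phi(G)) = H \cap \Phi(H) = \Phi(H)$, together with the trivial reverse inclusion $\Phi(H) \subseteq H \cap \Phi(G)$. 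Since $g$ is almost primitive we have $g \in \Phi(G)$, and as $g \in H$ this yields $g \in H \cap \Phi(G) = \Phi(H)$. Part (a) now applies and gives $H = G$, so $g$ lies in no proper retract and is therefore a test element.

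The only point demanding care — the main, though mild, obstacle — is recognizing that the equality $\Phi(H) = H \cap \Phi(G)$ proved in Corollary~\ref{test-free} for free pro-$p$ groups holds for retracts of an \emph{arbitrary} pro-$p$ group, since freeness is never used in that Frattini argument (it is invoked only afterward to upgrade ``retract'' to ``free factor''). Once this is in place the two statements interlock immediately: (a) is essentially a reformulation of the defining property of almost primitive elements, and (b) is a one-line deduction from (a), the Frattini identity, and the Retract Theorem.
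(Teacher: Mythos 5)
Your proof is correct, but for part (b) it takes a genuinely different route from the paper. The paper argues directly: given an endomorphism $\varphi$ of $G$ with $\varphi(g)=g$, the identity $\varphi(\Phi(G))=\Phi(\varphi(G))$ (epimorphisms of pro-$p$ groups carry Frattini subgroups onto Frattini subgroups) puts $g$ inside $\Phi(\varphi(G))$; part (a) then forces $\varphi(G)=G$, and the Hopfian property of finitely generated pro-$p$ groups upgrades surjectivity to an automorphism. You instead route the argument through Theorem~\ref{test-retract}: you show that any retract $H$ containing $g$ satisfies $g\in H\cap\Phi(G)=\Phi(H)$, so $H=G$ by part (a), and then invoke the Retract Theorem to conclude $g$ is a test element. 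Both arguments pivot on the same underlying Frattini fact and on part (a), but your version calls on the hard direction of Theorem~\ref{test-retract} --- the one resting on Lemma~\ref{varphi-retract} and its compactness/convergence construction of a retraction onto the stable image --- machinery the paper's direct proof entirely avoids, needing only Hopficity. What your approach buys is conceptual uniformity: it treats almost primitive elements inside the same retract-theoretic framework as the rest of the paper, and your observation that the identity $\Phi(H)=H\cap\Phi(G)$ holds for retracts of an \emph{arbitrary} pro-$p$ group (not just free ones, since freeness enters Corollary~\ref{test-free} only afterward) is sound --- indeed the paper itself uses exactly this generalization in the proof of Theorem~\ref{orbit}. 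The paper's proof, by contrast, is lighter and self-contained, which is presumably why it was preferred.
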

\begin{proof}
Part (a) is clear. Let $g \in G$ be an almost primitive element of $G$.
Let $\varphi: G \to G$ be a homomorphism that fixes $g$. Then $\varphi(\Phi(G)) = \Phi(\varphi(G))$.
Hence, $g \in \Phi(\varphi(G))$ and $\varphi(G)=G$ by part $(a)$. It follows from the Hopfian property of finitely generated pro-$p$ groups that $\varphi$ is an isomorphism, and thus $g$ is a test element of $G$. 
\end{proof}

\begin{pro}
Let $a$ be an almost primitive element of the free pro-$p$ group $F(x_1, \ldots, x_n)$.
Then $rank(a)=n$.
\end{pro}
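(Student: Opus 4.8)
The plan is to establish the two inequalities $\mathrm{rank}(a)\le n$ and $\mathrm{rank}(a)\ge n$ separately. The upper bound is immediate: since $a$ is almost primitive in $F=F(x_1,\dots,x_n)$, Proposition~\ref{almost primitive is test} shows that $a$ is a test element of $F$, so $a\in\mathbb{T}_n$. As $a$ is then already a $\mathcal{T}$-test arrangement of weight $n$ (namely the single element $a\in\mathcal{T}_n$) for $\mathcal{T}=\mathbb{T}_1\cup\dots\cup\mathbb{T}_n$, we get $\mathrm{rank}(a)\le n$. The substance of the proposition is therefore the reverse inequality $\mathrm{rank}(a)\ge n$. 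We may assume $n\ge 2$, the case $n=1$ being trivial since every nontrivial element of $F(x_1)=\mathbb{Z}_p$ is a test element of rank one.

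To prove $\mathrm{rank}(a)\ge n$ I would argue by contradiction, assuming $a=w(x_1,\dots,x_n)$ for some $\mathcal{T}$-test arrangement $w$ of weight $n$ with $\mathcal{T}=\mathbb{T}_1\cup\dots\cup\mathbb{T}_{n-1}$, so that $\mathcal{T}_n=\emptyset$. The idea is to inspect the parse tree of $w$: it is a finite rooted tree with exactly $n$ leaves in which each internal node of arity $m$ carries a test element $v\in\mathbb{T}_m$ with $1\le m\le n-1$. Since $n\ge 2$, the tree cannot consist only of arity-$1$ nodes (those contribute a single leaf), so there is a topmost node $N^*$ of arity $m^*\ge 2$; every node strictly above $N^*$ has arity $1$ and hence corresponds to a power operation $x\mapsto x^{\alpha}$ with $\alpha\in\mathbb{Z}_p\setminus\{0\}$. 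Unwinding the recursive definition of $w(g_1,\dots,g_n)$ along this chain yields $a=(b^*)^{\beta}$ for some $\beta\in\mathbb{Z}_p\setminus\{0\}$ (a product of nonzero exponents), where $b^*=v^*(b^*_1,\dots,b^*_{m^*})$ is the value of the subtree at $N^*$, $v^*\in\mathbb{T}_{m^*}$, and each $b^*_j$ is the evaluation of a child subtree. By Proposition~\ref{test-arrangement}, each $b^*_j$ is a test element of the free factor $A^*_j\le F$ generated by the corresponding block of variables, and these blocks are disjoint, exactly the setting of Proposition~\ref{coprod-test}.

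As in the proof of Proposition~\ref{coprod-test}, the elements $b^*_1,\dots,b^*_{m^*}$ then form a basis of $K^*:=\langle b^*_1,\dots,b^*_{m^*}\rangle$, which is free pro-$p$ of rank $m^*$. Because $d(K^*)=m^*\le n-1<n=d(F)$, the subgroup $K^*$ is proper in $F$, and it contains $a=(b^*)^{\beta}$. The isomorphism $F(x_1,\dots,x_{m^*})\to K^*$ sending $x_j\mapsto b^*_j$ carries $v^*$ to $b^*$ and maps $\Phi(F(x_1,\dots,x_{m^*}))$ onto $\Phi(K^*)$. Since $v^*$ is a test element of a free pro-$p$ group of rank $m^*\ge 2$, Corollary~\ref{test-free} gives $v^*\in\Phi(F(x_1,\dots,x_{m^*}))$ (a test element cannot lie in a proper free factor, and a primitive element would), so $b^*\in\Phi(K^*)$ and hence $a=(b^*)^{\beta}\in\Phi(K^*)$. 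Thus $a$ is not primitive in the proper subgroup $K^*$, contradicting that $a$ is almost primitive in $F$. This establishes $\mathrm{rank}(a)\ge n$ and completes the proof.

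The main obstacle is the bookkeeping at the topmost branching node: one must verify that peeling off the chain of arity-$1$ operations above $N^*$ genuinely expresses $a$ as a power $(b^*)^{\beta}$ with $\beta\ne 0$, and that the chosen elements $b^*_j$—each a nontrivial test element of a distinct free factor $A^*_j$—really do form a basis of the group they generate, so that the rank count $m^*<n$ is legitimate and $K^*$ is proper. Once this basis claim is secured (it is precisely the fact exploited in Proposition~\ref{coprod-test}), the Frattini argument supplied by Corollary~\ref{test-free} closes the contradiction and forces the top-level operation to have arity at least $n$, i.e.\ $\mathrm{rank}(a)=n$.
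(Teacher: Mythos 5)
Your proof is correct, and it departs from the paper's argument in a meaningful way, even though both proofs share the same overall strategy: derive a contradiction by exhibiting the almost primitive element $a$ inside $\Phi(H)$ for some proper subgroup $H\leq F$, using Corollary~\ref{test-free} to place the top-level label (a test element of a free pro-$p$ group of rank at least two) inside the relevant Frattini subgroup. The differences are in the implementation. First, the paper handles weight-one labels by an iterated reduction: when the root has arity one with label $x_1^{\alpha}$, it splits into cases, getting a contradiction via $a\in\Phi^2(F)\subseteq\Phi(M)$ when $\alpha\in p\mathbb{Z}_p$, and replacing $a$ by the almost primitive element $a^{\alpha^{-1}}$ when $\alpha$ is a unit; you instead collapse the whole chain of arity-one nodes into a single exponent $\beta\neq 0$ and never need to distinguish unit from non-unit exponents, because $\Phi(K^*)$ absorbs arbitrary $\mathbb{Z}_p$-powers of $b^*$. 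Second, the proper subgroup used for the contradiction is different: the paper notes (by pigeonhole, since $k_1+\ldots+k_s=n>s$) that some block has $k_j>1$, chooses a \emph{maximal} subgroup $M$ of $F$ containing all variables outside that block, observes that the $j$-th argument lies in $\Phi(F)\leq M$, and concludes $a\in\Phi(M)$, contradicting the characterization $a\notin\bigcup_M\Phi(M)$; you instead take $K^*=\langle b_1^*,\ldots,b_{m^*}^*\rangle$, which is free of rank $m^*\leq n-1$ (the basis fact asserted in the proof of Proposition~\ref{coprod-test}, resting on the pro-$p$ Kurosh-type result) and hence proper, and conclude $a\in\Phi(K^*)$. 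What your route buys is economy: no case analysis and no pigeonhole step, with the full strength of the definition of almost primitivity (primitivity in \emph{every} proper subgroup, not just maximal ones) doing the work; what it costs is the extra input that nontrivial elements chosen from distinct free factors form a basis of the subgroup they generate, a fact the paper's proof of this proposition manages to avoid entirely.
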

\begin{proof}
If $n=1$, then there is nothing to prove. Suppose that $n > 1$, and 
assume to the contrary that $a \in \langle \mathcal{T} \rangle_{\mathcal{T}}$ where 
$\mathcal{T} = \mathbb{T}_1 \cup \ldots \cup \mathbb{T}_m$ for some $m<n$.
Then there is a $\mathcal{T}$-test arrangement $w$ of weight $n$ such that $w(x_1, \ldots, x_n)=a$.
Now $w=v(u_1, \ldots, u_s)$ for some $v \in \mathbb{T}_s, s \leq m < n,$ and $\mathcal{T}$-test arrangements $u_i$ of respective weights $k_i, 1 \leq i \leq s$. 
If $s=1$, then $v=x_1^{\alpha} \in \mathbb{T}_1, \alpha \neq 0,$ and $w(x_1, \ldots, x_n)=u_1(x_1, \ldots, x_n)^{\alpha}$.
Since $u_1(x_1, \ldots, x_n)$ is a test element of $F(x_1, \ldots, x_n)$ and $n>1$, we have $u_1(x_1, \ldots, x_n) \in \Phi(F)$.
If $\alpha \in p\mathbb{Z}_p$, then $a=w(x_1, \ldots, x_n) \in \Phi^2(F)$, which contradicts the fact that $a$ is an almost primitive element of $F$. Hence, $\alpha \in \mathbb{Z}_p \setminus p\mathbb{Z}_p$ and 
$u_1(x_1, \ldots, x_n)=a^{\alpha^{-1}}$. Clearly, $a^{\alpha^{-1}}$ is an almost primitive element of $F$ and 
$rank(a^{\alpha^{-1}})=rank(a)=m$. This shows that we can assume that $s>1$.
  
Since $k_1 + \ldots + k_s=n$, it follows that $k_j > 1$ for some $1 \leq j \leq s$.
Let $M \leq F$ be a maximal subgroup of $F$ satisfying $x_i \in M$ for all $i \notin \{k_1 + \ldots + k_{j-1}+1, \ldots, k_1 + \ldots + k_j\}$. Observe that $u_j(x_{k_1 + \ldots + k_{j-1}+1}, \ldots, x_{k_1 + \ldots + k_j}) \in \Phi(F) \leq M$.
Therefore, 
\[a=w(x_1, \ldots, x_n)=v(u_1(x_1, \ldots, x_{k_1}), \ldots, u_m(x_{k_1+\ldots+ k_{m-1}+1}, \ldots, x_n)) \in \Phi(M),\]
which is a contradiction since $a$ is an almost primitive element of $F$.
\end{proof}

\begin{pro}
\label{almost primitive - free factor}
Let $F=A_1 \coprod \ldots \coprod A_n$ be a free pro-$p$ group, and let $u_i \in A_i, 1 \leq i \leq n$.
Then $w=u_1 \ldots u_n$ is an almost primitive element of $F$ if and only if $u_i$ is an almost primitive element of $A_i$ for 
all $1 \leq i \leq n$. 
\end{pro}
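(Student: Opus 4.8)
The plan is to reduce everything to the maximal-subgroup criterion for almost primitivity recorded just before Proposition~\ref{almost primitive is test}: for a pro-$p$ group $G$, an element $g$ is almost primitive if and only if $g \in \Phi(G) \setminus \bigcup_{M} \Phi(M)$, with $M$ ranging over the maximal subgroups of $G$. Since $F=A_1 \coprod \ldots \coprod A_n$ is a free pro-$p$ product, abelianizing modulo $p$ gives a decomposition $F/\Phi(F)=\bigoplus_{i} A_i/\Phi(A_i)$, so the image of $w=u_1 \cdots u_n$ in $F/\Phi(F)$ is $\sum_i \bar u_i$; this vanishes exactly when every $u_i \in \Phi(A_i)$. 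Hence $w$ is non-primitive in $F$ iff each $u_i$ is non-primitive in $A_i$, and when $w$ is primitive both sides of the asserted equivalence fail. I would therefore make the standing reduction $u_i \in \Phi(A_i)$ for all $i$ and prove that, under this hypothesis, $w \notin \bigcup_M \Phi(M)$ (over maximal $M \leq F$) is equivalent to each $u_i \notin \bigcup_{M_i}\Phi(M_i)$ (over maximal $M_i \leq A_i$).

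The main tool is the Kurosh subgroup theorem applied to the maximal subgroups $M$ of $F$, which are precisely the open normal subgroups of index $p$. For such an $M$ I would split the indices into $S=\{i : A_i \leq M\}$ and $T=\{i : A_i \not\leq M\}$; for $i \in T$ one has $MA_i=F$, so $M \cap A_i$ has index $p$ in $A_i$. The Kurosh theorem then exhibits $M$ as a free pro-$p$ product in which each $A_i$ (for $i \in S$) and each $M \cap A_i$ (for $i \in T$) occurs as a free factor, these being pairwise distinct (the remaining conjugate copies and the free part play no role). Using the basic fact that $\Phi(B)=B \cap \Phi(M)$ for every free factor $B \leq_{\textrm{ff}} M$, together with the retraction of $M$ onto any of its free factors, I can read off the image of $w$ in $M/\Phi(M)$ as $\sum_i \bar u_i$: for $i \in S$ the element $u_i \in \Phi(A_i) \subseteq \Phi(M)$ contributes $0$, while for $i \in T$ the retraction $M \to M\cap A_i$ sends $w$ to $u_i$ and hence detects $\bar u_i$ as the image of $u_i$ in $(M\cap A_i)/\Phi(M\cap A_i)$.

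For the implication that almost primitivity of all $u_i$ forces $w$ to be almost primitive, I would fix an arbitrary maximal $M$ and observe that $T \neq \emptyset$, since otherwise $F=\langle A_1, \ldots, A_n \rangle \leq M$. Picking any $i \in T$, the subgroup $M\cap A_i$ is a proper subgroup of $A_i$ containing $u_i$, so primitivity of $u_i$ there yields $u_i \notin \Phi(M\cap A_i)$; the retraction $M \to M\cap A_i$ then shows the image of $w$ in $M/\Phi(M)$ is nonzero, i.e. $w \notin \Phi(M)$. As $M$ was arbitrary, $w$ is almost primitive. For the converse I would argue contrapositively: if some $u_{i_0}$ (necessarily in $\Phi(A_{i_0})$) fails to be almost primitive in $A_{i_0}$, choose a maximal $M_{i_0} \leq A_{i_0}$ with $u_{i_0}\in \Phi(M_{i_0})$, let $\pi:A_{i_0} \twoheadrightarrow \mathbb{Z}/p\mathbb{Z}$ have kernel $M_{i_0}$, and set $M=\ker(\pi \circ r_{i_0})$, where $r_{i_0}:F \to A_{i_0}$ is the canonical retraction. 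Then $A_i \leq M$ for $i \neq i_0$ and $M \cap A_{i_0}=M_{i_0}$, so $u_i \in \Phi(A_i) \subseteq \Phi(M)$ for $i \neq i_0$ and $u_{i_0}\in \Phi(M_{i_0})=\Phi(M\cap A_{i_0}) \subseteq \Phi(M)$; hence $w \in \Phi(M)$, contradicting the almost primitivity of $w$.

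The single genuinely nontrivial ingredient is the Kurosh decomposition of the index-$p$ subgroups and the identification of which free factor each $u_i$ occupies; I expect the bulk of the written proof to go into making this picture precise and into checking that the retraction $M \to M\cap A_i$ (respectively $M \to A_i$) carries $w$ to $u_i$ (respectively kills $u_j$ for $j \neq i$). Once that bookkeeping is in place, every remaining step is an application of the Frattini identity $\Phi(B)=B\cap\Phi(M)$ for free factors and of the fact that a retraction onto a free factor induces the corresponding coordinate projection of $M/\Phi(M)$.
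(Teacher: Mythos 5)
Your proof is correct and follows essentially the same route as the paper's: both reduce almost primitivity to the criterion $g \in \Phi(G)\setminus\bigcup_{M}\Phi(M)$ over maximal subgroups, use the isomorphism $F/\Phi(F) \cong A_1/\Phi(A_1)\times\cdots\times A_n/\Phi(A_n)$ to reduce to the case $u_i \in \Phi(A_i)$ for all $i$, and apply the Kurosh subgroup theorem to a maximal subgroup $M$ of $F$ in both directions. The only cosmetic differences are that you read off the mod-Frattini components of $w$ via retractions of $M$ onto its free factors and prove the converse contrapositively with an explicitly constructed $M=\ker(\pi\circ r_{i_0})$, whereas the paper extracts the same information from the induced isomorphism $(A_1\cap M)/\Phi(A_1\cap M)\times\cdots\times(A_n\cap M)/\Phi(A_n\cap M)\times C/\Phi(C)\cong M/\Phi(M)$ and argues the converse directly.
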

\begin{proof}
The homomorphism $\alpha:A_1 / \Phi(A_1) \times  \ldots \times A_n / \Phi(A_n) \to F/\Phi(F)$ defined by $\alpha(x_1\Phi(A_1), \ldots, x_n\Phi(A_n))=x_1\ldots x_n\Phi(F)$ is an isomorphism.
It follows that $w \in \Phi(F)$ if and only if $u_i \in \Phi(A_i)$ for all $1 \leq i \leq n$.

Suppose that $u_i$ is an almost primitive element of $A_i$ for all $1 \leq i \leq n$. Let $M$ be a maximal subgroup of $F$. By the Kurosh subgroup theorem, 
\[M = A_1 \cap M \coprod \ldots \coprod A_n \cap M \coprod C\]
for some $C \leq M$. Moreover, there is $j, 1 \leq j \leq n,$ for which $|M:A_j \cap M|=p$. (Otherwise $A_i \leq M$ for all $i$, and  $F \leq M$, contradicting the fact that $M$ is a proper subgroup of $F$.)
By assumption $u_j \in A_j \cap M$ is an almost primitive element of $A_j$, and thus $u_j \notin \Phi(A_j \cap M)$. It follows from the isomorphism (defined as above)  
$$(A_1 \cap M) / \Phi(A_1 \cap M) \times  \ldots \times (A_n \cap M) / \Phi(A_n \cap M) \times C / \Phi(C) \cong M/\Phi(M),$$
that $w \notin \Phi(M)$. Therefore, $w$ is an almost primitive element of $F$.

Now assume that $w$ is an almost primitive element of $F$. Given $j, 1 \leq j \leq n$, and a maximal subgroup $N$ of $A_j$, we need to prove that $u_j \notin \Phi(N)$.
Let $M$ be a maximal subgroup of $F$ satisfying $A_j \cap M=N$ and $A_i \leq M$ for all $i \neq j$. Then $u_i \in \Phi(A_i) \leq \Phi(M)$ for all $i \neq j$, and 
$\Phi(M) \cap N = \Phi(N)$. Therefore, $w=u_1 \ldots u_n \notin \Phi(M)$ implies $u_j \notin \Phi(N)$. 
\end{proof}

\begin{pro}
\label{almost_primitive - examples}
\begin{enumerate}[a)]
\item For every $\alpha \in p\mathbb{Z}_p \setminus p^2\mathbb{Z}_p$, $x_1^{\alpha}$ is an almost primitive element of the procyclic group $F(x_1)$.
\item The commutator $[x_1, x_2]$ is an almost primitive element of $F(x_1, x_2)$.
\item Given $n,k \geq 0$ and $\alpha_i \in p\mathbb{Z}_p \setminus p^2\mathbb{Z}_p, 1 \leq i \leq n$, then 
\[x_1^{\alpha_1} \ldots x_n^{\alpha_n}[x_{n+1}, x_{n+2}] \ldots [x_{n+2k-1}, x_{n+2k}]\]
 is an almost primitive element of $F(x_1, \ldots, x_n, x_{n+1}, \ldots, x_{n+2k})$.
\item Given $n,k \geq 0$, $\alpha_i \in p\mathbb{Z}_p \setminus \{0\}, 1 \leq i \leq n$, and 
$\beta_i \in \mathbb{Z}_p \setminus \{0\}, 1 \leq i \leq 2k$, then
\[x_1^{\alpha_1} \ldots x_n^{\alpha_n}[x_{n+1}^{\beta_1}, x_{n+2}^{\beta_2}] \ldots 
[x_{n+2k-1}^{\beta_{2k-1}}, x_{n+2k}^{\beta_{2k}}]\]
 is a test element of $F(x_1, \ldots, x_n, x_{n+1}, \ldots, x_{n+2k})$.
\end{enumerate}
\end{pro}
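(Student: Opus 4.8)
The plan is to establish (a) and (b) as the two basic building blocks, to deduce (c) by combining them through the free-product criterion of Proposition~\ref{almost primitive - free factor}, and to obtain (d) from (c) by a substitution of powers. Part (a) is immediate: $F(x_1)\cong\mathbb{Z}_p$ is procyclic, so its unique maximal subgroup is $M=\langle x_1^p\rangle$, with $\Phi(F(x_1))=M$ and $\Phi(M)=\langle x_1^{p^2}\rangle$. By the characterization recorded just before the proposition---an element is almost primitive exactly when it lies in $\Phi(G)\setminus\bigcup_M\Phi(M)$---the almost primitive elements of $F(x_1)$ are precisely the $x_1^{\alpha}$ with $\alpha\in p\mathbb{Z}_p\setminus p^2\mathbb{Z}_p$.

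For (b), since $[x_1,x_2]\in[F,F]\subseteq\Phi(F)$ lies in every maximal subgroup and is non-primitive, the same characterization reduces the claim to showing $[x_1,x_2]\notin\Phi(M)$ for each maximal subgroup $M\trianglelefteq F=F(x_1,x_2)$. I would compute a basis of $M$ by the pro-$p$ Reidemeister--Schreier procedure. Writing $M=\ker\theta$ for a surjection $\theta:F\to\mathbb{Z}/p$, the identity $[x_1,x_2x_1^{c}]=[x_1,x_2]$ lets me change basis so as to normalize $\theta$ to $\theta(x_1)=1$, $\theta(x_2)=0$ (the one remaining maximal subgroup, where $\theta(x_1)=0$, is handled by the symmetry $x_1\leftrightarrow x_2$). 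With the Schreier transversal $\{1,x_1,\dots,x_1^{p-1}\}$ one obtains the basis $\{x_1^p\}\cup\{\,y_i:=x_1^{i}x_2x_1^{-i}\mid 0\le i\le p-1\,\}$ of $M$, and a direct rewrite gives $[x_1,x_2]=y_1y_0^{-1}$. Its image in $M/\Phi(M)$ is $\bar y_1-\bar y_0\neq 0$, so $[x_1,x_2]$ is primitive in $M$. This Reidemeister--Schreier computation is the main obstacle in the proof; the rest is formal.

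Part (c) now follows by combining (a) and (b). Decompose $F=A_1\coprod\cdots\coprod A_n\coprod B_1\coprod\cdots\coprod B_k$ with $A_i=\langle x_i\rangle$ for $1\le i\le n$ and $B_j=\langle x_{n+2j-1},x_{n+2j}\rangle$ for $1\le j\le k$, and write the element as $u_1\cdots u_n v_1\cdots v_k$, where $u_i=x_i^{\alpha_i}\in A_i$ and $v_j=[x_{n+2j-1},x_{n+2j}]\in B_j$. Part (a) (using $\alpha_i\in p\mathbb{Z}_p\setminus p^2\mathbb{Z}_p$) shows each $u_i$ is almost primitive in $A_i$, and part (b) shows each $v_j$ is almost primitive in $B_j$. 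Proposition~\ref{almost primitive - free factor} then yields at once that the product is almost primitive in $F$, hence a test element of $F$ by Proposition~\ref{almost primitive is test}.

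Finally, (d) reduces to (c) by powers. Taking all the exponents in (c) equal to $p$, the element $w_0=x_1^{p}\cdots x_n^{p}[x_{n+1},x_{n+2}]\cdots[x_{n+2k-1},x_{n+2k}]$ is a test element of $F$. Given the data of (d), put $\gamma_i=\alpha_i/p$ for $1\le i\le n$---which lies in $\mathbb{Z}_p\setminus\{0\}$ precisely because $\alpha_i\in p\mathbb{Z}_p\setminus\{0\}$---and $\gamma_{n+l}=\beta_l\in\mathbb{Z}_p\setminus\{0\}$ for $1\le l\le 2k$. Substituting $x_r\mapsto x_r^{\gamma_r}$ carries $w_0$ to exactly the element displayed in (d), so Proposition~\ref{test_powers} (substitution of nonzero powers preserves test elements) finishes the proof. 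The only mild subtlety here is the bookkeeping that the hypothesis $\alpha_i\in p\mathbb{Z}_p$ is exactly what makes $\gamma_i=\alpha_i/p$ a legitimate nonzero $p$-adic exponent.
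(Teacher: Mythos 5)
Your proposal is correct, and its overall architecture---(a) and (b) as the building blocks, (c) by the free-product decomposition $F=A_1\coprod\cdots\coprod A_n\coprod B_1\coprod\cdots\coprod B_k$ plus Proposition~\ref{almost primitive - free factor}, and (d) from (c) by Proposition~\ref{almost primitive is test} and the power substitution of Proposition~\ref{test_powers}---is exactly the paper's; indeed the paper disposes of (a) as obvious and of (d) as immediate, and its proof of (c) is word for word your argument. The one genuinely different step is (b). The paper's proof is computation-free: a maximal subgroup $M$ is normal in $F$, and $\Phi(M)$ is characteristic in $M$, hence $\Phi(M)\unlhd F$; so if $[x_1,x_2]\in\Phi(M)$, then $F/\Phi(M)$ is generated by two commuting elements, hence abelian, giving $[F,F]\le\Phi(M)$; but then $M/[F,F]$ is a subgroup of $F/[F,F]\cong\mathbb{Z}_p\times\mathbb{Z}_p$ with $d(M/[F,F])=\dim_{\mathbb{F}_p}M/\Phi(M)=d(M)=p+1\ge 3$, contradicting the fact that subgroups of $\mathbb{Z}_p\times\mathbb{Z}_p$ are $2$-generated. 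You instead normalize the epimorphism $\theta\colon F\to\mathbb{Z}/p$ defining $M$ and compute an explicit Schreier basis $\{x_1^p\}\cup\{x_1^ix_2x_1^{-i}\mid 0\le i\le p-1\}$, reading off that $[x_1,x_2]=y_1y_0^{-1}$ is primitive in $M$. Both routes work; the paper's buys brevity and sidesteps two points where your version needs care: (i) the identity $[x_1,x_2x_1^c]=[x_1,x_2]$ holds for the convention $[a,b]=aba^{-1}b^{-1}$ (with the convention $[a,b]=a^{-1}b^{-1}ab$ one only gets a conjugate of $[x_1,x_2]$---still sufficient, since conjugation by $x_1^c$ induces an automorphism of the normal subgroup $M$, but this should be said); and (ii) one must justify that the abstract Schreier generators form a basis of $M$ as a pro-$p$ group, which follows because they topologically generate the open subgroup $M$ (the abstract index-$p$ subgroup of the dense abstract free group is dense in $M$) and their number $p+1$ equals $d(M)=1+[F:M](d(F)-1)$. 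What your computation buys in exchange is explicit information: it exhibits $[x_1,x_2]$ as a visible part of a basis of every maximal subgroup, which is more than the bare non-membership in $\Phi(M)$ that the paper's counting argument delivers.
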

\begin{proof}
Part $(a)$ is obvious.
\\
$(b)$ Clearly, $[x_1, x_2] \in \Phi(F(x_1, x_2))$. Let $M$ be a maximal subgroup of $F$, and assume to the contrary that
$[x_1, x_2] \in \Phi(M)$. Since $\Phi(M)$ is a characteristic subgroup of $M$ and $M \unlhd F$, it follows that
$\Phi(M) \unlhd F$; hence, $[x_1, x_2] \in \Phi(M)$ implies  $[F, F] \leq \Phi(M)$.
Moreover, $M / [F,F]$ is a maximal subgroup of the abelianization  $F_{\textrm{ab}}=F / [F,F] \cong \mathbb{Z}_p \times \mathbb{Z}_p$ , and 
$$d(M / [F,F])=\textrm{dim}_{\mathbb{F}_p} (M / [F,F]) / \Phi(M/ [F,F])=\textrm{dim}_{\mathbb{F}_p} M / \Phi(M)=p+1 \geq 3,$$ 
which is a contradiction since every subgroup of $\mathbb{Z}_p \times \mathbb{Z}_p$ can be generated by two elements.
\\
$(c)$ Let $A_i = \langle x_i \rangle$ for $1 \leq i \leq n$, and $B_i=\langle x_{n+2i-1}, x_{n + 2i} \rangle$ for $1 \leq i \leq k$.
Then $F(x_1, \ldots, x_n, x_{n+1}, \ldots, x_{n+2k})=A_1 \coprod \ldots \coprod A_n \coprod B_1 \coprod \ldots \coprod B_k$.
It follows from $(a)$ that for all $1 \leq i \leq n$,  $x_i^{\alpha_i} \in A_i$ is an almost primitive element of $A_i$. By part $(b)$, $[x_{n+ 2i-1}, x_{n+2i}] \in B_i$ is an almost primitive element of $B_i$ for all $1 \leq i \leq k$.
Now $(c)$ follows from Proposition~\ref{almost primitive - free factor}.
\\
$(d)$ This is an immediate consequence of $(c)$ and Proposition~\ref{test_powers}. 
\end{proof}

\begin{rem}
\label{almost primitive remark}
With a slight extension of the proof of Proposition~\ref{almost_primitive - examples} $(b)$ we can show that $x_1^{\alpha}[x_1,x_2]$ is an almost primitive element of $F(x_1, x_2)$ for all $\alpha \in p\mathbb{Z}_p$.
It is also true, but more difficult to prove, that every element of the form $x_1^{\alpha_1}x_2^{\alpha_2}[x_1, x_2]$ with $\alpha_1, \alpha_2 \in p\mathbb{Z}_p$ is an almost primitive element of $F(x_1, x_2)$ 
(the only difficult case is when $\alpha_1=\alpha_2=p$ ). These results will appear in a subsequent paper by the authors (\cite{SnoTan}) where we also introduce the notion of a generic element, which is a generalization of an almost primitive element. 
Here we would like to point out that using these results we get many other almost primitive elements. For example, it follows from Proposition~\ref{almost primitive - free factor} and Proposition~\ref{almost primitive - modify} $(b)$ that 
$x_1^{\alpha_1} \ldots x_{2n}^{\alpha_{2n}}[x_1,x_2] \ldots [x_{2n-1}, x_{2n}]$ is an almost primitive element of $F(x_1, \ldots, x_{2n})$.
\end{rem}

In the following proposition we describe two simple ways in which new almost primitive elements can be obtained. 

\begin{pro} 
\label{almost primitive - modify}
Let $G$ be a pro-$p$ group, and let $g \in G$ be an almost primitive element of $G$. Then
\begin{enumerate}[(a)]
\item For every $h \in \bigcap_{M} \Phi(M)$ where the intersection is over all maximal subgroups of $G$, $gh$ is an almost primitive element of $G$.
\item If $g=h_1 \ldots h_n$ for some $h_i \in \Phi(G), 1 \leq i \leq n$,  then for every permutation $\sigma$ of $\{1, \ldots, n\}$, $g_{\sigma}=h_{\sigma (1)} \ldots h_{\sigma (n)}$ is an almost primitive element of $G$.
\end{enumerate}
\end{pro}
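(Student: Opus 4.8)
The plan is to reduce everything to the characterization established just before Proposition~\ref{almost primitive is test}: an element is an almost primitive element of $G$ precisely when it lies in $\Phi(G) \setminus \bigcup_M \Phi(M)$, where $M$ ranges over all maximal subgroups of $G$. Two elementary facts will carry the argument. First, since $G$ is pro-$p$, every maximal subgroup $M$ is normal of index $p$, so the characteristic subgroup $\Phi(M)$ is normal in $G$ and the quotient $M/\Phi(M)$ is an elementary abelian $p$-group. Second, $\Phi(G)=\bigcap_M M$, so $\Phi(G) \le M$ for every maximal $M$; in particular $\bigcap_M \Phi(M) \subseteq \Phi(G)$.

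For part (a), I would first note that $h \in \bigcap_M \Phi(M) \subseteq \Phi(G)$ gives $gh \in \Phi(G)$. It then remains to rule out $gh \in \Phi(M)$ for every maximal $M$. Supposing $gh \in \Phi(M_0)$ for some $M_0$, the inclusion $h \in \Phi(M_0)$ together with the fact that $\Phi(M_0)$ is a subgroup forces $g=(gh)h^{-1} \in \Phi(M_0)$, contradicting $g \notin \bigcup_M \Phi(M)$. Hence $gh \in \Phi(G)\setminus\bigcup_M\Phi(M)$, and so $gh$ is an almost primitive element of $G$.

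For part (b), the key observation is that each $h_i$, lying in $\Phi(G) \le M$, has a well-defined image in the abelian group $M/\Phi(M)$; consequently the product $h_{\sigma(1)}\cdots h_{\sigma(n)}$ is independent of the permutation $\sigma$ modulo $\Phi(M)$, which yields $g_{\sigma} \equiv g \pmod{\Phi(M)}$ for every maximal $M$. Since $g_{\sigma} \in \Phi(G)$ (being a product of elements of the subgroup $\Phi(G)$) and $g \notin \Phi(M)$ forces $g_{\sigma} \notin \Phi(M)$ for each $M$, I would conclude that $g_{\sigma}$ lies in $\Phi(G)\setminus\bigcup_M\Phi(M)$ and is therefore almost primitive.

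Neither part presents a genuine obstacle; the only points requiring care are the normality of $\Phi(M)$ in $G$, which is what makes the congruence $g_{\sigma} \equiv g \pmod{\Phi(M)}$ meaningful, and the elementary abelian (hence commutative) structure of $M/\Phi(M)$, which is precisely what permits the factors of $g$ to be reordered without leaving the coset. Once these two structural facts are in place, both statements follow by a direct reduction modulo each $\Phi(M)$.
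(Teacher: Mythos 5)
Your proof is correct and follows essentially the same route as the paper's: both arguments reduce to the characterization of almost primitive elements as those in $\Phi(G)\setminus\bigcup_M\Phi(M)$ and then verify, for each maximal subgroup $M$, the congruence $gh\equiv g \pmod{\Phi(M)}$ in part (a) and $g_\sigma\equiv g \pmod{\Phi(M)}$ in part (b), the latter via commutativity of $M/\Phi(M)$. The only difference is expository: you spell out the structural facts (maximal subgroups are normal of index $p$, $M/\Phi(M)$ is elementary abelian) that the paper leaves implicit, and in fact normality of $\Phi(M)$ in $G$ is not even needed, since all elements involved already lie in $M$ and $\Phi(M)\trianglelefteq M$ automatically.
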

\begin{proof}
$(a)$ Clearly, $gh \in \Phi(G)$. Let $M$ be a maximal subgroup of $G$. Then $gh \equiv g \textrm{ mod } \Phi(M)$, and thus $gh \notin \Phi(M)$.
\\
$(b)$ Let $g=h_1 \ldots h_n$ where $h_i \in \Phi(G)$ for $1 \leq i \leq n$.
Then $g_{\sigma} \in \Phi(G)$. Let $M \leq G$ be a maximal subgroup. Then $h_i \in M$ for all $1 \leq i \leq n$, and $g_{\sigma}\equiv g \textrm{ mod } \Phi(M)$.
Hence,  $g_{\sigma} \notin \Phi(M)$ and $g_{\sigma}$ is an almost primitive element of $G$.
\end{proof}

\begin{lem}
\label{going up}
Let $F$ be a free pro-$p$ group of finite rank, and let $H$ be a subgroup of $F$ (not necessarily finitely generated) with algebraic closure $F$.
If $w \in H$ is an almost primitive (resp. test) element of $H$, then it is a test element of $F$. In particular, every test element of a subgroup of finite index in $F$ is a test element of $F$.
\end{lem}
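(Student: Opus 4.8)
The plan is to reduce everything to the free-factor characterization of test elements. By Corollary~\ref{test-free}, $w$ is a test element of $F$ if and only if $w$ lies in no proper free factor of $F$. Moreover, the algebraic closure of $H$ in $F$ is by definition the intersection of all free factors of $F$ that contain $H$, so the hypothesis that this algebraic closure equals $F$ is precisely the assertion that no proper free factor of $F$ contains $H$. Accordingly I would argue by contradiction: assume $w$ lies in some proper free factor $L \lneq F$, and aim to deduce $H \subseteq L$, which would contradict the hypothesis.

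The decisive step is to pass to the intersection with $H$. Applying Proposition~\ref{freefactors-intersection} to the two families $\{H,L\}$ and $\{H,F\}$ (here $H \leq_{\textrm{ff}} H$ trivially and $L \leq_{\textrm{ff}} F$ by assumption) gives $H \cap L \leq_{\textrm{ff}} H$. Since $w \in H$ and $w \in L$, we have $w \in H \cap L$; thus $w$ sits inside a free factor of $H$, and the goal becomes to show that this free factor is all of $H$.

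To finish I would split into the two cases of the hypothesis. If $w$ is a \emph{test element} of $H$, write $H = (H \cap L) \coprod C$ and let $r \colon H \to H$ be the retraction that is the identity on $H \cap L$ and trivial on $C$. Then $r(w) = w$, while $r$ fails to be surjective as soon as $C \neq 1$; since $w$ is a test element of $H$, this forces $C = 1$, i.e.\ $H \cap L = H$. If instead $w$ is an \emph{almost primitive element} of $H$, then $w$ is non-primitive in $H$; were $H \cap L \neq H$, then $H \cap L$ would be a proper subgroup of $H$ containing $w$, so almost primitivity would make $w$ primitive in $H \cap L$, and the free-factor relation $\Phi(H \cap L) = (H \cap L) \cap \Phi(H)$ would then place $w$ outside $\Phi(H)$, i.e.\ make $w$ primitive in $H$ --- a contradiction. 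In both cases $H = H \cap L \subseteq L$, contradicting the hypothesis that the algebraic closure of $H$ in $F$ is $F$. I expect the point needing most care to be exactly this case analysis: since $H$ need not be finitely generated, the characterization in Corollary~\ref{test-free} is unavailable for $H$, and one must run the test-element argument directly from the definition via the retraction onto $H \cap L$.

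For the final ``in particular'' statement, suppose $H$ has finite index in $F$. Then every free factor $L$ of $F$ containing $H$ also has finite index. When $\rank(F) \geq 2$, the Nielsen--Schreier index formula $\rank(L) - 1 = |F:L|\,(\rank(F) - 1)$ shows that an open free factor with $|F:L| \geq 2$ would have $\rank(L) > \rank(F)$, which is impossible for a free factor of $F$; hence $L = F$. (For $\rank(F) \leq 1$ the only proper free factor is trivial, so again $L = F$.) Consequently the only free factor of $F$ containing $H$ is $F$ itself, the algebraic closure of $H$ in $F$ equals $F$, and the first part of the lemma applies to give that every test element of $H$ is a test element of $F$.
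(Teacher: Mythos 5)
Your proof is correct and follows essentially the same route as the paper's: both intersect a free factor of $F$ containing $w$ with $H$ via Proposition~\ref{freefactors-intersection}, use the test (resp.\ almost primitive) hypothesis on $w$ to force that intersection to equal $H$, and then invoke the hypothesis on the algebraic closure of $H$ together with Corollary~\ref{test-free}. The only differences are presentational: the paper phrases the argument through the algebraic closure of $\langle w \rangle$ in $F$ rather than by contradiction with a proper free factor $L$, and it leaves as ``easy to see'' and ``obvious'' the test/almost-primitive case analysis and the finite-index claim that you spell out in full.
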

\begin{proof}
It is easy to see that if $w \in H$ is an almost primitive (resp. test) element of $H$, then the algebraic closure of $\langle w \rangle$ in $H$ is exactly $H$. 
Now let $A$ be the algebraic closure of  $\langle w \rangle$ in $F$.
By Proposition~\ref{freefactors-intersection}, $A \cap H$ is a free factor of $H$. Since $w \in A \cap H$, it follows that $H \leq A$.
But the algebraic closure of $H$ in $F$ is exactly $F$, hence $F=A$. By Corollary~\ref{test-free}, $w$ is a test element of $F$.
Now the last statement in the proposition follows from the obvious fact that the algebraic closure of a finite index subgroup of $F$ is exactly $F$. 
\end{proof}

\begin{lem}
\label{a lot of test elements}
Let $G$ be a pro-$p$ group and $w \in \Phi(G)$. Then the following holds:
\begin{enumerate}[(a)]
\item there exists $H \leq G$  for which $w \in H$ and $w$ is an almost primitive element of $H$;
\item if $G$ is a free pro-$p$ group with basis $\{a_1, \ldots, a_n\}$, then there exists a subset $\{a_{i_1}, \ldots, a_{i_m}\} \subseteq \{a_1, \ldots, a_n\}$ such that
for every test element $u$ of $F(x_1, \ldots, x_m)$ contained in $\Phi(F)$, $w \cdot u(a_{i_1}, \ldots, a_{i_m})$ is a test element of $G$.
\end{enumerate}
\end{lem}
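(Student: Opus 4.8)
For part (a), the plan is to realize $H$ as a minimal element of the family $\mathcal{F}=\{H\leq G \mid w\in\Phi(H)\}$, ordered by inclusion. This family is nonempty since $w\in\Phi(G)$ puts $G$ in it. To apply Zorn's lemma I would take a chain $\{H_i\}_{i\in I}$ in $\mathcal{F}$ and set $H=\bigcap_i H_i$; being a chain, $\{H_i\}$ is filtered from below, so $\Phi(H)=\bigcap_i\Phi(H_i)$ (the inclusion $\Phi(H)\subseteq\Phi(H_i)$ is immediate from $H\leq H_i$, while the reverse inclusion is precisely the filtered-from-below identity used in the proof of Proposition~\ref{descendin-chain-rank}$(b)$). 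Since $w\in\Phi(H_i)$ for all $i$, we get $w\in\Phi(H)$, so $H\in\mathcal{F}$ is a lower bound of the chain. A minimal element $H_0$ then satisfies: $w\in\Phi(H_0)$, so $w$ is non-primitive in $H_0$; and whenever $K\lneq H_0$ contains $w$, minimality forces $K\notin\mathcal{F}$, i.e.\ $w\notin\Phi(K)$, so $w$ is primitive in $K$. Hence $w$ is almost primitive in $H_0$. (The degenerate case $w=1$ gives $H_0=1$, where the defining conditions hold vacuously.)

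For part (b), I would first let $A$ be the algebraic closure of $\langle w\rangle$ in $F$ and $r=\rank(A)$; by Proposition~\ref{freefactors-intersection}, $A\leq_{\textrm{ff}}F$. Because free factors of $A$ are free factors of $F$, the only free factor of $A$ containing $w$ is $A$ itself, so $A$ is the algebraic closure of $\langle w\rangle$ in $A$ and hence $w$ is a test element of $A$ (Corollary~\ref{test-free}); moreover $w\in\Phi(F)\cap A=\Phi(A)$. In $V=F/\Phi(F)$, with basis $\{\bar a_1,\dots,\bar a_n\}$, I extend a basis of the $r$-dimensional subspace $\bar A$ to a basis of $V$ using $m:=n-r$ of the $\bar a_i$, say $\bar a_{i_1},\dots,\bar a_{i_m}$, and set $K=\langle a_{i_1},\dots,a_{i_m}\rangle$. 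Then $\bar A\oplus\bar K=V$, so $\langle A,K\rangle\Phi(F)=F$ and $\langle A,K\rangle=F$; since the free pro-$p$ product $A\amalg K$ is free of rank $r+m=n$ and surjects onto the rank-$n$ group $F$, the Hopfian property yields $F=A\amalg K$.

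Now I fix a test element $u\in\Phi(F(x_1,\dots,x_m))$ and put $v=u(a_{i_1},\dots,a_{i_m})\in K$. Through the isomorphism $F(x_1,\dots,x_m)\cong K$, $v$ is a test element of $K$ lying in $\Phi(K)$, and since $K\leq_{\textrm{ff}}F$ its algebraic closure in $F$ is $K$. Applying part (a) inside the free pro-$p$ group $A$ to $w\in\Phi(A)$ gives $H\leq A$ with $w$ almost primitive in $H$, and applying it inside $K$ to $v\in\Phi(K)$ gives $K'\leq K$ with $v$ almost primitive in $K'$. As $H\leq A$, $K'\leq K$ and $F=A\amalg K$, the subgroup $\langle H,K'\rangle$ equals the free pro-$p$ product $H\amalg K'$, so by Proposition~\ref{almost primitive - free factor} the element $wv$ is almost primitive in $H'':=H\amalg K'$. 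Finally, the algebraic closure of $H''$ in $F$ contains those of $\langle w\rangle$ and $\langle v\rangle$, namely $A$ and $K$, hence contains $\langle A,K\rangle=F$; thus it is $F$, and Lemma~\ref{going up} shows that $wv=w\cdot u(a_{i_1},\dots,a_{i_m})$ is a test element of $F=G$.

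The step that needs the most care, and which I expect to be the main obstacle, is the identification $\langle H,K'\rangle=H\amalg K'$ for the general (not necessarily free-factor) subgroups $H\leq A$ and $K'\leq K$: this is exactly the point where I must invoke the behaviour of subgroups of free pro-$p$ products (the pro-$p$ Kurosh subgroup theorem), rather than any elementary Frattini computation. A secondary technical point is the justification of $\Phi(\bigcap_i H_i)=\bigcap_i\Phi(H_i)$ along the chain in part (a), together with checking the boundary cases $A=F$ (where $m=0$ and no modification is needed) and $w=1$.
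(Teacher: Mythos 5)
Your proof is correct and takes essentially the same approach as the paper's: part (a) is the same Zorn's lemma argument driven by the identity $\Phi(\bigcap_i H_i)=\bigcap_i\Phi(H_i)$ for filtered-from-below families (the paper merely packages it as a \emph{maximal filtered-from-below family} of subgroups rather than a minimal subgroup), and part (b) uses the same ingredients, namely a decomposition $G=A\coprod K$ with $K$ generated by a subset of the basis, part (a) applied inside the factors, the fact that $\langle H, K'\rangle=H\coprod K'$ for subgroups of distinct free factors (the paper cites Ribes--Zalesskii, Corollary~9.1.7, for exactly the step you flag as the main obstacle), Proposition~\ref{almost primitive - free factor}, the algebraic-closure computation, and Lemma~\ref{going up}. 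The only reorganization is that the paper applies part (a) to $w$ in $G$ first and then takes the algebraic closure of the resulting subgroup $H$, whereas you take the algebraic closure of $\langle w\rangle$ first and then apply part (a) inside each free factor; the two orderings are interchangeable.
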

\begin{proof}
$(a)$ Let $Sub(G)$ be the set of closed subgroups of $G$, and let
$$ \mathcal{S}=\{\mathcal{A} \subseteq Sub(G) \mid \mathcal{A} \text{ is filtered from below and } w \in \Phi(K) \text{ for all } K \in \mathcal{A}\}.$$
Note that $\mathcal{S} \neq \emptyset$ since $\{G\} \in \mathcal{S}$.
Order $\mathcal{S}$ by inclusion and let $\mathcal{C} \subseteq \mathcal{S}$ be a chain. It is easy to see that $\bigcup \mathcal{C} \in \mathcal{S}$.
By Zorn's lemma, there is a maximal element $\mathcal{A} \in \mathcal{S}$;  set $H = \bigcap \mathcal{A}$. Since $\mathcal{A}$ is filtered from below,
$\Phi(H) = \bigcap_{K \in \mathcal{A}}\Phi(K)$ and  thus $w \in \Phi(H)$. By maximality of $\mathcal{A}$, $w \notin \Phi(T)$ whenever $T \lneq H$. 
Hence, $w$ is an almost primitive element of $H$.
\\
$(b)$ By part $(a)$, there exists $H \leq G$ such that $w \in H$ and $w$ is an almost primitive element of $H$. 
Let $A$ be the algebraic closure of $H$ in $G$.  Then $G=A \coprod B$ where we can choose $B=\langle a_{i_1}, \ldots, a_{i_m} \rangle $ for some  
$\{a_{i_1}, \ldots, a_{i_m}\} \subseteq \{a_1, \ldots, a_n\}$. 
We may assume that $B \neq \{1\}$, otherwise it follows from Lemma~\ref{going up} that $w$ is a test element of $G$.
Let $u \in\Phi( F(x_1, \ldots, x_m))$ be a test element of $F$; then $u(a_{i_1}, \ldots, a_{i_m})$ is a test element of  $B$. By part $(a)$, there is $K \leq B$ such that $u(a_{i_1}, \ldots, a_{i_m}) \in K$ and $u(a_{i_1}, \ldots, a_{i_m})$ is an almost primitive element of $K$.
Observe that $T=\langle H \cup K \rangle=H \coprod K$ (see \cite[Corollary 9.1.7]{Ribes1}).
By Proposition~\ref{almost primitive - free factor}, $t=w \cdot u(a_{i_1}, \ldots, a_{i_m}) \in T$ is an almost primitive element of $T$.
If $C$ is the algebraic closure of $T$ in G, then by Proposition~\ref{freefactors-intersection}, $H \leq A \cap C \leq_{\textrm{ff}} A$ and $\langle u \rangle \leq B \cap C \leq_{\textrm{ff}} B$;
thus $A, B \leq C$ and $C=G$. It follows from Lemma~\ref{going up} that $t$ is a test element of $G$.

\end{proof}
\begin{thm}
\label{Frattini dense}
Let $F(x_1, \ldots, x_n)$ be a non-abelian free pro-$p$ group of finite rank. Then the set of test elements of $F$ is a dense subset of $\Phi(F)$. 
\end{thm}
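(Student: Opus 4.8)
The plan is to obtain the theorem as a short consequence of Lemma~\ref{a lot of test elements}~(b), with Corollary~\ref{test-free} supplying the containment in $\Phi(F)$. First I would record that, since $F$ is non-abelian (so $n \geq 2$), every test element of $F$ already lies in $\Phi(F)$: if $w \notin \Phi(F)$ then $w$ is primitive, hence lies in the proper free factor $\langle w \rangle$, and so by Corollary~\ref{test-free} it is not a test element. Thus the set of test elements is a subset of $\Phi(F)$, and it remains only to prove that it is dense in $\Phi(F)$. Because $F$ is a finitely generated pro-$p$ group, the cosets $wN$ with $N \trianglelefteq_o F$ form a neighbourhood basis of each point $w$, so density in $\Phi(F)$ amounts to showing that for every $w \in \Phi(F)$ and every $N \trianglelefteq_o F$ the coset $wN$ contains a test element of $F$.

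Fixing such $w$ and $N$, I would apply Lemma~\ref{a lot of test elements}~(b) to $w$, obtaining a subset $\{x_{i_1}, \ldots, x_{i_m}\} \subseteq \{x_1, \ldots, x_n\}$ with the property that $w \cdot u(x_{i_1}, \ldots, x_{i_m})$ is a test element of $F$ for every test element $u$ of $F(x_1, \ldots, x_m)$ contained in $\Phi(F(x_1, \ldots, x_m))$. If $m=0$ then, as in the proof of that lemma (via Lemma~\ref{going up}), $w$ is itself a test element and $w \in wN$, so there is nothing further to prove. The interesting case is $m \geq 1$, where the only genuinely new point is to produce test elements of $F(x_1, \ldots, x_m)$ that lie in $\Phi(F(x_1, \ldots, x_m))$ and converge to the identity.

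For this I would use the explicit family
\[u_j = x_1^{p^j} x_2^{p^j} \cdots x_m^{p^j} \in F(x_1, \ldots, x_m), \qquad j \geq 1.\]
By Proposition~\ref{almost_primitive - examples}~(d) (taking no commutator factors and $\alpha_i = p^j$), each $u_j$ is a test element, and evidently $u_j \in \Phi(F(x_1, \ldots, x_m))$ since every exponent lies in $p\mathbb{Z}_p$. Substituting $x_{i_\ell}$ for $x_\ell$ yields $u_j(x_{i_1}, \ldots, x_{i_m}) = x_{i_1}^{p^j} \cdots x_{i_m}^{p^j} \in F$; as $x_{i_\ell}^{p^j} \to 1$ when $j \to \infty$ and multiplication is continuous, we have $u_j(x_{i_1}, \ldots, x_{i_m}) \to 1$. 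Hence for all sufficiently large $j$ one has $u_j(x_{i_1}, \ldots, x_{i_m}) \in N$, and for such a $j$ the element $t = w \cdot u_j(x_{i_1}, \ldots, x_{i_m})$ is a test element of $F$ by Lemma~\ref{a lot of test elements}~(b) while lying in $wN$. This exhibits a test element in the prescribed neighbourhood of $w$, establishing density.

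I expect essentially no obstacle beyond the machinery already built: the substantive content is concentrated in Lemma~\ref{a lot of test elements}~(b), so once it is available the theorem is almost a corollary. The single new ingredient is the approximation at the identity of the complementary free factor, and the only things to check there are that the chosen words $u_j$ are test elements lying in the Frattini subgroup and converge to $1$ — all immediate from Proposition~\ref{almost_primitive - examples}~(d) and continuity of the group operations. (If one prefers, the converging family could instead be generated from any single fixed test element of $F(x_1,\ldots,x_m)$ via Proposition~\ref{test_powers}, but the explicit family above is the most transparent.)
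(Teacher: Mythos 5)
Your proof is correct and follows essentially the same route as the paper: both reduce the theorem to Lemma~\ref{a lot of test elements}~(b) and then exhibit a test element of the complementary subgroup $\langle x_{i_1},\ldots,x_{i_m}\rangle$ lying in $\Phi$ and deep inside $N$. The only cosmetic difference is that you use the explicit family $x_{i_1}^{p^j}\cdots x_{i_m}^{p^j}\to 1$ from Proposition~\ref{almost_primitive - examples}~(d), whereas the paper takes an arbitrary test element $u\in\Phi(H)$ and substitutes $x_{i_k}^{p^s}\in N$ via Proposition~\ref{test_powers} --- precisely the alternative you note in your closing parenthetical.
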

\begin{proof}
Clearly, all test elements of $F$ are contained in $\Phi(F)$.
Given $w \in \Phi(F)$ and $N \leq_{\textrm{o}}F$, we need to show that $wN$ contains a test element of $F$. 
By Lemma~\ref{a lot of test elements} $(b)$, there exists a subset $\{x_{i_1}, \ldots, x_{i_m}\} \subseteq \{x_1, \ldots, x_n\}$  such that
for every test element $u(x_{i_1}, \ldots, x_{i_m})$ of $H=\langle x_{i_1}, \ldots, x_{i_m} \rangle$ contained in $\Phi(H)$, $w \cdot u(x_{i_1}, \ldots, x_{i_m})$ is a test element of $F$.
Since $|F:N| < \infty$, there exists a natural number $s \geq 0$ such that $x_{i_k}^{p^s} \in N$ for all $1 \leq k \leq m$.
By Proposition~\ref{test_powers}, if $u(x_{i_1}, \ldots, x_{i_m})$ is a test element of $H$, then so is 
$u(x_{i_1}^{p^s}, \ldots, x_{i_m}^{p^s})$.
Therefore, for every test element $u$ of $H$ contained in $\Phi(H)$, $w \cdot u(x_{i_1}^{p^s}, \ldots, x_{i_m}^{p^s}) \in wN$ is a test element of $F$.
\end{proof}

\section{Test elements of Demushkin groups}
\label{demushkin}


\begin{definition} Let $n\geq 1$ be an integer. A pro-$p$ group $G$ is called a \emph{Poincar\'{e} group of dimension $n$}  if it satisfies the following conditions:
\begin{itemize}
\item[(i)] $\textrm{dim} _{\mathbb{F}_p} H^i(G,\mathbb{F}_p) < \infty$  for all $i$,
\item[(ii)] $\textrm{dim} _{\mathbb{F}_p} H^n(G,\mathbb{F}_p)=1$, \textrm{ and }
\item[(iii)] the cup-product $H^i(G,\mathbb{F}_p)\times H^{n-i}(G,\mathbb{F}_p) \to H^n(G,\mathbb{F}_p) \cong \mathbb{F}_p$ is a non-degenerate bilinear form for all $0 \leq i \leq n$.
\end{itemize}
\end{definition}
A Poincar\'{e}  group of dimension 2 is called a \emph{Demushkin group} (see \cite{Serre1}).
 Demushkin groups play an important role in algebraic number theory. For instance, if $k$ is a $p$-adic number field containing a primitive $p$-th root of unity and $k(p)$ is the maximal $p$-extension of $k$, then $\textrm{Gal}(k(p)/k)$ is a Demushkin group (see \cite{Wingberg}). Demushkin groups also appear in topology since pro-$p$ completions of orientable surface groups are Demushkin groups.

Let $G$ be a Demushkin group with $d=d(G)$. Since $\textrm{dim} _{\mathbb{F}_p} H^2(G,\mathbb{F}_p)=1$, i.e.,  $G$ is a one related pro-$p$ group, there is an epimorphism $\pi: F \twoheadrightarrow G$ where $F$ is a free pro-$p$ group of rank $d$ and $\textrm{ker}(\pi)$ is generated as a closed normal subgroup by one element $r \in \Phi(F)=F^p[F, F]$. It follows that either $G/{[G,G]}\cong \mathbb{Z}_p^d$ or $G/{[G,G]}\cong \mathbb{Z}/p^f\mathbb{Z} \times \mathbb{Z}_p^{d-1}$ for some $f \geq 1$. Set $q=p^f$ in the latter and $q=0$ in the former case. Then $d$ and $q$ are two invariants associated to  $G$.  
Demushkin groups have been classified completely by  Demushkin, Serre and Labute (see \cite{Demushkin1}, \cite{Demushkin2}, \cite{Serre2}, and  \cite{Labute}). We summarise the classification of Demushkin groups in the next theorem.

\begin{thm}
\label{demushkin - classification}
Let $G$ be a Demushkin  group and, let $F$, $d$, $q$, and $r$ be as above. 
\begin{itemize}
\item [(i)] If $q \neq 2$, there exists a basis $\{x_1, \cdots , x_d\}$ of $F$ such that  $$r=x_1^q[x_1,x_2]\cdots [x_{d-1}, x_d],$$  
and $d$ is necessarily even.
\item [(ii)] If $q=2$ and $d$ is odd, then there is a basis $\{x_1, \cdots , x_d\}$ of $F$ such that $$r=x_1^2x_2^{2^f}[x_2,x_3]\cdots [x_{d-1}, x_d]$$ for some $f= 2, 3, \ldots, \infty$ $( 2^{f}=0 \text{ when } f=\infty)$.
\item [(iii)] If $q=2$ and $d$ is even, then there is a basis $\{x_1, \cdots , x_d\}$ of $F$ such that either $r=x_1^{2^f+2}[x_1,x_2]\cdots [x_{d-1}, x_d]$ for some $f= 2, 3, \ldots, \infty$ or  $r=x_1^2[x_1,x_2]x_3^{2^f}[x_3, x_4]\cdots [x_{d-1}, x_d]$ for some  $f= 2, 3, \ldots, \infty$ and $d\geq 4$. 
\end{itemize}
\end{thm}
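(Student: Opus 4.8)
The plan is to exploit the description of a Demushkin group $G$ as a one-relator pro-$p$ group whose single defining relation is tightly constrained by Poincar\'e duality, and then to convert the duality axiom into the non-degeneracy of a bilinear form over $\mathbb{F}_p$ whose classification dictates the possible normal forms.

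First I would fix a minimal presentation $1 \to \langle r \rangle^F \to F \to G \to 1$ with $F$ free pro-$p$ of rank $d=d(G)$ and $r \in \Phi(F)$; the hypothesis $\dim_{\mathbb{F}_p} H^2(G,\mathbb{F}_p)=1$ guarantees exactly one relation. The relevant cohomological data are encoded in the image of $r$ in the degree-two layer of the Zassenhaus (dimension) filtration of $F$. Writing
\[
r \equiv \prod_{i=1}^d x_i^{p e_i}\prod_{1 \le i < j \le d}[x_i,x_j]^{c_{ij}} \pmod{\text{higher filtration}},
\]
one identifies, via the standard interpretation of the cohomology of a one-relator pro-$p$ group, the cup product $\chi_i \cup \chi_j \in H^2(G,\mathbb{F}_p)\cong \mathbb{F}_p$ (on the basis $\chi_1,\ldots,\chi_d$ of $H^1$ dual to $x_1,\ldots,x_d$) with the coefficients $c_{ij}$, while the Bockstein contributions are governed by the $e_i$. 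The Poincar\'e duality axiom (iii) is then precisely the statement that this cup-product form is non-degenerate.

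Next I would run the classification of the resulting form. For $p$ odd the cup product on $H^1$ is alternating ($\chi_i \cup \chi_i = 0$), so non-degeneracy forces $d$ even and a symplectic change of basis puts the commutator part into the shape $[x_1,x_2]\cdots[x_{d-1},x_d]$; the $p$-power contributions, which do not affect the alternating form, are then consolidated by a further automorphism into a single term $x_1^q$, where $q$ is read off from $G/\overline{[G,G]}$, yielding case (i). For $q=2$ (so $p=2$) the form is symmetric and the square map supplies a genuine quadratic refinement; here the finer classification of quadratic forms over $\mathbb{F}_2$ --- in particular the dichotomy between forms that are and are not alternating --- produces the parity split between $d$ odd and $d$ even and the two sub-cases of (ii) and (iii).

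The step I expect to be the main obstacle is passing from the leading degree-two form to the exact normal form for $r$ itself: one must show that every higher-order term of $r$ can be removed by a continuous automorphism of $F$. This rigidity is the substance of Labute's contribution; it is carried out by an inductive descent along the Zassenhaus filtration, using the non-degeneracy of the leading form to absorb successive perturbations. Equivalently, one shows that the leading form is an inert (``mild'') element, so that the associated graded restricted Lie algebra of $G$ is the quotient of the free one by the ideal it generates, whence $G$ is determined up to isomorphism by $(d,q)$ together with the residual $\mathbb{F}_2$-form invariant. The case $q=2$ is the delicate one, since the interaction of the square map with the commutator structure makes both the form classification and the inductive normalization considerably more intricate, which is exactly why it was settled by Serre and Labute after Demushkin's original treatment of $q \neq 2$.
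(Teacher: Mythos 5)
The first thing to note is that the paper itself contains no proof of Theorem~\ref{demushkin - classification}: it is stated explicitly as a summary of the classification due to Demushkin, Serre and Labute, with citations to their papers. So the only meaningful comparison is with that literature, whose broad strategy your sketch does capture correctly: realize $G$ as a one-relator pro-$p$ group, identify the leading term of $r$ along the Zassenhaus filtration with the cup-product form on $H^1(G,\mathbb{F}_p)$, invoke non-degeneracy (Poincar\'e duality), classify the form, and then normalize $r$ by automorphisms of $F$.

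As a proof, however, your proposal has two genuine gaps. First, the step you yourself flag as ``the main obstacle'' --- showing that an automorphism of $F$ carries $r$ exactly to the stated normal form, and not merely to it modulo deeper terms of the filtration --- is precisely the substance of the theorem, and your sketch only asserts it. Nothing in the proposal shows that non-degeneracy of the leading form suffices to absorb every higher-order perturbation; in the actual proofs this is a delicate limit argument in which one builds a convergent sequence of automorphisms of $F$, and it occupies most of Demushkin's and Labute's papers. (Also, ``mild/inert'' elements belong to Labute's much later work on mild pro-$p$ groups and play no role in this classification.) Second, and more seriously, your treatment of $q=2$ is off target: the invariant $f=2,3,\ldots,\infty$ in cases (ii) and (iii), and the dichotomy between the two families in case (iii), are \emph{not} invariants of any quadratic form over $\mathbb{F}_2$. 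No mod-$2$ or Bockstein datum can distinguish, say, $x_1^2x_2^{4}[x_2,x_3]\cdots$ from $x_1^2x_2^{8}[x_2,x_3]\cdots$, since for $f\geq 2$ the term $x_2^{2^f}$ lies in deeper filtration and the degree-two leading forms coincide. These groups are separated by a genuinely $2$-adic invariant --- the image of the dualizing character $\chi\colon G \to \mathbb{Z}_2^{*}$, equivalently an analysis of $r$ modulo arbitrarily high powers of $2$ --- which your sketch never introduces. So even granting the normalization machinery, the argument as outlined could not produce the lists in (ii) and (iii).
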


One of the main features of Demushkin groups is that every subgroup of finite index of a Demushkin group is also a Demushkin group and every subgroup of infinite index  is free pro-$p$ (cf. \cite{Serre1} or  \cite{Wingberg}). Since Demushkin groups are of cohomological dimension 2, using the  multiplicativity of the Euler-Poincar\'{e} characteristic one can show that $d(H)-2 = [G:H](d(G)-2)$ for every finite index subgroup $H$ of a Demushkin group $G$ (see \cite[Theorem 3.9.15]{Wingberg}).  Moreover, it is worth mentioning that a Demushkin group G is $p$-adic analytic if and only if $d(G)=2$.


\begin{lem}\label{extraction-roots}
Let $G$ be a Demushkin group with $d(G)> 2$ and let $u$ and $v$ be elements in $G$. If $u^{\alpha} = v^{\alpha} $ for some $0 \neq \alpha \in \mathbb{Z}_p$, then $u=v$.
Moreover, if $u$ is a test element of $G$, then so is $u^{\alpha}$.
\end{lem}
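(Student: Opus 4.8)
The plan is to reduce the uniqueness of roots to the corresponding property of free pro-$p$ groups, which the paper has already exploited (as in the proof of Proposition~\ref{test-power}), and then to deduce the statement about test elements exactly as in that proposition. The key observation is that the subgroup $H=\langle u, v\rangle$ generated by $u$ and $v$ is automatically free pro-$p$, so that the hypothesis $u^{\alpha}=v^{\alpha}$ can be analysed inside a free pro-$p$ group where unique extraction of roots is available.

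First I would show that $H$ has infinite index in $G$. Suppose to the contrary that $[G:H]<\infty$. Then $H$ is itself a Demushkin group, and by the Euler--Poincar\'e characteristic formula quoted above,
\[d(H)-2=[G:H]\,(d(G)-2).\]
Since $d(G)>2$, the right-hand side is at least $[G:H]\geq 1$, whence $d(H)\geq 3$. This contradicts the fact that $H$ is generated by the two elements $u$ and $v$, so that $d(H)\leq 2$. Therefore $H$ has infinite index, and by the structure theory of Demushkin groups every subgroup of infinite index is free pro-$p$; in particular $H$ is free pro-$p$.

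Now $u,v\in H$ satisfy $u^{\alpha}=v^{\alpha}$ with $0\neq\alpha\in\mathbb{Z}_p$, and unique extraction of roots in free pro-$p$ groups gives $u=v$, which proves the first assertion. For the second assertion I would argue as in Proposition~\ref{test-power}: let $u$ be a test element of $G$ and let $\varphi:G\to G$ be an endomorphism with $\varphi(u^{\alpha})=u^{\alpha}$. Then $\varphi(u)^{\alpha}=\varphi(u^{\alpha})=u^{\alpha}$, so applying the uniqueness of roots just established (to the pair $u$ and $\varphi(u)$) yields $\varphi(u)=u$. As $u$ is a test element, $\varphi$ is an automorphism, and hence $u^{\alpha}$ is a test element of $G$.

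The only real content is the index dichotomy in the first step; once $H$ is known to be free pro-$p$, everything is formal. The rank hypothesis $d(G)>2$ enters exactly here, to force the right-hand side of the Euler--Poincar\'e formula to be positive and thereby rule out the finite-index case. Degenerate situations in which $u$ or $v$ is trivial cause no difficulty, since free pro-$p$ groups are torsion-free.
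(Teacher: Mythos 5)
Your proposal is correct and follows essentially the same route as the paper: the paper's (very terse) proof likewise observes that $\langle u,v\rangle$ is a free pro-$p$ group of rank at most $2$ and then invokes unique extraction of roots, with the test-element claim following as in Proposition~\ref{test-power}. Your write-up simply makes explicit the finite-index dichotomy (via the Euler--Poincar\'e formula $d(H)-2=[G:H](d(G)-2)$ and Serre's theorem that infinite-index subgroups of Demushkin groups are free pro-$p$) that the paper leaves implicit.
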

\begin{proof}
Note that $\langle  u, v \rangle$ is a free pro-$p$ group of rank at most 2. Thus the results follow from unique extraction of roots in free pro-$p$ groups.
\end{proof}

Note that if $G$ is a torsion free pro-$p$ group then every proper retract of $G$ is of infinite index. To see this, suppose that $r: G \twoheadrightarrow H$ is a retraction, $H \lneq G$ and $[G:H]<\infty$. Let $g \in G\setminus H$. Then $g{r(g)}^{-1} \neq 1$ and $r(g{r(g)}^{-1}) = 1$. Thus $\textrm{ker}(r) \neq 1$ and so it is an infinite group. Since $[\textrm{ker}(r) : {H\cap \textrm{ker}(r)}] < \infty$, there exists some $1\neq h \in {H\cap \textrm{ker}(r)}$, which is  not possible.

\begin{lem}\label{retract-Demushkin}
Let $G$ be a Demushkin group with $d(G)=n$ and $H$ be a proper retract of $G$. Then $d(H) \leq \frac{n}{2}$.
\end{lem}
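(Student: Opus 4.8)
The plan is to assemble three facts that are already in hand: that a Demushkin group is torsion free, that its infinite-index subgroups are free pro-$p$, and Sonn's bound on the rank of free homomorphic images of Demushkin groups (quoted in the introduction). The retraction itself will do the rest of the work, so no genuinely new argument is needed.

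First I would note that $G$ is torsion free. Indeed, a Demushkin group is a Poincar\'e duality group of dimension $2$, hence $\cd(G)=2 < \infty$, and a pro-$p$ group of finite cohomological dimension is torsion free. Since $H$ is a \emph{proper} retract of the torsion free pro-$p$ group $G$, the remark immediately preceding this lemma applies and shows that $H$ has infinite index in $G$. By the structural dichotomy for Demushkin groups recalled above (every finite-index subgroup is Demushkin, while every infinite-index subgroup is free pro-$p$), it follows that $H$ is a free pro-$p$ group, and in particular $d(H)=\rank(H)$.

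Finally, let $r: G \twoheadrightarrow H$ be a retraction. Then $r$ is an epimorphism onto a free pro-$p$ group, so $H$ is a free homomorphic image of $G$. Applying Sonn's theorem to $G$ with $d(G)=n$ gives $\rank(H) \leq \tfrac{n}{2}$, and therefore $d(H) \leq \tfrac{n}{2}$, as required.

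There is no isolated computational obstacle here: the statement is essentially a synthesis of cited results. The one step that genuinely carries the argument is the passage to infinite index, where torsion-freeness is used to rule out finite-index retracts; this is precisely what guarantees that $H$ is free pro-$p$ and hence falls within the scope of Sonn's rank bound. Were $H$ allowed to have finite index, it would itself be Demushkin rather than free, and the bound would fail, so isolating this point is the crux of the proof.
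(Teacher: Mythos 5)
Your proof is correct and follows essentially the same route as the paper: the remark before the lemma forces a proper retract of a torsion free pro-$p$ group to have infinite index, the Demushkin dichotomy makes $H$ free pro-$p$, and Sonn's theorem bounds its rank by $\frac{n}{2}$. The only addition is your explicit justification that Demushkin groups are torsion free via finite cohomological dimension, which the paper leaves implicit.
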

\begin{proof}
From the above observation we have that $H$ is of infinite index in $G$. Thus  $H$ is a free pro-$p$ group. Since $H$ is an epimorphic image of $G$, it follows from \cite{Sonn} that $d(H) \leq \frac{n}{2}.$
\end{proof}

\begin{lem}\label{free-Demushkin}
Let $G$ be a Demushkin group with $d(G)=n > 2$,  $\{ x_1, x_2, ..., x_n  \}$ a minimal generating set of $G$, and $k \leq n$  a positive integer. Then for each $2 \leq  l \leq k$, the pro-$p$ group $\langle   x_1^p,  x_2^p, ..., x_l^p, x_{l+1},..., x_k \rangle$ is a free pro-$p$ group of rank $k$. Moreover, if $k<n$, one can take $0 \leq  l \leq k$ .
\end{lem}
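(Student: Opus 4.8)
The plan is to treat separately the two assertions—that $K:=\langle x_1^p,\dots,x_l^p,x_{l+1},\dots,x_k\rangle$ is free pro-$p$, and that its rank is exactly $k$—and to reduce the rank statement, whenever possible, to a purely free pro-$p$ fact.

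First I would dispose of freeness together with the bound $d(K)\le k$. The images $\bar x_{l+1},\dots,\bar x_k$ are $k-l$ of the basis vectors of $G/\Phi(G)$, while each $x_i^p$ lies in $\Phi(G)$; hence $K\Phi(G)/\Phi(G)$ has dimension $k-l<n$ (recall $0\le l\le k$, with $l\ge 2$ when $k=n$), so $K\ne G$. If $K$ had finite index $m$ in $G$, then the multiplicativity of the Euler--Poincar\'e characteristic gives $d(K)-2=m\,(n-2)$; since $K$ is generated by $k\le n$ elements this forces $m(n-2)\le n-2$, hence $m=1$ and $K=G$, a contradiction. Therefore $K$ has infinite index and is free pro-$p$ with $d(K)\le k$, and it remains to prove that the given $k$ generators are independent modulo $\Phi(K)$, i.e. that $d(K)=k$.

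The key auxiliary fact is purely about free pro-$p$ groups: if $\Lambda$ is free pro-$p$ with basis $\{z_1,\dots,z_k\}$, then $\{z_1^p,\dots,z_l^p,z_{l+1},\dots,z_k\}$ is a basis of the subgroup $\Lambda_0$ it generates, which is free as a subgroup of $\Lambda$. To prove this I would exhibit, for each listed generator, a character of $\Lambda$ detecting it: the dual characters $\psi_j:\Lambda\to\mathbb{F}_p$ with $\psi_j(z_i)=\delta_{ij}$ (for $l<j\le k$) detect $z_{l+1},\dots,z_k$, while for $1\le i\le l$ the homomorphism $\chi_i:\Lambda\to\mathbb{Z}/p^2\mathbb{Z}$ with $\chi_i(z_i)=1$ and $\chi_i(z_m)=0$ for $m\ne i$ sends $z_i^p$ to $p$ and every other listed generator to $0$; composing $\chi_i$ with the identification $p\mathbb{Z}/p^2\mathbb{Z}\cong\mathbb{F}_p$ yields a character detecting $z_i^p$. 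Restricting these $k$ characters to $\Lambda_0$ and evaluating them on the $k$ generators produces the identity matrix, so the generators are independent modulo $\Phi(\Lambda_0)$ and $\Lambda_0$ is free of rank $k$. This settles the case $k<n$ at once: there $J:=\langle x_1,\dots,x_k\rangle$ is proper, hence free pro-$p$ of rank $k$ with basis $\{x_1,\dots,x_k\}$ by the index computation above, and $K\le J$ is obtained from this basis by exactly the substitution in the auxiliary fact, giving rank $k$ for every $0\le l\le k$.

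The remaining and genuinely harder case is $k=n$, where $J=\langle x_1,\dots,x_n\rangle=G$ is not free and the characters $\chi_i$ need not exist: $\chi_i:G\to\mathbb{Z}/p^2\mathbb{Z}$ with $\chi_i(x_j)=\delta_{ij}$ descends from $G$ only when the $x_i$-exponent sum $c_i$ of the relator satisfies $v_p(c_i)\ge 2$, which can fail exactly when $q=p$. For the ``good'' indices the restriction of $\chi_i$ still detects $x_i^p$ inside $K$; the difficulty is concentrated in the ``bad'' indices, for which $x_i^p$ is trivial in $G^{\mathrm{ab}}/p^2$ and hence invisible to every homomorphism of $G$ into an abelian $p$-group of exponent $p^2$. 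To detect these I would pass to the class-two quotient $Q=G/\big(\gamma_3(G)\,\gamma_2(G)^p\,G^{p^2}\big)$, in which every bad $x_i^p$ lands in the elementary abelian commutator subgroup $\gamma_2(Q)$, and read off its commutator-expression from the relator; since $[x_i^p,\,\cdot\,]$ vanishes in $Q$, the Frattini subgroup of $K$ meets $\gamma_2(Q)$ only in the commutators of the \emph{plain} generators of $K$, and the point is to show that the commutator-expressions of the bad $x_i^p$ are linearly independent of those. I expect this class-two computation to be the main obstacle, and it is exactly where the defining Demushkin feature—the non-degeneracy of the cup product $H^1(G,\mathbb{F}_p)\times H^1(G,\mathbb{F}_p)\to H^2(G,\mathbb{F}_p)\cong\mathbb{F}_p$—must enter to guarantee that the relevant commutators do not collapse; the hypothesis $l\ge 2$ (which in particular keeps $K$ proper) is what leaves enough room to combine the characters attached to the good indices with this commutator argument for the bad ones and conclude $d(K)=n$.
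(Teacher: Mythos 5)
Your opening reductions are correct and coincide with the paper's first step: properness of $K$ follows from the dimension count in $G/\Phi(G)$, and the multiplicativity of the Euler--Poincar\'e characteristic rules out finite index, so $K$ is free pro-$p$ with $d(K)\le k$. Your auxiliary fact about free pro-$p$ groups is also correct: the dual characters into $\mathbb{F}_p$ together with the homomorphisms into $\mathbb{Z}/p^2\mathbb{Z}$ (restricted to the subgroup, which lands in $p\mathbb{Z}/p^2\mathbb{Z}\cong\mathbb{F}_p$) do show that $\{z_1^p,\dots,z_l^p,z_{l+1},\dots,z_k\}$ is a basis of the subgroup it generates, and this settles the case $k<n$ completely, as well as $k=n$ whenever every exponent sum of the relator is divisible by $p^2$ (equivalently, $q\ne p$).

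The case $k=n$ with $q=p$, however, is a genuine gap, and you flag it yourself: the class-two quotient computation you describe as ``the main obstacle'' is never carried out, so what you offer there is a plan, not a proof. Moreover the plan starts from an inaccurate premise: if several exponent sums satisfy $v_p(c_i)=1$ (e.g.\ exponent vector $(p,p,0,\dots,0)$ with respect to the chosen generating set), it is not true that each bad $x_i^p$ becomes trivial in $G^{\mathrm{ab}}/p^2$, and your sketch would have to treat several bad indices simultaneously, which it does not address. The paper resolves this case with a purely group-theoretic trick that needs no cohomology. When $G/[G,G]\cong \mathbb{Z}/p\mathbb{Z}\times\mathbb{Z}_p^{n-1}$, at most one generator --- say $x_1$ --- can satisfy $x_1^p\in[G,G]$ (two such generators would be dependent modulo $\Phi(G)$). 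One then replaces $x_1^p$ by $x_1$ and shows, by the same abelianization count as in the $q\ne p$ case, that $K'=\langle x_1,x_2^p,\dots,x_l^p,x_{l+1},\dots,x_k\rangle$ is free of rank $k$ with the displayed generators as a basis; here the hypothesis $l\ge 2$ is exactly what keeps $K'$ proper (its image in $G/\Phi(G)$ has dimension $k-l+1\le n-1$), so its role is to make this substitution possible, not to ``leave room'' for a character-plus-commutator argument as you suggest. Finally, take the maximal subgroup $M$ of $K'$ containing $x_2^p,\dots,x_l^p,x_{l+1},\dots,x_k$ but not $x_1$; using the Schreier transversal $\{1,x_1,\dots,x_1^{p-1}\}$, the set $\{x_1^p,x_2^p,\dots,x_l^p,x_{l+1},\dots,x_k\}$ is part of a Schreier basis of the free pro-$p$ group $M$, so the subgroup $K$ it generates is a free factor of $M$ of rank $k$. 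Filling your gap essentially requires this (or an equally concrete) argument; as written, the hardest subcase of the lemma remains unproved.
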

\begin{proof}
Let $H=  \langle   x_1^p,  x_2^p, ..., x_l^p, x_{l+1},..., x_k \rangle$.  Since $k-l < n$ and $x_i^p \in \Phi(G)$ for all $1\leq i \leq l$, it follows that $H \neq G$. Moreover,  $H$ is of infinite index in $G$ and therefore free. Indeed, if $ 1 \neq [G: H] < \infty$, then $d(H) = 2+ [G : H] (d(G) - 2)> n \geq d(H)$ , which is a contradiction. 
 Suppose that $G/{[G,G]}$ is not isomorphic to ${\mathbb{Z}/p\mathbb{Z}} \times \mathbb{Z}_p^{n-1}$. Then the cardinality of a minimal generating set of the  image of $ H $  in $G/{[G,G]}$ is $k$, which implies that $H$ is free of rank $k$. Now suppose that $G/{[G,G]} \cong {\mathbb{Z}/p\mathbb{Z}} \times \mathbb{Z}_p^{n-1}$. Then there is at most one $x_i$ such that $x_i^p$ is trivial modulo $[G,G]$. Without loss of generality we may assume that $i=1$. In the same way as above it can be seen that $K= \langle   x_1,  x_2^p, ..., x_l^p, x_{l+1},..., x_k \rangle$ is a free pro-$p$ group. Moreover, the cardinailty of a minimal generating set of the  image of $K$ in $G/{[G,G]}$ is $k$. Thus $K$ is a free pro-$p$ group of rank $k$. Let $M$ be a maximal subgroup of $K$ satisfying $\{  x_2^p, ..., x_l^p, x_{l+1},..., x_k \} \subseteq M$ but $x_1 \notin M$. Then using as a Schreier transversal for $M$ the set $\{1, x_1, x_1^2, ..., x_1^{p-1} \}$, one can easily see that $\{x_1^p,  x_2^p, ..., x_l^p, x_{l+1},..., x_k \}$ is a part of a basis of $M$. Hence, $H$ is a free pro-$p$ group of rank $k$.

\end{proof}

\begin{thm}\label{Demushkin-test}
Let $G$ be a Demushkin group with $d(G)=n > 2$ and $\{ x_1, x_2, ..., x_n  \}$ be a minimal generating set of $G$. Fix a positive integer $k$ such that $\left \lfloor{\frac{n}{2}}\right \rfloor + 1 \leq k  \leq n $. For each $1\leq i \leq k$, let $\alpha_i \in \mathbb{Z}_p \setminus \{0\}$ and suppose that at least $ k- \left \lfloor{\frac{n}{2}}\right \rfloor  +1 $ $\alpha_i$'s belong to $p\mathbb{Z}_p \setminus \{0\}$. Moreover, if $p=2$ and $n \leq 4$, suppose that $k<n$. If $w(y_1, y_2, ..., y_k)$ is a test element of the free pro-$p$ group $F(y_1, y_2, ..., y_k)$, then $t = w(x_1^{\alpha_1}, x_2^{\alpha_2}, ...,  x_k^{\alpha_k})$ is a test element of $G$.
\end{thm}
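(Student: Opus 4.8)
The plan is to show that the subgroup generated by the relevant powers of the $x_i$ is a free pro-$p$ group of the appropriate rank, realize $t$ as the value of a known test element inside it, and then push the test element up to all of $G$ via the algebraic closure machinery. First I would fix notation: after a possible relabeling of $\{x_1,\ldots,x_k\}$, assume that the first $m:=k-\lfloor n/2\rfloor+1$ of the exponents $\alpha_i$ lie in $p\mathbb{Z}_p\setminus\{0\}$. Writing $\alpha_i=p^{s_i}\beta_i$ with $\beta_i\in\mathbb{Z}_p\setminus p\mathbb{Z}_p$, I would first pass to the subgroup $K=\langle x_1^{\beta_1},\ldots,x_k^{\beta_k}\rangle$; since the $x_i^{\beta_i}$ have the same image as $x_i$ modulo $\Phi(G)$ (as $\beta_i$ is a unit), $K$ is generated by elements whose images in $G/[G,G]$ are still independent. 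The key structural input is Lemma~\ref{free-Demushkin}: because at least $m$ of the generators are being replaced by genuine $p$-th powers (namely the $x_i^{\alpha_i}$ with $\alpha_i\in p\mathbb{Z}_p$), the subgroup $H=\langle x_1^{\alpha_1},\ldots,x_k^{\alpha_k}\rangle$ sits inside a free pro-$p$ group of rank $k$ of the form covered by that lemma, and in fact $\{x_1^{\alpha_1},\ldots,x_k^{\alpha_k}\}$ is a basis of $H$, so $H$ is free pro-$p$ of rank $k$.

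Granting that $H$ is free pro-$p$ of rank $k$ with the $x_i^{\alpha_i}$ as a basis, I would invoke the hypothesis that $w(y_1,\ldots,y_k)$ is a test element of $F(y_1,\ldots,y_k)$. The isomorphism $F(y_1,\ldots,y_k)\to H$ sending $y_i\mapsto x_i^{\alpha_i}$ carries $w$ to $t=w(x_1^{\alpha_1},\ldots,x_k^{\alpha_k})$, so $t$ is a test element of $H$. Now the task is to promote this to a test statement in $G$. The natural tool is Lemma~\ref{going up}, which says that a test element of a subgroup $H\leq F$ whose algebraic closure in $F$ is all of $F$ is automatically a test element of $F$; however, $G$ is a Demushkin group, not a free pro-$p$ group, so Lemma~\ref{going up} does not apply verbatim. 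Instead I would argue directly through retracts using Theorem~\ref{test-retract} and Lemma~\ref{retract-Demushkin}.

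Here is the argument I expect to work. Suppose $t$ is not a test element of $G$. By Theorem~\ref{test-retract}, $t$ lies in a proper retract $R$ of $G$, and by Lemma~\ref{retract-Demushkin} $R$ is free pro-$p$ of rank $d(R)\leq\lfloor n/2\rfloor$. Consider the retraction $\rho\colon G\to R$ and its restriction to the free pro-$p$ group $H$. The composite fixes $t$, and inside $H$ we may analyze the images $\rho(x_i^{\alpha_i})$. The point is that $H$ has rank $k\geq\lfloor n/2\rfloor+1>d(R)$, so $\rho|_H\colon H\to R$ cannot be injective on $H/\Phi(H)$; yet $t=w(x_1^{\alpha_1},\ldots,x_k^{\alpha_k})$ is a test element of $H$, and the subgroup $\langle \rho(x_1^{\alpha_1}),\ldots,\rho(x_k^{\alpha_k})\rangle\leq R$ containing $\rho(t)=t$ forces, via Corollary~\ref{test-free} applied in $R$, that $t$ is not contained in any proper free factor of $R$, hence $R$ is the algebraic closure of $\langle t\rangle$ in $R$ and $d(R)\geq\operatorname{rk}(\text{alg.\ closure of }\langle t\rangle)$. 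The main obstacle, and the step I would spend the most care on, is precisely this rank bookkeeping: one must show that the test-element property of $t$ in the rank-$k$ group $H$ is incompatible with $t$ living in a free factor of the smaller-rank group $R$. The cleanest route is to observe that $H$ itself, being the algebraic closure of $\langle t\rangle$ in $H$ by the paraphrase of Corollary~\ref{test-free}, has rank $k$; the composite $H\hookrightarrow G\xrightarrow{\rho}R$ fixes $t$, so $t$ is a test element of the retract $\rho(H)$ forcing $d(\rho(H))\geq k>\lfloor n/2\rfloor\geq d(R)$, which is absurd since $\rho(H)\leq R$. This contradiction shows $t$ has no proper retract containing it and is therefore a test element of $G$.
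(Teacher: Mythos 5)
Your opening move is fine and coincides with the paper's: inside the subgroup $A=\langle x_1^p,\ldots,x_m^p,x_{m+1},\ldots,x_k\rangle$ (with $m=k-\lfloor n/2\rfloor+1\geq 2$), which is free of rank $k$ by Lemma~\ref{free-Demushkin}, the element $t$ is a test element (Proposition~\ref{test_powers}), and the contradiction setup via Theorem~\ref{test-retract} and Lemma~\ref{retract-Demushkin} is also the paper's. The fatal gap is the bridge between the retract $R$ and this free subgroup. You claim that since the composite $H\hookrightarrow G\xrightarrow{\;\rho\;}R$ fixes $t$, ``$t$ is a test element of the retract $\rho(H)$'', that this forces $d(\rho(H))\geq k$, and that this is absurd because $\rho(H)\leq R$. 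All three steps are unjustified, and the underlying principles are false. Being a test element is a statement about \emph{endomorphisms} of a fixed group; it does not transfer along a homomorphism $H\to \rho(H)$ whose target is not contained in $H$, even if that homomorphism fixes $t$. Moreover, the rank of a free group in which $t$ is a test element is not an invariant of $t$: by Lemma~\ref{going up}, any test element $t$ of a proper open subgroup $K\leq_o F(x,y)$ (a free pro-$p$ group of rank $\geq p+1$) is automatically a test element of $F(x,y)$ itself, of rank $2$, and the inclusion $K\hookrightarrow F(x,y)$ is a homomorphism fixing $t$ with target of much smaller rank. Finally, even granting $d(\rho(H))\geq k$, there is no absurdity in $\rho(H)\leq R$: subgroups of a free pro-$p$ group routinely have rank exceeding that of the ambient group (open subgroups already do), so $d(\rho(H))>d(R)$ is not a contradiction. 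The same objections apply to your earlier sentence asserting that $t$ ``is not contained in any proper free factor of $R$'' --- no reason is given, and ruling this out is precisely what the theorem asserts.

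What is missing is the paper's central construction, which is exactly what makes a rank comparison legitimate: one must place $A$ and $R$ inside a common \emph{free} subgroup of $G$. Taking a minimal generating set $\{u_1,\ldots,u_s\}$ of $R$ ($s\leq\lfloor n/2\rfloor$), the paper forms $L=\langle x_1^p,\ldots,x_m^p,x_{m+1},\ldots,x_k,u_1,\ldots,u_s\rangle$, and uses the formula $d(U)=2+[G:U](n-2)$ for open subgroups of Demushkin groups to show $L$ cannot be open --- this is where the hypothesis ``$k<n$ when $p=2$ and $n\leq 4$'' enters, a hypothesis your argument never uses, which is itself a warning sign. Hence $L$ has infinite index and is free pro-$p$. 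Then $R$ is a retract of $L$ by Lemma~\ref{retract-intersection}(b), hence a free factor of $L$ by Corollary~\ref{test-free}; Proposition~\ref{freefactors-intersection} yields $A\cap R\leq_{\textrm{ff}}A$; and unique extraction of roots (Lemma~\ref{extraction-roots}) shows $A\not\subseteq R$, since $A\subseteq R$ would give $x_i\in R$ for all $i\leq k$ and so $d(R)\geq k>\lfloor n/2\rfloor$. Thus $t$ lies in a \emph{proper} free factor of $A$, contradicting that $t$ is a test element of $A$. Your outline could be repaired by inserting this construction, but as written the central deduction is a non sequitur.
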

\begin{proof}
Without loss of generality we may assume that $\alpha_1, \alpha_2, ..., \alpha_{k -  \left \lfloor{\frac{n}{2}}\right \rfloor +1} \in p\mathbb{Z}_p$. Suppose that  $t = w(x_1^{\alpha_1}, x_2^{\alpha_2}, ...,  x_k^{\alpha_k})$ is not a test element of $G$. Then by Theorem \ref{test-retract}, $t$ belongs to a proper retract $R$ of $G$. By Lemma \ref{retract-Demushkin}, we have $s = d(R) \leq \left \lfloor{\frac{n}{2}}\right \rfloor $. Let $\{ u_1, u_2, ..., u_s \}$ be a minimal generating set of $R$ and consider the group 
$$H =\langle x_1^p,  x_2^p, ..., x_{k - \left \lfloor{\frac{n}{2}}\right \rfloor  +1}^p, x_{k-\left \lfloor{\frac{n}{2}}\right \rfloor  +2},..., x_k, u_1, ..., u_s \rangle.$$  
Note that $H \neq G$ since $x_i^p \in \Phi(G)$ for all $1 \leq i \leq  k - \left \lfloor{\frac{n}{2}}\right \rfloor + 1$ and $s + [k - (k - \left \lfloor{\frac{n}{2}}\right \rfloor  +2) +1 ]< n$. We claim that $H$ is not an open subgroup of $G$. Indeed, if $H$ is a proper open subgroup of $G$, then 
\begin{displaymath}
d(H) = 2 + [G:H](d(G) - 2) \geq 2 + [G:H](n - 2)  > n + \left \lfloor{\frac{n}{2}}\right \rfloor  \geq k+s,
\end{displaymath}
unless $p=2$ and $n \leq 4$, in which case by assumption $k < n$ and thus $d(H) \geq n + \left \lfloor{\frac{n}{2}}\right \rfloor  > k+s$. Since $k+s \geq d(H)$, we have a contradiction. Hence, $H$ has infinite index in $G$ and thus it is a free pro-$p$ group.  Moreover, by Lemma \ref{retract-intersection} $(b)$, $R$ is a retract of $H$. Let $A =\langle  x_1^p,  x_2^p, ..., x_{k - \left \lfloor{\frac{n}{2}}\right \rfloor +1}^p, x_{k- \left \lfloor{\frac{n}{2}}\right \rfloor +2},..., x_k \rangle$.  We claim that $R \cap A \neq A$. Suppose to the contrary that $A \subseteq R$. Then $ x_i^p \in R$ for all $1 \leq i \leq  k - \left \lfloor{\frac{n}{2}}\right \rfloor + 1$. Let $r: G \to R$ be a retraction. Then $ x_i^p = r( x_i^p) =  {r(x_i)}^p$ , so by Lemma \ref{extraction-roots}, $x_i = r(x_i) \in R$ for all $1 \leq i \leq k$.   
Since $\{x_1, \ldots, x_k\}$ is a part of a minimal generating set of $G$, it is also a part of a minimal generating set of $R$, which is impossible since $s \leq \left \lfloor{\frac{n}{2}}\right \rfloor<k$. Hence, by Proposition \ref{freefactors-intersection} and Corollary \ref{test-free},  $A \cap R$ is a proper retract of $A$ and  $t = w(x_1^{\alpha_1}, x_2^{\alpha_2}, ...,  x_k^{\alpha_k}) \in A\cap R$ is not a test element of $A$.  However, $$t = w( {(x_1^p)}^{\beta_1}, {(x_2^p)}^{\beta_2}, ...,  {(x_{k - \left \lfloor{\frac{n}{2}}\right \rfloor +1}^p)}^{\beta_{k - \left \lfloor{\frac{n}{2}}\right \rfloor +1}}, x_{{k - \left \lfloor{\frac{n}{2}}\right \rfloor +2}}^{\alpha_{{k - \left \lfloor{\frac{n}{2}}\right \rfloor +2}}}, ..., x_k^{\alpha_k})$$ for some $\beta_j \in \mathbb{Z}_p \setminus \{0 \}, 1 \leq j \leq k -  \left \lfloor{\frac{n}{2}}\right \rfloor +1$. By  Lemma \ref{free-Demushkin}, $A$ is a free pro-$p$ group of rank $k$. Therefore, by Corollary \ref{test_powers},  $t$ is a test element of $A = F(x_1^p,  x_2^p, ..., x_{k - \left \lfloor{\frac{n}{2}}\right \rfloor +1}^p, x_{k- \left \lfloor{\frac{n}{2}}\right \rfloor +2},..., x_k)$, which is a contradiction. Hence, $t$ is a test element of $G$. 
\end{proof}


\begin{thm}\label{Demushkin-test-2}
Let $G$ be a Demushkin  pro-$2$ group with $3 \leq d(G) = n \leq 4$, and let $\{ x_1, x_2, ..., x_n  \}$ be a minimal generating set of $G$. For each $1\leq i \leq n$ let $\alpha_i \in 2\mathbb{Z}_2 \setminus \{0\}$. If $w(y_1, y_2, ..., y_n)$ is a test element of the free pro-$2$ group $F(y_1, y_2, ..., y_n)$, then $t = w(x_1^{\alpha_1}, x_2^{\alpha_2}, ...,  x_n^{\alpha_n})$ is a test element of $G$.
\end{thm}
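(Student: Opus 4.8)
The plan is to reduce the statement to the free pro-$2$ group $A=\langle x_1^2,\dots,x_n^2\rangle$ and then transfer the conclusion up to $G$ using the retract machinery, in the spirit of the proof of Theorem~\ref{Demushkin-test} but replacing the global index estimate there by a localized one. First I would note that since each $\alpha_i\in 2\Z_2\setminus\{0\}$, we may write $\alpha_i=2\beta_i$ with $\beta_i\in\Z_2\setminus\{0\}$, so that $x_i^{\alpha_i}=(x_i^2)^{\beta_i}$ and hence $t=w((x_1^2)^{\beta_1},\dots,(x_n^2)^{\beta_n})$. By Lemma~\ref{free-Demushkin} (taking $l=k=n$), the subgroup $A=\langle x_1^2,\dots,x_n^2\rangle$ is a free pro-$2$ group of rank $n$ with basis $\{x_1^2,\dots,x_n^2\}$, and Proposition~\ref{test_powers} then shows that $t$ is a test element of $A$; in particular $t\neq 1$, and $A\leq\Phi(G)$.

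Next I would argue by contradiction. Suppose $t$ is not a test element of $G$. By Theorem~\ref{test-retract} there is a proper retract $R$ of $G$ with $t\in R$. Since $G$ is torsion free, $R$ has infinite index and is therefore free pro-$2$, and by Lemma~\ref{retract-Demushkin} we have $s:=d(R)\leq\lfloor n/2\rfloor$; as $t\neq 1$ we also have $s\geq 1$. Fixing a minimal generating set $\{u_1,\dots,u_s\}$ of $R$, I would set $H=\langle A,R\rangle=\langle x_1^2,\dots,x_n^2,u_1,\dots,u_s\rangle$ and aim to prove that $H$ is free pro-$2$. Granting this, $R$ is a retract of $H$ by Lemma~\ref{retract-intersection}$(b)$, hence a free factor of $H$ by Corollary~\ref{test-free}; applying Proposition~\ref{freefactors-intersection} to the pairs $R\leq_{\textrm{ff}}H$ and $A\leq_{\textrm{ff}}A$ yields $A\cap R\leq_{\textrm{ff}}A$. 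If $A\cap R=A$, then all $x_i^2$ lie in $R$, and Lemma~\ref{extraction-roots} applied to a retraction $r\colon G\to R$ forces $x_i=r(x_i)\in R$ for every $i$, whence $R=G$, a contradiction. Thus $A\cap R$ is a \emph{proper} free factor of $A$ containing $t$, contradicting that $t$ is a test element of $A$ (Corollary~\ref{test-free}), and the proof would be complete.

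The hard part will be exactly the freeness of $H$, and this is where the hypotheses $p=2$, $n\leq 4$ bite: the crude comparison of $d(H)$ with $2+[G:H](n-2)$ inside $G$ used in Theorem~\ref{Demushkin-test} is too weak here, which is precisely why the case $k=n$ had to be excluded there. My remedy is to localize the Euler--Poincar\'e estimate inside the open subgroup $G_1:=R\Phi(G)$. Since every generator $x_i^2$ of $A$ lies in $\Phi(G)$, we get $H\Phi(G)=R\Phi(G)=G_1$, so $H\leq G_1$ and $[G:G_1]=2^{n-s}$. If $H$ were open in $G$ it would be a Demushkin group with $d(H)=2+[G:H](n-2)\geq 2+2^{n-s}(n-2)$, because $[G:H]\geq[G:G_1]=2^{n-s}$. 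On the other hand $d(H)\leq n+s$. Comparing $n+s$ with $2+2^{n-s}(n-2)$ for the finitely many admissible pairs is contradictory:
\[
(n,s)=(3,1):\ 4<6,\qquad (n,s)=(4,1):\ 5<18,\qquad (n,s)=(4,2):\ 6<10 .
\]
Hence $H$ is not open, so it has infinite index and is free pro-$2$, supplying the one missing ingredient. I expect the only delicate points to be bookkeeping: confirming $A\leq\Phi(G)$ and $H\Phi(G)=G_1$, and checking that $t\neq1$ so that $s\geq 1$ (which is what keeps the localized index strictly above the generator count of $H$).
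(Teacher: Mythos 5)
Your proof is correct, and it follows the same global skeleton as the paper's: assume $t$ lies in a proper retract $R$ (Theorem~\ref{test-retract}, Lemma~\ref{retract-Demushkin}), form $H=\langle x_1^2,\dots,x_n^2,u_1,\dots,u_s\rangle$, show $H$ has infinite index in $G$ and hence is free pro-$2$, and then reach a contradiction by intersecting $R$ with $A=\langle x_1^2,\dots,x_n^2\rangle$ exactly as you do, via Lemma~\ref{retract-intersection}$(b)$, Corollary~\ref{test-free}, Proposition~\ref{freefactors-intersection}, Lemma~\ref{extraction-roots}, Lemma~\ref{free-Demushkin} and Proposition~\ref{test_powers}. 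Where you genuinely diverge is the crux of this theorem, namely showing that $H$ is not open. The paper does a case split: for $[G:H]>2$ it plays the Euler--Poincar\'e formula $d(H)=2+[G:H](n-2)$ against $d(H)\leq n+s$, and for $[G:H]=2$ it observes that the image of $H$ in $G/\Phi(G)$ has dimension at most $\lfloor n/2\rfloor$, whereas a maximal subgroup has image of dimension $n-1$. You instead push the Frattini-quotient observation further: $H\leq R\Phi(G)$ forces $[G:H]\geq 2^{n-s}\geq 4$, after which a single application of the Euler--Poincar\'e formula disposes of all open cases at once. Both arguments combine the same two ingredients (a dimension count in $G/\Phi(G)$ and multiplicativity of the Euler--Poincar\'e characteristic), but your packaging is more uniform and yields a stronger index bound, which makes transparent why the excluded case $p=2$, $k=n$, $n\leq 4$ of Theorem~\ref{Demushkin-test} is recoverable; the paper's version is instead a minimal patch on that proof. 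One small imprecision: $[G:R\Phi(G)]$ equals $2^{n-d}$ where $d$ is the dimension of the image of $R$ in $G/\Phi(G)$, which is at most $s$ but need not equal $s$ (some $u_i$ could lie in $\Phi(G)$ or have dependent images), so you should assert $[G:R\Phi(G)]\geq 2^{n-s}$ rather than equality; this is harmless, since only the lower bound enters your estimates.
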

\begin{proof}
 Suppose that  $t = w(x_1^{\alpha_1}, x_2^{\alpha_2}, ...,  x_n^{\alpha_n})$ is not a test element of $G$. Then by Theorem \ref{test-retract}, $t$ belongs to a proper retract $R$ of $G$. By Lemma \ref{retract-Demushkin}, $s = d(R) \leq \left \lfloor{\frac{n}{2}}\right \rfloor $. Let $\{ u_1, u_2, ..., u_s \}$ be a minimal generating set of $R$ and consider the group $H = \langle x_1^p,  x_2^p, ..., x_n^p, u_1, ..., u_s \rangle$.  Note that $H \neq G$, since $x_i^p \in \Phi(G)$ for all $1 \leq i \leq  n$ and $s  < n$. If $H$ is a proper open subgroup of $G$ with $[G:H] > 2$, then
\begin{displaymath}
d(H) = 2 + [G:H](d(G) - 2)  > 2 + 2(n-2) \geq n + \left \lfloor{\frac{n}{2}}\right \rfloor  \geq n+s \geq d(H),
\end{displaymath}
which is a contradiction. Now assume that $[G:H] =2$, i.e., $H$ is a maximal subgroup of $G$. Note that the minimal generating set of the image of $H$  in  $G/{\Phi(G)}$ is of cardinality at most $\left \lfloor{\frac{n}{2}}\right \rfloor$. However,  the minimal generating set of the image in $G/{\Phi(G)}$ of any maximal subgroup  of $G$ is of cardinality $n-1$. Thus $H$ can not be a maximal subgroup of $G$. Hence, $H$ has infinite index in $G$, and thus it is a free pro-$2$ group.  Moreover, by Lemma \ref{retract-intersection}, $R$ is a retract of $H$. Let $A = \langle  x_1^p,  x_2^p, ..., x_n^p \rangle$. Then arguing as in the proof of Theorem \ref{Demushkin-test}, on one side we get that $R \cap A$ is a proper retract of $A$ that contains $t$ and on the other side we get that $t$ is a test element of $A$, which is a contradiction by Theorem \ref{test-retract}.   Hence, $t$ is a test element of $G$. 
\end{proof}

\section{Test elements of discrete groups}

Let $G$ be a discrete group and $p$ be a prime. We denote by $\jmath_{p}:G \to \widehat{G}_{p}$ the pro-$p$ completion of $G$. 
If $G$ is residually finite-$p$, then $\jmath_p$ is an embedding and we identify $\jmath_{p}(G)$ with $G$. 
\begin{pro}
\label{pro-p - discrete}
Let $p$ be a prime, and let $G$ be a finitely generated residually-$p$ Turner group. 
If $g \in G$ is a test element of $\widehat{G}_p$, then $g$ is a test element of $G$. 
\end{pro}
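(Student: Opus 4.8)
The plan is to reduce the statement to a question about retracts using the hypothesis that $G$ is a Turner group, and then transport a hypothetical retraction up to the pro-$p$ completion via functoriality. Since $G$ is a Turner group, to prove that $g$ is a test element of $G$ it suffices, by the Retract Theorem, to show that $g$ lies in no proper retract of $G$. So I would argue by contradiction: assume that $g \in H$ for some proper retract $H \lneq G$, with retraction $r:G \to H$, and aim to contradict the assumption that $g$ is a test element of $\widehat{G}_p$.

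First I would repackage the retraction as an idempotent endomorphism of $G$. Composing $r$ with the inclusion $\iota:H \hookrightarrow G$ yields $\rho=\iota\circ r:G\to G$; since $r$ restricts to the identity on $H$, the map $\rho$ is idempotent ($\rho^2=\rho$), its image is $H\neq G$, and $\rho(g)=g$ because $g\in H$. Next I would lift $\rho$ to the completion: by the universal property of the pro-$p$ completion, $\rho$ induces a continuous endomorphism $\widehat{\rho}:\widehat{G}_p\to\widehat{G}_p$ satisfying $\widehat{\rho}\circ\jmath_p=\jmath_p\circ\rho$, and functoriality of the completion gives $\widehat{\rho}^2=\widehat{\rho}$. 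Thus $\widehat{\rho}$ is again idempotent and fixes the image of $g$. Here I use that $\jmath_p$ is injective, which holds precisely because $G$ is residually finite-$p$, so that $\widehat{\rho}$ restricted to $G$ recovers $\rho$.

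The crux is then immediate. Since $g$ is a test element of $\widehat{G}_p$ and $\widehat{\rho}(g)=g$, the endomorphism $\widehat{\rho}$ must be an automorphism of $\widehat{G}_p$. But an idempotent automorphism is the identity: precomposing $\widehat{\rho}^2=\widehat{\rho}$ with $\widehat{\rho}^{-1}$ gives $\widehat{\rho}=\mathrm{id}_{\widehat{G}_p}$. Restricting back to $G$ and using the injectivity of $\jmath_p$ forces $\rho=\mathrm{id}_G$, which contradicts $\rho(G)=H\lneq G$. This completes the contradiction and hence the proof.

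I expect the step that genuinely carries the argument to be the observation that $\rho$, and therefore $\widehat{\rho}$, is \emph{idempotent}: this is exactly what converts ``$\widehat{\rho}$ is an automorphism'' into a usable contradiction. Without idempotence the argument collapses, since a non-surjective endomorphism of a discrete group can perfectly well extend to an automorphism of its pro-$p$ completion; it is only the retraction structure that rules this out. Everything else—the Turner reduction to retracts, the functoriality of the completion, and the injectivity of $\jmath_p$—is routine and can be cited directly from the setup.
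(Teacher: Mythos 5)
Your proof is correct, and it finishes by a genuinely different mechanism than the paper's. Both arguments begin identically: since $G$ is a Turner group, one assumes for contradiction that $g$ lies in a proper retract $H$ of $G$ with retraction $r\colon G\to H$, and transports this datum to the pro-$p$ completion. The paper then works with the two induced maps $\imath_p\colon \widehat{H}_p\to\widehat{G}_p$ and $r_p\colon \widehat{G}_p\to\widehat{H}_p$, shows that $\imath_p\circ r_p$ is a retraction of $\widehat{G}_p$ onto the closure $\overline{H}$ of $H$, argues that $\overline{H}$ is \emph{proper} (because $r$, hence $r_p$, is non-injective), and concludes by applying Theorem~\ref{test-retract} to the proper retract $\overline{H}$ containing $g$. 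You instead compress the retraction into a single idempotent endomorphism $\rho=\iota\circ r$ of $G$, lift it to an idempotent endomorphism $\widehat{\rho}$ of $\widehat{G}_p$ fixing $g$, and then need nothing beyond the definition of a test element: $\widehat{\rho}$ must be an automorphism, an idempotent automorphism is the identity, and injectivity of $\jmath_p$ pulls this back to $\rho=\mathrm{id}_G$, contradicting $\rho(G)=H\lneq G$. Your route is more economical: it bypasses Theorem~\ref{test-retract} entirely (the paper in fact only needs its trivial direction), avoids the cited fact that $\imath_p$ embeds $\widehat{H}_p$ onto $\overline{H}$ (\cite[Theorem~3.2.4]{Ribes1}), and never has to address whether $\overline{H}$ is proper in $\widehat{G}_p$. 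What the paper's version buys is an explicit, reusable piece of information---the closure of a proper retract of $G$ is a proper retract of $\widehat{G}_p$---which fits the retract-theoretic theme of the whole paper; your version is shorter and, as you correctly single out, hinges precisely on idempotence, without which the lifted endomorphism could perfectly well be an automorphism.
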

\begin{proof}
Let $g\in G$ be a test element of $\widehat{G}_p$, and assume to the contrary that $g$ is not a test element of $G$.
Since $G$ is a Turner group, $g$ is contained in a proper retract $H$ of $G$ (see the Introduction for the definition of a Turner group). Let $r:G \to H$ be a retraction, and consider the following diagram of pro-$p$ groups and continuous homomorphisms:
\[\xymatrix{ \widehat{H}_p \ar[r]^{\imath_p} \ar[dr]_{id} & \widehat{G}_p \ar[d]^{r_p} \\
 & \widehat{H}_p \ar[r]^-{\imath_p} & \overline{H} \subseteq \widehat{G}_p  }\] 
Here $\imath:H \to G$ is the inclusion homomorphism. Clearly, $\imath_p$ is an injection and $\imath_p(\widehat{H}_p)=\overline{H}$ (see \cite[Theorem~3.2.4]{Ribes1} ).
If $\imath_p(x) \in \overline{H}$, then $(\imath_p \circ r_p)(\imath_p(x))=\imath_p(x)$. Hence, $\imath_p \circ r_p:\widehat{G}_p \to \overline{H}$ is a retraction.
Since $H$ is a proper retract of $G$, $r$ is not injective. Consequently, $r_p$ is not injective and 
$\overline{H}$ is a proper retract of $\widehat{G}_p$. As $g \in \overline{H}$,
it follows from Theorem~\ref{test-retract} that $g$  is not a test element of $\widehat{G}_p$, a contradiction. 
\end{proof}

\begin{cor}
\label{pro-p - free}
Let $D$ be a free discrete group of finite rank and $p$ be a prime. If $w \in D$ is a test element of $\widehat{D}_p$ (a free pro-$p$ group of the same rank as $D$), then $w$ is a test element of $D$.
\end{cor}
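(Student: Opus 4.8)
The plan is to obtain this statement as a direct application of Proposition~\ref{pro-p - discrete}, so the entire task reduces to verifying that a free discrete group $D$ of finite rank satisfies the three hypotheses of that proposition, together with the parenthetical claim that $\widehat{D}_p$ is the free pro-$p$ group of the same rank. First I would note that $D$ is finitely generated, being of finite rank by assumption. Next, $D$ is residually finite-$p$ for every prime $p$: this is the classical fact (going back to Magnus and Iwasawa) that free groups are residually finite-$p$, so the canonical map $\jmath_p:D \to \widehat{D}_p$ is an embedding and we may identify $D$ with its image in $\widehat{D}_p$. Finally, $D$ is a Turner group, since free groups of finite rank were Turner's original examples establishing the Retract Theorem (see \cite{Turner} and the discussion in the Introduction).

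It then remains to justify the parenthetical assertion that $\widehat{D}_p$ is a free pro-$p$ group of the same rank as $D$. This is a standard property of pro-$p$ completions: if $D$ is free on $\{x_1, \ldots, x_n\}$, then any continuous map of the images $\jmath_p(x_1), \ldots, \jmath_p(x_n)$ into a pro-$p$ group extends uniquely to a continuous homomorphism out of $\widehat{D}_p$, so $\widehat{D}_p$ satisfies the universal property of the free pro-$p$ group on $n$ generators. This identification is what makes the hypothesis on $w$ (phrased in terms of the free pro-$p$ group of the same rank) coincide with the hypothesis of Proposition~\ref{pro-p - discrete} (phrased in terms of $\widehat{D}_p$).

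With all hypotheses checked, Proposition~\ref{pro-p - discrete} applies verbatim and yields the conclusion: any $w \in D$ that is a test element of $\widehat{D}_p$ is a test element of $D$. There is no genuine obstacle here, as the result is an immediate specialization of Proposition~\ref{pro-p - discrete}; the only point deserving a word of care is the identification of $\widehat{D}_p$ with the free pro-$p$ group, which ensures the two notions of ``test element of the completion'' agree.
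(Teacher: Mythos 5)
Your proposal is correct and follows exactly the route the paper intends: the paper states this corollary without a written proof precisely because it is an immediate specialization of Proposition~\ref{pro-p - discrete}, using the same three standard facts you verify (free groups of finite rank are finitely generated, residually finite-$p$, and Turner groups by \cite{Turner}) together with the identification of $\widehat{D}_p$ as the free pro-$p$ group of the same rank. Your extra care in checking the universal property of the pro-$p$ completion is a reasonable addition but does not constitute a different approach.
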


We can apply Corollary~\ref{pro-p - free} to the examples of test elements  of free pro-$p$ groups (represented by words of finite length) described in Section~\ref{test-free pro-p}, and thus obtain many new examples of test elements of free discrete groups; 
in the following proposition we give only a sample. (To the best of our knowledge the test elements described in parts $(b)$ and $(c)$ are new.)

\begin{pro}
\label{examples - discrete}
Let $D$ be a free discrete group with basis $\{x_1, \ldots, x_n\}$, $n\geq 2$. Then the following holds:
\begin{enumerate}[(a)]
\item Every higher commutator $t_1=[x_1, ... , x_n]$ is a test element of $D$.
\item Let $n=m+2k$ for some $m, k \geq 0$. Then 
\[t_2=x_1^{s_1} \ldots x_m^{s_m}[x_{m+1}^{r_1}, x_{m+2}^{r_2}] \ldots [x_{m+2k-1}^{r_{2k-1}}, x_{m+2k}^{r_{2k}}]\]
is a test element of $D$ if and only if  $\gcd(s_1, \ldots, s_m) \neq 1$, $ s_i \neq 0$ for all $1 \leq i \leq m$, and $r_j \neq 0$ for all $1 \leq j \leq 2k$ . Moreover, if for some prime $p$, $s_i \in p\mathbb{Z} \setminus p^2\mathbb{Z}$ for all $1 \leq i \leq m$ and $r_i=1$ for $1 \leq i \leq 2k$, then
for every $w \in (D^p[D,D])^p[D^p[D,D], D^p[D,D]]$, $t_2w$ is a test element of $D$.
\item If $n$ is even, then
\[t_3=x_1^{s_1} \ldots x_{n}^{s_n}[x_1,x_2] \ldots [x_{n-1}, x_{n}]\]
is a test element of $D$ if and only if $\gcd(s_1, \ldots, s_n) \neq 1$.
Moreover, if $t_3$ is a test element of $D$, then
for every $w \in (D^p[D,D])^p[D^p[D,D], D^p[D,D]]$, $t_3w$ is also a test element of $D$.
\end{enumerate}
\end{pro}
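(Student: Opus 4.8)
The unifying device is Corollary~\ref{pro-p - free}. A free discrete group $D$ of rank $n$ is a residually finite-$p$ Turner group whose pro-$p$ completion $\widehat{D}_p$ is the free pro-$p$ group $F=F(x_1,\ldots,x_n)$; hence any word of $D$ that represents a test element of $F$ for \emph{some} prime $p$ is automatically a test element of $D$. Consequently every ``if'' direction reduces to producing, for a well-chosen prime, a test or almost primitive element of a free pro-$p$ group, where the results of the previous sections are available. The ``only if'' directions run the other way and must be settled inside $D$, by exhibiting a non-surjective endomorphism fixing the element or a proper retract containing it.

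For sufficiency I would argue as follows. Part $(a)$ is immediate: for any prime $p$, Proposition~\ref{higher commutator} shows $[x_1,\ldots,x_n]$ is a test element of $F$, so Corollary~\ref{pro-p - free} applies. For the forward direction of $(b)$, assuming $\gcd(s_1,\ldots,s_m)\neq 1$, $s_i\neq 0$ and $r_j\neq 0$, I would fix a prime $p\mid\gcd(s_1,\ldots,s_m)$, so that $s_i\in p\mathbb{Z}_p\setminus\{0\}$ and $r_j\in\mathbb{Z}_p\setminus\{0\}$; then $t_2$ is a test element of $F$ by Proposition~\ref{almost_primitive - examples} $(d)$. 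For $(c)$ I would fix $p\mid\gcd(s_1,\ldots,s_n)$, so that every $s_i\in p\mathbb{Z}_p$, and invoke Remark~\ref{almost primitive remark} to see that $t_3$ is almost primitive, hence a test element of $F$ by Proposition~\ref{almost primitive is test}. The two ``moreover'' clauses follow the same pattern: with the prime $p$ fixed, $t_2$ (resp.\ $t_3$) is \emph{almost primitive} in $F$ by Proposition~\ref{almost_primitive - examples} $(c)$ (resp.\ Remark~\ref{almost primitive remark}); any $w\in(D^p[D,D])^p[D^p[D,D],D^p[D,D]]$ maps into $\Phi(F)^p[\Phi(F),\Phi(F)]=\Phi^2(F)$ because $D^p[D,D]$ maps into $\Phi(F)$; and since $\Phi(F)\subseteq M$ for every maximal subgroup $M$ of $F$, one gets $\Phi^2(F)\subseteq\Phi(M)$, whence $\Phi^2(F)\subseteq\bigcap_M\Phi(M)$. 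Proposition~\ref{almost primitive - modify} $(a)$ then shows $t_2 w$ (resp.\ $t_3 w$) is again almost primitive in $F$, hence a test element of $F$, hence of $D$.

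For necessity I would work directly in $D$. If some $s_i=0$ in $(b)$, or some $r_j=0$ (which trivialises an entire commutator and thereby erases both of its generators), then a generator of $D$ does not occur in $t_2$; sending that generator to $1$ and fixing the rest is a non-surjective endomorphism fixing $t_2$, so $t_2$ is not a test element. The remaining case, common to $(b)$ and $(c)$, is $\gcd=1$; here I would build a cyclic retract. Writing $t$ for $t_2$ or $t_3$, its image in the abelianisation is $(s_1,\ldots)$, so by B\'ezout there are integers $c_i$ with $\sum s_i c_i=1$; the homomorphism $r\colon D\to\langle t\rangle\cong\mathbb{Z}$ defined by $r(x_i)=t^{c_i}$ (and $r=1$ on the generators occurring only inside commutators) satisfies $r(t)=t^{\sum s_i c_i}=t$ and has image $\langle t^{\gcd(c_i)}\rangle=\langle t\rangle$, since $\gcd(c_i)\mid 1$. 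Thus $\langle t\rangle$ is a proper retract containing $t$, and because $D$ is a Turner group, $t$ is not a test element.

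The main obstacle is the sufficiency in $(c)$. Unlike $(b)$, the power part and the commutator part of $t_3$ involve the \emph{same} generators, so $F$ cannot be decomposed as a free product separating them and Proposition~\ref{almost primitive - free factor} does not apply on the nose. Establishing that $t_3$ is almost primitive therefore rests on the harder almost primitivity of $x^{\alpha}y^{\beta}[x,y]$ for $\alpha,\beta\in p\mathbb{Z}_p$ recorded in Remark~\ref{almost primitive remark} and proved in \cite{SnoTan}; one must also deal with the degenerate blocks where some $s_i=0$, but these collapse to the plain commutator $[x,y]$, which is almost primitive by Proposition~\ref{almost_primitive - examples} $(b)$.
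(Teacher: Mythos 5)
Your proposal is correct and follows essentially the same route as the paper's proof: sufficiency is reduced via Corollary~\ref{pro-p - free} to Proposition~\ref{higher commutator}, Proposition~\ref{almost_primitive - examples} $(c)$,$(d)$ and Remark~\ref{almost primitive remark}, the ``moreover'' clauses use Proposition~\ref{almost primitive - modify} $(a)$ together with the containment $(D^p[D,D])^p[D^p[D,D],D^p[D,D]]\subseteq\Phi^2(\widehat{D}_p)\subseteq\bigcap_{M}\Phi(M)$, and necessity is settled by the same explicit B\'ezout retraction onto $\langle t\rangle$ and the coordinate-deleting retracts. The only differences are cosmetic: you spell out steps the paper merely asserts (the $\Phi^2$ containment, the degenerate blocks in $(c)$), and you invoke the Turner property for the necessity direction where the trivial observation that an element of a proper retract is never a test element already suffices.
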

\begin{proof}
$(a)$ By Proposition~\ref{higher commutator}, $t_1$ is a test element of $\widehat{D}_p$ for every prime $p$. Hence, by Corollary~\ref{pro-p - free}, $t_1$ is a test element of $D$.
\\
$(b)$ If  $\gcd(s_1, \ldots, s_m)=1$ (so $m> 0$), then there exist integers $l_1, \ldots, l_m$ such that
$s_1l_1 + \ldots + s_ml_m=1$. Then the homomorphism $r:D \to \langle t \rangle$ defined by $r(x_i)=t_2^{l_i}$ for $1 \leq i \leq m$ and $r(x_i)=1$ for $m < i \leq n$ is clearly a retraction. Hence, $t_2$ is not a test element of $D$.
Furthermore, if  $s_i = 0$ for some $1\leq i \leq m$, then $t_2 \in\langle x_1, \ldots, x_{i-1}, x_{i+1}, \ldots, x_n \rangle$, which is a proper retract of $D$. Similarly, if $r_j=0$ for some $1 \leq j \leq 2k$, then
$t_2 \in\langle x_1, \ldots, x_{m+j-1}, x_{m+j+1}, \ldots, x_n \rangle$, which is also a proper retract of $D$.

Now assume that $\gcd(s_1, \ldots, s_m) \neq 1, s_i \neq 0$ for all $1 \leq i \leq m$, and $r_j \neq 0$ for all $1 \leq j \leq 2k$. Let $p$ be any prime that divides $\gcd(s_1, \ldots, s_m)$.
By Proposition~\ref{almost_primitive - examples} $(d)$, $t_2$ is a test element of $\widehat{D}_p$. Hence, by Corollary~\ref{pro-p - free}, $t_2$ is a test element of $D$.
Moreover, if $s_i \in p\mathbb{Z} \setminus p^2\mathbb{Z}$ for $1 \leq i \leq m$ and $r_i=1$ for $1 \leq i \leq 2k$, then by Proposition~\ref{almost_primitive - examples} $(c)$, $t_2$ is an almost primitive element of $\widehat{D}_p$.
Observe that 
\[(D^p[D,D])^p[D^p[D,D], D^p[D,D]] \subseteq \Phi^2(\widehat{D}_p) \subseteq \bigcap_{M}\Phi(M)\]
where the intersection is over all maximal subgroups of $\widehat{D}_p$. Hence,
it follows from Proposition~\ref{almost primitive - modify} $(a)$ and Corollary~\ref{pro-p - free} that $t_2w$ is a test element of $D$ for every $w \in (D^p[D,D])^p[D^p[D,D], D^p[D,D]]$.
\\
$(c)$ The proof is the same as for part $(b)$. Here we use Remark~\ref{almost primitive remark}. 
\end{proof}

\begin{pro}
Let $D$ be a free discrete group with basis $\{x_1, \ldots, x_n\}$, and let $s_i \in \mathbb{Z} \setminus \{0\}$ for $1\leq i \leq n$. If $w \in D$ is a test element of $\widehat{D}_p$ for some prime $p$, then $w(x_1^{s_1}, \ldots, x_n^{s_n})$ is a test element of $D$.
\end{pro}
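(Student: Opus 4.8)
The plan is to derive this as a direct combination of Proposition~\ref{test_powers} and Corollary~\ref{pro-p - free}. Since $D$ is a free discrete group of finite rank $n$, it is residually finite-$p$, and its pro-$p$ completion $\widehat{D}_p$ is a free pro-$p$ group of rank $n$; identifying its basis with the images of $x_1, \ldots, x_n$, we regard $\widehat{D}_p$ as $F(x_1, \ldots, x_n)$. By hypothesis $w$, viewed inside $\widehat{D}_p$, is a test element of this free pro-$p$ group.

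Next I would note that each exponent $s_i$ is a nonzero integer, hence lies in $\mathbb{Z}_p \setminus \{0\}$. The hypotheses of Proposition~\ref{test_powers} are therefore satisfied, and that proposition gives at once that $w(x_1^{s_1}, \ldots, x_n^{s_n})$ is a test element of $\widehat{D}_p$.

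The only point that deserves care is that the substitution is compatible with the completion map $\jmath_p: D \to \widehat{D}_p$. Since $\jmath_p$ is a homomorphism that sends the basis $\{x_1, \ldots, x_n\}$ of $D$ to the basis $\{x_1, \ldots, x_n\}$ of $\widehat{D}_p$ and the exponents $s_i$ are integers, the finite-length word $w(x_1^{s_1}, \ldots, x_n^{s_n})$ formed in $D$ maps under $\jmath_p$ to the element $w(x_1^{s_1}, \ldots, x_n^{s_n})$ formed in $\widehat{D}_p$. Thus the element of $D$ under consideration is precisely the test element of $\widehat{D}_p$ produced in the previous step.

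Finally, because $w(x_1^{s_1}, \ldots, x_n^{s_n}) \in D$ is a test element of $\widehat{D}_p$, Corollary~\ref{pro-p - free} immediately yields that it is a test element of $D$. There is no substantial obstacle here: the statement is a clean synthesis of the two cited results, and the only thing to guard against is conflating the two readings of the substitution notation, which the identification of the bases of $D$ and $\widehat{D}_p$ resolves.
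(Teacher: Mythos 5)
Your proposal is correct and is exactly the paper's proof: the paper's own argument reads, in full, ``Apply Proposition~\ref{test_powers} and Corollary~\ref{pro-p - free}.'' You have simply spelled out the same two-step synthesis, including the (routine but worth noting) compatibility of the substitution with the completion map $\jmath_p$.
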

\begin{proof}
Apply Proposition~\ref{test_powers} and Corollary~\ref{pro-p - free}. 
\end{proof}
\smallskip

Recall that the \emph{profinite topology} on a group $G$ has the following basis: 
\[\{gN \mid g \in G, N \unlhd G \text{ and } |G:N| < \infty\}.\] 

\begin{thm}
\label{dense - discrete}
Let $D$ be a free discrete group with finite basis $\{x_1, \ldots, x_n\}$. Then the set of test elements of $D$ is a dense subset of $D$ in the profinite topology. 
\end{thm}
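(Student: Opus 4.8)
The plan is to reduce density in the profinite topology to a single explicitly modifiable family of test elements supplied by the pro-$p$ theory, and then to defeat the finite quotient by choosing the prime cleverly. If $n=1$ then $D\cong\mathbb{Z}$, every nontrivial element is a test element, and density is clear, so assume $n\geq 2$. Fix $g\in D$ and a finite-index normal subgroup $N\unlhd D$, write $Q=D/N$, and let $\pi:D\to Q$ be the quotient map with $\bar g=\pi(g)$. It suffices to produce a test element $t$ of $D$ inside the basic open set $gN$, that is, with $\pi(t)=\bar g$.

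First I would fix a finite-word almost primitive element of $\widehat D_p$: for $n$ even take $a=[x_1,x_2][x_3,x_4]\cdots[x_{n-1},x_n]$, and for $n$ odd take $a=x_1^{p}[x_2,x_3]\cdots[x_{n-1},x_n]$. By Proposition~\ref{almost_primitive - examples}~$(c)$, $a$ is an almost primitive element of the free pro-$p$ group $\widehat D_p=F(x_1,\dots,x_n)$. Set $E=D^{p}[D,D]$ and consider the discrete subgroup $E^{p}[E,E]\leq D$; as observed in the proof of Proposition~\ref{examples - discrete}, it is contained in $\Phi^{2}(\widehat D_p)\subseteq\bigcap_{M}\Phi(M)$, the intersection running over all maximal subgroups $M$ of $\widehat D_p$. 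Hence by Proposition~\ref{almost primitive - modify}~$(a)$, $a\,w$ is again an almost primitive element of $\widehat D_p$ for every $w\in E^{p}[E,E]$, and therefore a test element of $\widehat D_p$ by Proposition~\ref{almost primitive is test}. Since $a\,w$ is a finite word it lies in $D$, and Corollary~\ref{pro-p - free} upgrades it to a test element of the discrete group $D$. Thus $\{a\,w\mid w\in E^{p}[E,E]\}$ is a reservoir of test elements of $D$, and it remains only to realise the coset $gN$ inside this reservoir.

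The decisive step is to match the finite quotient, and here I would choose the prime $p$ so that $p\nmid|Q|=[D:N]$. With this choice $x\mapsto x^{p}$ is a bijection of $Q$, so $Q^{p}=Q$ and consequently $\pi(E)=\pi(D^{p}[D,D])=Q^{p}[Q,Q]=Q$; iterating, $\pi(E^{p}[E,E])=\pi(E)^{p}[\pi(E),\pi(E)]=Q$. Therefore $\pi(a)^{-1}\bar g\in Q$ lies in $\pi(E^{p}[E,E])$, and I may pick $w\in E^{p}[E,E]$ with $\pi(w)=\pi(a)^{-1}\bar g$. Then $\pi(a\,w)=\bar g$, so $t=a\,w\in gN$, and by the previous paragraph $t$ is a test element of $D$. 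As $g$ and $N$ were arbitrary, the test elements are dense.

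I expect the main obstacle to be precisely this last matching. A single prime $p$ only sees the pro-$p$ completion, whose test elements are confined to $\Phi(\widehat D_p)$, so one cannot hope to hit an arbitrary coset of an arbitrary finite quotient using pro-$p$ data for a prime fixed in advance; this is why naive attempts (e.g.\ matching only the abelianization) run into congruence restrictions. The resolution is that the prime is chosen \emph{after} $N$, coprime to $[D:N]$, which forces the correction subgroup $E^{p}[E,E]$ to surject onto $Q$ and thereby removes any congruence constraint on $\bar g$. The remaining points—the two parity cases for $a$ and the containment $E^{p}[E,E]\subseteq\bigcap_M\Phi(M)$—are routine: the former is immediate from Proposition~\ref{almost_primitive - examples}, and the latter follows from the elementary inclusion $\Phi^{2}(F)\subseteq\Phi(M)$ valid for every maximal subgroup $M$ of a free pro-$p$ group $F$.
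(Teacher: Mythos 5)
Your proposal is correct, and it reaches the theorem by a genuinely different mechanism than the paper, even though both share the same outer skeleton: pick a prime $p$ coprime to $m=[D:N]$, manufacture a test element of $\widehat{D}_p$ lying in the prescribed coset, and descend to $D$ via Corollary~\ref{pro-p - free}. The paper's proof does the coset-matching by moving the target word itself: it replaces $w$ by $w'=wx_1^{r_1m}\cdots x_n^{r_nm}$ (same coset, since $x_i^m\in N$) so that all exponent sums become divisible by $p$, forcing $w'\in\Phi(\widehat{D}_p)$; it then invokes Lemma~\ref{a lot of test elements}~$(b)$ --- whose proof uses Zorn's lemma and algebraic closures --- to get a sub-basis $\{x_{i_1},\ldots,x_{i_k}\}$ such that appending any test element of $H=\langle x_{i_1},\ldots,x_{i_k}\rangle$ lying in $\Phi(H)$ produces a test element of $\widehat{D}_p$, and it takes the correction $u=x_{i_1}^{pm}\cdots x_{i_k}^{pm}\in N$, a test element of $H$ by Proposition~\ref{almost_primitive - examples}~$(d)$. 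You instead keep a fixed explicit almost primitive element $a$ and do all the matching in the correction factor: $aw$ stays almost primitive, hence a test element, for every $w\in E^p[E,E]\subseteq\Phi^2(\widehat{D}_p)\subseteq\bigcap_M\Phi(M)$ by Proposition~\ref{almost primitive - modify}~$(a)$, and coprimality of $p$ with $|Q|$ makes $E^p[E,E]$ surject onto $Q=D/N$ (every element of a finite group of order prime to $p$ is a $p$-th power, and images of verbal subgroups are the corresponding verbal subgroups of the image), so some $aw$ lands in $gN$. Your route avoids Lemma~\ref{a lot of test elements} entirely and produces an explicit dense family $a\cdot E^p[E,E]$ of test elements; its price is the verification of the surjection $\pi(E^p[E,E])=Q$, which you carry out correctly, whereas the paper's correction $u$ lies in $N$ by construction and needs no computation in the quotient. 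One presentational remark: for odd $n$ your element $a=x_1^p[x_2,x_3]\cdots[x_{n-1},x_n]$ depends on $p$, which you choose only after $N$ is given; the argument is sound provided one reads it in the order ``given $N$, choose $p\nmid m$, then build $a$ and the reservoir,'' since the reservoir construction works uniformly in $p$.
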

\begin{proof}
Let $X$ be the closure in the profinite topology on $D$ of the set of test elements of $D$.
We need to show that $X=D$. Let $w \in D $ and $N \unlhd D$ with $|D:N| = m < \infty$.
Choose a prime $p$ such that $p \nmid m$.
For each $1 \leq i \leq n$, let $k_i$ be the sum of the exponents of all occurrences of $x_i$ in $w$. Then we can find natural numbers 
$r_i \geq 0$ satisfying $p \mid k_i + r_im$ for all $1\leq i \leq n$; set $w'=wx_1^{r_1m} \ldots x_n^{r_nm}$.
Then $wN=w'N$ and $w' \in \Phi(\widehat{D}_p)$ ($\sigma_{x_i}(w') \in p\mathbb{Z}_p$ for all $1 \leq i \leq n$).
By Lemma~\ref{a lot of test elements} $(b)$, there exists $\{x_{i_1}, \ldots, x_{i_k}\} \subseteq \{x_1, \ldots, x_n\}$ such that
for every test element $u$ of $H=\overline{\langle  x_{i_1}, \ldots, x_{i_k} \rangle} \leq \widehat{D}_p$ contained in $\Phi(H)$, $w'u$ is a test element of $\widehat{D}_p$.
By Proposition~\ref{almost_primitive - examples} $(d)$, $u=x_{i_1}^{pm} \ldots  x_{i_k}^{pm} \in N$ is a test element of $H$ and clearly $u \in \Phi(H)$. Hence, $t=w'u \in wN$ is a test element of $\widehat{D}_p$.
It follows from Corollary~\ref{pro-p - free} that $t$ is a test element of $D$; thus $X=D$.
\end{proof}

One might wonder whether every test element of a free discrete group $D$ is a test element of $\widehat{D}_p$ for some prime $p$. This is certainly the case for free groups of rank one, and we prove in the following proposition that it is also true for free groups of rank two.
However, we don't know if it is true for all free discrete groups of finite rank.  

\begin{pro}
Let $D$ be a free discrete group of rank two, and let $w \in D$. Then $w$ is a test element of $D$ if and only if there exists a  prime $p$ such that $w$ is a test element of $\widehat{D}_p$. 
\end{pro}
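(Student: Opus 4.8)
The plan is to prove the two implications separately. The reverse direction is immediate: if $w$ is a test element of $\widehat{D}_p$ for some prime $p$, then since $D$ is a free discrete group of finite rank---hence a finitely generated residually finite-$p$ Turner group whose pro-$p$ completion is a free pro-$p$ group of the same rank---Corollary~\ref{pro-p - free} applies verbatim and yields that $w$ is a test element of $D$. All the work is in the forward direction, for which I would first record two standard facts about the rank-two free group $D$: the test-element property is invariant under $\mathrm{Aut}(D)$ (if $\varphi$ fixes $\alpha(w)$, then $\alpha^{-1}\varphi\alpha$ fixes $w$), and the natural map $\mathrm{Aut}(D)\to\mathrm{GL}_2(\mathbb{Z})$ is surjective.

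So assume $w$ is a test element of $D$, and write $w=v^m$ where $v\in D$ is not a proper power and $m\geq 1$ (the unique maximal root in a free group). Let $(a_1,a_2)$ be the image of $v$ in the abelianization $D/[D,D]\cong\mathbb{Z}^2$; equivalently $a_i$ is the exponent sum of $x_i$ in $v$. The key claim is that $(a_1,a_2)$ is \emph{not} a primitive vector. Indeed, suppose $\gcd(a_1,a_2)=1$. Then $(a_1,a_2)$ extends to a basis of $\mathbb{Z}^2$, so there is $\alpha\in\mathrm{Aut}(D)$ lifting a matrix of $\mathrm{GL}_2(\mathbb{Z})$ that sends $(1,0)$ to $(a_1,a_2)$; hence $v':=\alpha^{-1}(v)=x_1c$ with $c\in[D,D]$. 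Define $\psi\colon D\to D$ by $\psi(x_1)=v'$ and $\psi(x_2)=1$. Its image $\langle v'\rangle$ is cyclic, so $\psi(c)\in[\langle v'\rangle,\langle v'\rangle]=1$ and therefore $\psi(v')=v'\psi(c)=v'$, whence $\psi\bigl((v')^m\bigr)=(v')^m=\alpha^{-1}(w)$. As $\psi$ is not injective it is not an automorphism, so $\alpha^{-1}(w)$ is not a test element of $D$; by $\mathrm{Aut}$-invariance neither is $w$, a contradiction. This proves the claim.

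Having established that $(a_1,a_2)$ is not primitive, I would pick a prime $p$ dividing both $a_1$ and $a_2$ (any prime works if $(a_1,a_2)=(0,0)$). Then $\sigma_{x_1}(v)=a_1$ and $\sigma_{x_2}(v)=a_2$ lie in $p\mathbb{Z}_p$. Since $v$ is not a proper power in $D$, it is in particular not a $p$-th power in $D$, hence not a $p$-th power in $\widehat{D}_p$ by \cite[Proposition~2]{Baumslag}. Proposition~\ref{rank2} now shows that $v$ is a test element of $\widehat{D}_p$, and Proposition~\ref{test-power} (powers of test elements are test elements in free pro-$p$ groups) gives that $w=v^m$ is a test element of $\widehat{D}_p$, completing the forward implication.

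\textbf{Main obstacle.} The delicate point is the interaction with proper powers. One cannot apply Proposition~\ref{rank2} to $w$ directly, because $w$ may itself be a $p$-th power in $\widehat{D}_p$ (exactly when $p\mid m$), in which case the hypothesis of Proposition~\ref{rank2} fails. Passing to the root $v$, which is never a $p$-th power, circumvents this, and only afterward does one reintroduce the exponent $m$ via Proposition~\ref{test-power}. The second subtle step is the non-primitivity claim, where an explicit non-automorphism fixing $w$ must be produced; the identity $\psi(c)=1$ uses essentially that $D$ has rank two, so that the image $\langle v'\rangle$ is abelian and absorbs the commutator correction $c$.
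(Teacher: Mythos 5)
Your proof is correct and follows essentially the same route as the paper: both directions match, and the forward implication proceeds exactly as in the paper by extracting the non-proper-power root, showing its exponent-sum vector has non-unit gcd, choosing a prime $p$ dividing that gcd, and concluding via Baumslag's theorem, Proposition~\ref{rank2} and Proposition~\ref{test-power}. The only deviation is in the non-primitivity claim, where the paper argues more directly: from $m_1k_1+m_2k_2=1$ it defines $r(x_1)=u^{m_1}$, $r(x_2)=u^{m_2}$, which fixes $u$ because its image $\langle u\rangle$ is abelian, so no appeal to $\mathrm{Aut}$-invariance of test elements or to the surjectivity of $\mathrm{Aut}(D)\to\mathrm{GL}_2(\mathbb{Z})$ is needed.
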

\begin{proof}
Suppose that for some prime $p$, $w$ is a test element of $\widehat{D}_p$.
By Corollary~\ref{pro-p - free}, $w$ is a test element of $D$.
Now suppose that $w$ is a test element of $D$. Then $w=u^n$ for some $u \in D$ that is not a proper power.
Let $\{x_1, x_2\}$ be a basis for $D$. Denote by $k_1$  (resp. $k_2$) the sum of the exponents of all occurrences of $x_1$ (resp. $x_2$) in $u$.
It is easy to see that $d=\gcd(k_1, k_2) \neq 1$. Indeed, otherwise $m_1k_1 + m_2k_2=1$ for some $m_1, m_2 \in \mathbb{Z}$, and the homomorphism
$r:F \to \langle u \rangle$  defined by $r(x_1)=u^{m_1}$ and $r(x_2)=u^{m_2}$ is a retraction.
Let $p$ be a prime divisor of $d$. Then $\sigma_{x_1}(u), \sigma_{x_2}(u) \in p \mathbb{Z}_p$. It follows from Theorem 2 in \cite{Baumslag} that $u$ is not a $p$-th power 
in $\widehat{D}_p$. By Proposition~\ref{rank2}, $u$ is a test element of $\widehat{D}_p$. Moreover, it follows from Proposition~\ref{test-power} that $w$ is also a test element of $\widehat{D}_p$.
\end{proof}

We now turn to test elements in surface groups. For $n \geq 1$, let 
$$G(n)=\langle x_1, \ldots, x_{2n} \mid [x_1, x_2] \ldots [x_{2n-1}, x_{2n}] \rangle,$$
i.e., $G(n)$ is an orientable surface group of genus $n$. O'Neill and Turner \cite{O'Neill} proved that every orientable surface group is a Turner group. Moreover, Baumslag \cite{Baumslag1} showed that orientable surface groups are residually finite-$p$ for every prime $p$. 
Hence, we have the following corollary to Proposition~\ref{pro-p - discrete}.

\begin{cor}
\label{pro-p - surface}
Let $n \geq 1$ be a natural number.
If $w \in G(n)$ is a test element of $\widehat{G(n)}_p$ for some prime $p$, then it is a test element of $G(n)$.
\end{cor}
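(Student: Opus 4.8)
The plan is to realize this statement as a direct instance of Proposition~\ref{pro-p - discrete}, whose hypotheses require the discrete group $G$ to be a finitely generated residually finite-$p$ Turner group. The entire argument therefore reduces to checking these three conditions for $G = G(n)$, after which the proposition may be quoted verbatim; there is no independent content beyond this verification and the two classical results cited in the paragraph preceding the corollary.

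First I would record that $G(n)$ is finitely generated, since its defining presentation exhibits $x_1, \ldots, x_{2n}$ as a generating set. Next, the Turner property is supplied by the theorem of O'Neill and Turner \cite{O'Neill}, which asserts that every orientable surface group satisfies the Retract Theorem. Finally, residual finiteness in the pro-$p$ sense is precisely Baumslag's theorem \cite{Baumslag1}, guaranteeing that $G(n)$ is residually finite-$p$ for every prime $p$; in particular it is residually finite-$p$ for the specific prime $p$ named in the statement. Consequently the canonical map $\jmath_p : G(n) \to \widehat{G(n)}_p$ is an embedding, and $w$ may be regarded simultaneously as an element of $G(n)$ and of its pro-$p$ completion, so that the phrase ``$w$ is a test element of $\widehat{G(n)}_p$'' is meaningful for an element of $G(n)$.

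With all three hypotheses in hand, Proposition~\ref{pro-p - discrete} applies with $G = G(n)$, and the hypothesis that $w$ is a test element of $\widehat{G(n)}_p$ yields immediately that $w$ is a test element of $G(n)$. I do not anticipate any genuine obstacle, as the substantive work lives entirely inside Proposition~\ref{pro-p - discrete} and the two invoked theorems. The only point worth flagging is the degenerate case $n = 1$, where $G(1) \cong \mathbb{Z}^2$ and $\widehat{G(1)}_p \cong \mathbb{Z}_p^2$ possesses no test elements at all; there the hypothesis is vacuously satisfied and the conclusion holds trivially, so it poses no difficulty.
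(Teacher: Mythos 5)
Your proposal is correct and coincides with the paper's own argument: the corollary is derived exactly by verifying that $G(n)$ is a finitely generated, residually finite-$p$ Turner group (via O'Neill--Turner and Baumslag) and then invoking Proposition~\ref{pro-p - discrete}. Your extra remark on the vacuous case $n=1$ is a harmless addition not present in the paper.
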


It follows from \cite[Lemma 2.1]{Lu} and Theorem~\ref{demushkin - classification} $(i)$ that $\widehat{G(n)}_p$ is a Demushkin group for every $n \geq 1$ and every prime $p$.
Therefore, the following two propositions are immediate consequences of Corollary~\ref{pro-p - surface}, Theorem~\ref{Demushkin-test} and Theorem~\ref{Demushkin-test-2}.

\begin{pro}
\label{surface - test}
Let $n \geq 2$ be a natural number, $p$ a prime, and $\{a_1, \ldots, a_{2n}\}$ any generating set of $G(n)$.
Fix a positive integer $k$ such that $n + 1 \leq k  \leq 2n $. Let $s_1, \ldots, s_k$ be non-zero integers, and suppose that at least $ k -  n  +1 $ $s_i$'s are divisible by $p$. Moreover, if $p=2$ and $n = 4$, suppose that $k < 2n$. 
Let $D$ be a free discrete group on $\{y_1, \ldots, y_k\}$.
If $w(y_1, y_2, ..., y_k) \in D$ is a test element of $\widehat{D}_p$, then $t = w(a_1^{s_1}, a_2^{s_2}, ...,  a_k^{s_k})$ is a test element of $G(n)$.
\end{pro}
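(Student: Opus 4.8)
The plan is to obtain this as an immediate corollary of Theorem~\ref{Demushkin-test} and Corollary~\ref{pro-p - surface}, exploiting the fact—recorded just above the proposition—that $\widehat{G(n)}_p$ is a Demushkin group. First I would pin down its rank: the single defining relator of $G(n)$ is a product of commutators and hence lies in the commutator subgroup, so $G(n)^{\textrm{ab}} \cong \mathbb{Z}^{2n}$ and therefore $d(\widehat{G(n)}_p)=2n$. Since $n \geq 2$, this is a Demushkin group of rank $2n > 2$, which is exactly the regime in which Theorem~\ref{Demushkin-test} applies. The strategy is then to verify that $t=w(a_1^{s_1},\ldots,a_k^{s_k})$ is a test element of $\widehat{G(n)}_p$ via that theorem, and finally invoke Corollary~\ref{pro-p - surface} to descend to a test element of the discrete surface group $G(n)$.

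Before applying the theorem I would check the two identifications it presupposes. Because $\{a_1,\ldots,a_{2n}\}$ generates $G(n)$, its image topologically generates $\widehat{G(n)}_p$; as there are exactly $2n=d(\widehat{G(n)}_p)$ of them, this is a minimal generating set, so the phrase ``any generating set'' in the hypothesis is legitimate. I would also note that the free pro-$p$ group $F(y_1,\ldots,y_k)$ occurring in Theorem~\ref{Demushkin-test} is precisely $\widehat{D}_p$, the pro-$p$ completion of the free discrete group of rank $k$; hence the assumption that $w$ is a test element of $\widehat{D}_p$ is verbatim the hypothesis the theorem requires of $w$.

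It then remains to match the numerical conditions, taking $G=\widehat{G(n)}_p$ and $\alpha_i=s_i$ (nonzero integers, so nonzero elements of $\mathbb{Z}_p$). Writing $N=d(G)=2n$ one has $\lfloor N/2 \rfloor = n$, so the range $n+1\leq k\leq 2n$ of the proposition is exactly $\lfloor N/2 \rfloor +1 \leq k \leq N$; the requirement that at least $k-n+1$ of the $s_i$ be divisible by $p$ is exactly the requirement that at least $k-\lfloor N/2 \rfloor +1$ of the $\alpha_i$ lie in $p\mathbb{Z}_p\setminus\{0\}$; and the exceptional clause ``if $p=2$ and $N\leq 4$, suppose $k<N$'' of Theorem~\ref{Demushkin-test}, read through $N=2n$, becomes the small-genus restriction on $k$. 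Theorem~\ref{Demushkin-test} then gives that $t$ is a test element of $\widehat{G(n)}_p$, and Corollary~\ref{pro-p - surface} upgrades this to the conclusion that $t$ is a test element of $G(n)$. Since the argument is essentially a dictionary between the two statements, there is no substantive obstacle; the only step demanding genuine care is the index bookkeeping arising from the clash between the genus parameter $n$ and the Demushkin rank $N=2n$, and in particular the correct transcription of the $p=2$ exceptional clause, which for surface groups is triggered precisely in the low-genus case $2n\leq 4$.
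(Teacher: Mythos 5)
Your route is the same as the paper's own: the paper derives Proposition~\ref{surface - test} as an immediate consequence of Corollary~\ref{pro-p - surface} and Theorem~\ref{Demushkin-test}, and your rank computation $d(\widehat{G(n)}_p)=2n$, your observation that the $a_i$ form a minimal generating set of the completion, and your translation of the main numerical hypotheses (the range of $k$ and the count of exponents divisible by $p$) are all correct.

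The problem is the $p=2$ exceptional clause --- precisely the point you yourself flagged as the delicate one. You correctly compute that the clause of Theorem~\ref{Demushkin-test} (``if $p=2$ and $d(G)\leq 4$, suppose $k<d(G)$''), read through $d(G)=2n$, is triggered exactly when $2n\leq 4$, i.e.\ in genus $n=2$. But the proposition you are proving places its clause at genus $n=4$, and you then assert that the two formulations agree; they do not. Genus $4$ gives $d(G)=8>4$, where the theorem's clause is vacuous, while genus $2$ gives $d(G)=4$, where it bites. Concretely, take $n=2$, $p=2$, $k=4=2n$, with $s_1,s_2,s_3$ even and $s_4$ odd: the hypotheses of the stated proposition are satisfied (at least $k-n+1=3$ of the $s_i$ are divisible by $2$, and the stated clause is silent since $n\neq 4$), yet Theorem~\ref{Demushkin-test} is inapplicable because $k=4$ is not strictly less than $d(G)=4$, and Theorem~\ref{Demushkin-test-2} is inapplicable as well because it requires \emph{all four} exponents to lie in $2\mathbb{Z}_2\setminus\{0\}$. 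So your argument (and indeed the paper's one-line proof) does not establish the proposition as literally stated; the clause ``$p=2$ and $n=4$'' is evidently a misprint for ``$p=2$ and $n=2$'', which is exactly what your own translation produces. The correct move is to prove the corrected statement and flag the misprint, rather than to claim, as you do, that the dictionary between the two statements is exact.
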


\begin{pro}
\label{surface - test - 2}
Let $\{a_1, a_2, a_3, a_4\}$ be any generating set of $G(2)$, and let $s_1, s_2, s_3,  s_4$ be even non-zero integers. Let $D$ be a free discrete group on $\{y_1, y_2, y_3, y_4\}$.
If $w(y_1, y_2, y_3, y_4) \in D$ is a test element of the free pro-$2$ group $\widehat{D}_2$, then $t = w(a_1^{s_1}, a_2^{s_2},a_3^{s_3},  a_4^{s_4})$ is a test element of $G(2)$.
\end{pro}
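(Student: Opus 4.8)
The plan is to recognize $t$ as a test element of the pro-$2$ completion $\widehat{G(2)}_2$ by a direct application of Theorem~\ref{Demushkin-test-2}, and then to descend from the completion to $G(2)$ via Corollary~\ref{pro-p - surface}. The entire content of the proposition is already packaged in these two results, so the task is essentially to check that their hypotheses are met with $n=4$ and $p=2$.

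First I would record the structural facts about $\widehat{G(2)}_2$. As noted just before the proposition (via \cite[Lemma 2.1]{Lu} together with Theorem~\ref{demushkin - classification} $(i)$), the pro-$2$ completion of the genus-two orientable surface group is a Demushkin group. Since the single defining relator $[x_1,x_2][x_3,x_4]$ of $G(2)$ lies in $[F,F] \subseteq \Phi(F)$, passing to the Frattini quotient kills the relation, so $\widehat{G(2)}_2/\Phi(\widehat{G(2)}_2)$ has dimension $4$; hence $d(\widehat{G(2)}_2)=4$. Thus $\widehat{G(2)}_2$ is a Demushkin pro-$2$ group with $3 \leq d = 4 \leq 4$, which is exactly the range covered by Theorem~\ref{Demushkin-test-2}.

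Next I would verify the remaining hypotheses. The images $\bar a_1,\dots,\bar a_4$ of the generating set $\{a_1,\dots,a_4\}$ topologically generate $\widehat{G(2)}_2$; as there are exactly four of them and $d(\widehat{G(2)}_2)=4$, they necessarily form a minimal generating set. Each exponent $s_i$ is an even nonzero integer, so $s_i \in 2\Z \setminus \{0\} \subseteq 2\Z_2 \setminus \{0\}$. Finally, $\widehat{D}_2$ is a free pro-$2$ group of rank four, i.e.\ $F(y_1,y_2,y_3,y_4)$, and $w$ is assumed to be a test element of it. Applying Theorem~\ref{Demushkin-test-2} then yields that the image of $t=w(a_1^{s_1},\dots,a_4^{s_4})$ in $\widehat{G(2)}_2$, namely $w(\bar a_1^{\,s_1},\dots,\bar a_4^{\,s_4})$ (the two agree because the completion map is a homomorphism), is a test element of $\widehat{G(2)}_2$. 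To conclude, I would invoke Corollary~\ref{pro-p - surface} with $p=2$: since $t \in G(2)$ and its image is a test element of $\widehat{G(2)}_2$, it follows that $t$ is a test element of $G(2)$.

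There is no genuine obstacle in this argument; the only point requiring attention is that this is precisely the boundary case $p=2$, $k=n=4$ that is \emph{excluded} from Theorem~\ref{Demushkin-test} (and excluded from Proposition~\ref{surface - test} through the condition $k<2n$), which is the reason the separate Theorem~\ref{Demushkin-test-2}, and hence this separate proposition, is needed. One should also be careful to note that a four-element generating set of the discrete group $G(2)$ automatically maps to a \emph{minimal} generating set of the rank-four completion, so that Theorem~\ref{Demushkin-test-2} applies to an arbitrary generating set as claimed.
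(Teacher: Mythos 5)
Your proposal is correct and is exactly the paper's argument: the authors state that Proposition~\ref{surface - test - 2} is an immediate consequence of Corollary~\ref{pro-p - surface} and Theorem~\ref{Demushkin-test-2}, using precisely the facts you verify (that $\widehat{G(2)}_2$ is a Demushkin group with $d=4$ by \cite[Lemma 2.1]{Lu} and Theorem~\ref{demushkin - classification}, that a four-element generating set maps to a minimal generating set of the completion, and that $s_i \in 2\mathbb{Z}_2 \setminus \{0\}$). Your closing remark correctly identifies why the boundary case $p=2$, $k=n=4$ forces the use of Theorem~\ref{Demushkin-test-2} rather than Theorem~\ref{Demushkin-test}.
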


Non-orientable surface groups of genus $n\geq 3$ are Turner groups (see \cite{O'Neill}). 
Moreover, non-orientable surface groups of genus $n\geq 4$ are residually free, and therefore residually finite-$p$ for every prime $p$ (see \cite{Benjamin-Baumslag} and \cite{Baumslag1}).  
By \cite[Lemma~8.9]{Minasyan}, the non-orientable surface group of genus $3$ is also residually finite-$p$ for all $p$.
Hence, non-orientable surface groups of genus $n\geq 3$ satisfy the hypothesis of Proposition~\ref{pro-p - discrete}.
Now assume that $p \geq 3$, and let $G=\langle x_1, \ldots, x_n \mid x_1^2 \ldots x_n^2 \rangle$ be a non-orientable surface group of genus $n \geq 3$. 
By \cite[Lemma 2.1]{Lu}, $\widehat{G}_p$ has a presentation $\langle x_1, \ldots, x_n \mid x_1^2 \ldots x_n^2 \rangle$ as a pro-$p$ group. 
Observe that $t= x_1^2 \ldots x_n^2$ is a primitive element in the free pro-$p$ group $F(x_1, \ldots, x_n)$.
Hence, $\widehat{G}_p$ is a free pro-$p$ group of rank $n-1$, and we have the following proposition. 

\begin{pro}
\label{non-orientable surface groups}
Let $G=\langle x_1, \ldots, x_n \mid x_1^2 \ldots x_n^2 \rangle$ be a non-orientable surface group of genus $n \geq 3$, and
let $D$ be a free discrete group on $\{y_1, \ldots, y_{n-1}\}$. 
If $w(y_1, y_2, ..., y_{n-1}) \in D$ is a test element of $\widehat{D}_p$ for some $p \geq 3$, then $t = w(x_{i_1}, x_{i_2}, ...,  x_{i_{n-1}})$ is a test element of $G$ for any choice of distinct 
elements $x_{i_j} \in \{x_1, \ldots, x_n\}, 1\leq j\leq n-1$.
\end{pro}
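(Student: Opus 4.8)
The plan is to reduce to the pro-$p$ setting and invoke Proposition~\ref{pro-p - discrete}. As noted in the discussion preceding the statement, $G$ is a finitely generated residually finite-$p$ Turner group, and for $p \geq 3$ its pro-$p$ completion $\widehat{G}_p$ is a free pro-$p$ group of rank $n-1$. Hence, by Proposition~\ref{pro-p - discrete}, it suffices to show that $t = w(x_{i_1}, \ldots, x_{i_{n-1}})$ is a test element of $\widehat{G}_p$. Since the element $t$ computed in $G$ coincides with its image in $\widehat{G}_p$, I would henceforth work entirely inside $\widehat{G}_p$.

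The heart of the argument is to verify that any $n-1$ distinct images $x_{i_1}, \ldots, x_{i_{n-1}}$ constitute a basis of $\widehat{G}_p$. To this end I would inspect the Frattini quotient. Writing $\widehat{G}_p = F(x_1, \ldots, x_n) / \langle\!\langle r \rangle\!\rangle$ with $r = x_1^2 \cdots x_n^2$, the image of $x_i^2$ in $F/\Phi(F) \cong \mathbb{F}_p^n$ is $2\bar{x}_i$; as $p$ is odd, $2$ is a unit in $\mathbb{F}_p$, so $r$ maps to the \emph{nonzero} vector $2(\bar{x}_1 + \cdots + \bar{x}_n)$. Consequently
\[\widehat{G}_p / \Phi(\widehat{G}_p) \cong \mathbb{F}_p^n / \langle \bar{x}_1 + \cdots + \bar{x}_n \rangle \cong \mathbb{F}_p^{\,n-1},\]
and the sole linear relation among the images of $x_1, \ldots, x_n$ is $\bar{x}_1 + \cdots + \bar{x}_n = 0$. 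Omitting any single generator therefore leaves $n-1$ vectors that span this $(n-1)$-dimensional space, so they are linearly independent and form a basis of $\widehat{G}_p / \Phi(\widehat{G}_p)$. Being a minimal generating set of the free pro-$p$ group $\widehat{G}_p$ of rank $n-1$, the elements $x_{i_1}, \ldots, x_{i_{n-1}}$ then form a basis.

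With this in hand, I would produce an isomorphism $\phi: \widehat{D}_p \to \widehat{G}_p$ of free pro-$p$ groups of rank $n-1$ determined by $\phi(y_j) = x_{i_j}$, which carries $w(y_1, \ldots, y_{n-1})$ to $t = w(x_{i_1}, \ldots, x_{i_{n-1}})$. Because the test-element property is manifestly preserved under isomorphisms and $w$ is a test element of $\widehat{D}_p$ by hypothesis, $t$ is a test element of $\widehat{G}_p$; Proposition~\ref{pro-p - discrete} then yields that $t$ is a test element of $G$. I expect the only genuine obstacle to be the basis computation of the second paragraph, and specifically the verification that $r$ survives nontrivially in the Frattini quotient --- this is precisely where the hypothesis $p \geq 3$ enters, since for $p = 2$ the relator would vanish modulo $\Phi(F)$ and $\widehat{G}_2$ would fail to be free of rank $n-1$. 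The remaining steps are formal.
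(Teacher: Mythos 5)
Your proof is correct and follows essentially the same route as the paper: the paper's argument (given in the discussion preceding the proposition) is precisely that $G$ is a residually finite-$p$ Turner group, that for odd $p$ the relator $x_1^2\cdots x_n^2$ is primitive in $F(x_1,\ldots,x_n)$ so that $\widehat{G}_p$ is free pro-$p$ of rank $n-1$, and then an application of Proposition~\ref{pro-p - discrete}. Your Frattini-quotient computation showing that any $n-1$ of the generators form a basis of $\widehat{G}_p$ is a detail the paper leaves implicit, and you supply it correctly.
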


\ackn 
This work was carried out while the second author was holding a CNPq Postdoctoral Fellowship at the Federal University of Rio de Janeiro. He would like to thank CNPq for the financial support and the Department of Mathematics at UFRJ  for its warm hospitality and the excellent research environment.

\bibliographystyle{plain}

\end{document}